\documentclass[article]{amsart}

\usepackage{amssymb,amsfonts,amsmath,amsthm}
\usepackage[all,arc]{xy}
\usepackage{enumerate}
\usepackage{mathrsfs}
\usepackage[toc,page]{appendix}
\usepackage[left=3cm, right=3cm, bottom=3cm]{geometry}
\usepackage{graphicx}
\usepackage{tabularx}
\usepackage{url}
\usepackage{color}
\usepackage{tikz-cd}  
\usepackage{mathdots} 
\usepackage{romannum} 
\usepackage{hyperref} 

\newtheorem{thm}{Theorem}[section]

\newtheorem*{thm*}{Theorem}
\newtheorem*{cor*}{Corollary}
\newtheorem*{prop*}{Proposition}
\newtheorem{cor}[thm]{Corollary}
\newtheorem{prop}[thm]{Proposition}
\newtheorem{lem}[thm]{Lemma}

\theoremstyle{definition}
\newtheorem{defn}[thm]{Definition}

\newtheorem{exmp}[thm]{Example}

\newtheorem*{notn*}{Notation}

\theoremstyle{remark}
\newtheorem{rem}[thm]{Remark}

\newtheorem*{idea*}{Idea}

\newcommand{\Spec}{{\rm Spec}}

\makeatletter
\let\c@equation\c@thm
\makeatother
\numberwithin{thm}{section}
\numberwithin{equation}{section}

\bibliographystyle{plain}

\title[Logahoric Higgs torsors for a complex reductive group]{Logahoric Higgs torsors for a complex reductive group}

\author{Georgios Kydonakis, Hao Sun and Lutian Zhao}

\begin{document}
\pagenumbering{arabic}
\maketitle
\begin{abstract}
	In this article, a logahoric Higgs torsor is defined as a parahoric torsor with a logarithmic Higgs field. For a connected complex reductive group $G$, we introduce a notion of stability for logahoric $\mathcal{G}_{\boldsymbol\theta}$-Higgs torsors on a smooth algebraic curve $X$, where $\mathcal{G}_{\boldsymbol\theta}$ is a parahoric group scheme on $X$. In the case when the group $G$ is the general linear group ${\rm GL}_n$, we show that the stability condition of a parahoric torsor is equivalent to the stability of a parabolic bundle. A correspondence between semistable logahoric $\mathcal{G}_{\boldsymbol\theta}$-Higgs torsors and semistable equivariant logarithmic $G$-Higgs bundles allows us to construct the moduli space explicitly. This moduli space is shown to be equipped with an algebraic Poisson structure.
\end{abstract}

\flushbottom



\renewcommand{\thefootnote}{\fnsymbol{footnote}}
\footnotetext[1]{Key words: parahoric group scheme, logahoric Higgs torsor, equivariant Higgs bundle, stability, Poisson structure}
\footnotetext[2]{MSC2020 Class: 14D23, 32Q26 (Primary), 14L15, 53D17 (Secondary)}

\vspace{3mm}
\begin{flushright}
	\textit{To the memory of Professor M. S. Narasimhan}
\end{flushright}
\vspace{3mm}

\section{Introduction}

For a compact Riemann surface $X$ of genus $g \ge 2$, stable vector bundles on $X$ of fixed rank and degree can be characterized in terms of irreducible unitary representations of a certain discrete group. This is the content of the theorem of Narasimhan and Seshadri \cite{NS}, which provides a correspondence between such stable bundles and irreducible representations at the level of moduli spaces.

In search of a natural generalization of this remarkable result to the case when $X$ is noncompact, Seshadri introduced in \cite{Seshadri} an additional layer of structure on the bundles over a smooth irreducible projective curve, which he called a \emph{parabolic structure}, inspired by the work of Weil on logarithmic connections with regular singularity at finitely many points \cite[\S 2, Chapter I]{Weil}. This notion involved the choice of a weighted flag on the fiber over each point from a finite collection of points on the curve. The new objects were called by Seshadri \emph{parabolic bundles} and a stability condition in terms of a parabolic bundle degree was introduced analogously to the considerations of Mumford in the absence of the parabolic structure; for this notion of stability, the Narasimhan--Seshadri correspondence was subsequently established by Mehta and Seshadri \cite{MS} in this open-curve context, involving fundamental group representations into the group $G= \text{U}(n)$.

The next important step was to extend this correspondence for compact, as well as for non-compact groups $G$. The case when $G= {\rm GL}_n(\mathbb{C})$ carried out by Simpson \cite{Simp} was a landmark in this direction and involved the introduction of filtered objects to clarify the correct version of the bijective correspondence; in particular, stable filtered regular Higgs bundles and stable filtered local systems.

The main objective in the present article is to introduce a notion of stability for Higgs pairs and then construct a Dolbeault moduli space using this stability condition for general complex reductive groups $G$, in which moduli space the Higgs pairs generalize the stable filtered regular Higgs bundles of Simpson in the parabolic setting. The language to be used will be that of \emph{parahoric group schemes} in the sense of Bruhat--Tits \cite{BT1, BT2} for the notion of parahoric weight introduced by Boalch \cite{Bo}. This moduli space will be moreover shown to be algebraic Poisson. Before we explain our main considerations leading to the definition of these stable Higgs pairs for the general groups $G$, it is instructive to review a number of approaches in the literature followed for this problem. Several ideas from these approaches have been adapted in our work.

\subsection{Background} In generalizing the notion of a stable parabolic vector bundle to the setting of principal $G$-bundles for semisimple or reductive structure groups $G$, a central problem that soon became apparent was to introduce the correct notion of a parabolic weight in order to get a moduli space and a bijective correspondence, which would coincide with the ones of Simpson when $G= \text{GL}_n(\mathbb{C} )$. Early attempts in this direction were using a rather insufficient definition for a parabolic $G$-bundle; see for instance \cite{Hit90} or \cite{LS}. In \cite{BBN}, Balaji, Biswas and Nagaraj looked at principal bundles from a Tannakian perspective \cite{DM}, following the description given by Nori \cite{No1, No2}. In this sense, principal $G$-bundles are interpreted as functors from the category of locally free coherent sheaves, and a functor in the parabolic context serves as the right definition that respects the tensor product operation. Even though this definition coincides with the one of Seshadri when $G= \text{GL}_n(\mathbb{C} )$, it became clear that to a representation of a Fuchsian group into the maximal compact subgroup of $G$ with fixed holonomy around each puncture will not correspond a principal $G$-bundle in general.

An alternative approach by Teleman and Woodward \cite{TeWo} involved switching the order of embedding the group $G$ in $\text{GL}_n(\mathbb{C} )$ and applying the equivalence with equivariant bundles. The weights in this case, called markings, were defined to lie in a Weyl alcove for the corresponding Lie algebra; this meant though, that one should restrict to a subclass of parabolic $G$-bundles in order to establish an analog of the Mehta-Seshadri correspondence.

In this same line of an approach, Biquard, Garc\'{i}a-Prada and Mundet i Riera \cite{BGM} defined a notion of weight for parabolic principal $G$-bundles and proved a correspondence in the case of a real reductive group (also including the complex group cases) by using a Donaldson functional and the existence of harmonic reductions in this setting. The notion of weight in \cite{BGM} involves a choice, for each point in the reduced effective divisor $D \subset X$, of an element in a Weyl alcove $\mathcal{A}$ of the Lie algebra of a fixed maximal torus in a fixed maximal compact subgroup of a non-compact reductive Lie group, with the closure of $\mathcal{A}$ containing 0. The authors allow these elements to lie in a wall of the Weyl alcove, in order to establish the correspondence with parabolic $G$-connections having arbitrary fixed holonomy around the points in $D$. The side-effect of this explicit approach is that under this definition for a parabolic principal $G$-bundle, to a connection corresponds not a single holomorphic bundle, but rather a class of holomorphic bundles equivalent under gauge transformations with meromorphic singularities.

This defect started to become clear through the work of Boalch, who first defined in \cite{Bo} the notion of weight from the point of view of \emph{parahoric torsors} instead. Parahoric torsors were introduced by Pappas and Rapoport in \cite{PR1}, and locally these are described as parahoric subgroups of a formal loop group in the sense of Bruhat and Tits \cite{BT1, BT2}. Several conjectures concerning the moduli space of parahoric torsors were made by Pappas and Rapoport in \cite{PR2}, most of which have been verified by Heinloth in \cite{He}, thus generalizing corresponding results by Drinfeld and Simpson \cite{DS} for the moduli stack of principal bundles on a smooth projective curve over an arbitrary field. Then, a weight for a parahoric torsor is a point of the corresponding Bruhat--Tits building in the facet corresponding to the parahoric subgroup of a formal loop group (Definition 1, p. 46 in \cite{Bo}).

In an independent work but still in this approach, Balaji and Seshadri \cite{BS} introduced a notion of stability for parahoric torsors for a collection of weights chosen from the set of rational one-parameter subgroups of the group $G$, for $G$ semisimple and simply connected over $\mathbb{C}$. In this case, parahoric torsors on a smooth complex projective curve $X$ of genus $g \ge 2$ are indeed the correct intrinsically defined objects on $X$ associated to a $\left( \pi ,G \right)$-bundle on the upper half plane $\mathbb{H}$, where $\pi $ is the subgroup of the discontinuous group of automorphisms of $\mathbb{H}$, such that $X={\mathbb{H}}/{\pi }$. Balaji and Seshadri moreover constructed in \textit{loc. cit.} a moduli space for this notion of stability of parahoric torsors on $X$ and proved, under the assumption that the weights are rational, an analogue of the Mehta--Seshadri correspondence in this context, that is, for the case $G= \text{U}(n)$. Subsequently, Balaji, Biswas and Pandey extended in \cite{BBP} the correspondence to the case of real weights using a definition of stability that covers real weights as well.

\subsection{Results}
In this article we introduce a stability condition for logahoric Higgs torsors (also parahoric torsors) and construct the moduli space of (semi)stable logahoric Higgs torsors for general complex reductive groups $G$. We give in the sequel more details about the precise statements.

Let $X$ be a smooth algebraic curve with a reduced effective divisor $D$. Denote by $K_X$ the canonical line bundle on $X$. Let $G$ be a connected complex reductive group. Fixing a maximal torus $T$ in $G$, we equip each point $x \in D$ with a rational weight $\theta_x \in Y(T) \otimes_{\mathbb{Z}} \mathbb{Q}$, where $Y(T)$ is the group of one-parameter subgroups of $T$. Denote by ${\boldsymbol\theta}:=\{\theta_x, x \in D\}$ the collection of weights over the points in $D$. A parahoric (Bruhat-Tits) group scheme $\mathcal{G}_{\boldsymbol\theta}$ is then defined by gluing local parahoric group schemes for formal disks around each point $x \in D$ (see Definition \ref{202}). We then define the following pairs as the Higgs version of Boalch's ``\emph{tame parahoric connections}'' (\cite[\S 2.3]{Bo}):
\begin{defn}
A \emph{logahoric $\mathcal{G}_{\boldsymbol\theta}$-Higgs torsor} on $X$ is defined as a pair $(E,\varphi)$, where
\begin{itemize}
	\item $E$ is a parahoric $\mathcal{G}_{\boldsymbol\theta}$-torsor on $X$;
	\item $\varphi \in H^0(X, E(\mathfrak{g}) \otimes K_X(D))$ is a section, where $E(\mathfrak{g})$ is the adjoint bundle of $E$.
\end{itemize}
The section $\varphi$ is called a \emph{logarithmic Higgs field}. 
\end{defn}

Note that the definition of logahoric Higgs torsors is a slightly modified version of Yun's definition \cite[\S 4.3]{Yun}, where the Higgs field $\varphi$ is considered as a section of $E(\mathfrak{g})(D)$. Moreover, Baraglia, Kamgarpour and Varma in \cite{BKV} use a similar definition for a parahoric Higgs bundle for a semisimple and simply connected Lie group $G$, where the Higgs field is considered as an element in $H^0(X, E(\mathfrak{g})^{*} \otimes K_X)$ with nilpotent residue, where $E(\mathfrak{g})^{*}$ denotes the dual bundle.  The reason why we choose to work with $K_X(D)$ instead of $L=\mathcal{O}(D)$ is that Higgs bundles are degenerate connections, and one can define meromorphic connections, that is, corresponding to $K_X(D)$ with $D \ge 0$  (but one cannot define``$L$-twisted connections" for arbitrary line bundles $L$); therefore this seems to be a right setup towards establishing a non-abelian Hodge correspondence in this logahoric setting for a connected complex reductive group.

Balaji and Seshadri showed in \cite{BS} that parahoric $\mathcal{G}_{\boldsymbol\theta}$-torsors on $X$ are equivalent to $\Gamma$-equivariant $G$-principal bundles (called \emph{$(\Gamma,G)$-bundles} in this article) on $Y$, where $Y \rightarrow X$ is a Galois covering with Galois group $\Gamma$. Based on this work, we generalize the correspondence to Higgs bundles:
\begin{thm}[Theorem \ref{305}]
	Let $\mathcal{M}_H(X,\mathcal{G}_{\boldsymbol\theta})$ be the stack of logahoric $\mathcal{G}_{\boldsymbol\theta}$-Higgs torsors, and let $\mathcal{M}^{\boldsymbol\rho}_H(Y,\Gamma,G)$ be the stack of $\Gamma$-equivariant logarithmic $G$-Higgs bundles of type ${\boldsymbol\rho}$, where ${\boldsymbol\rho}$ is a fixed set of representations $\{\rho_y: \Gamma_y \rightarrow T, y \in R\}$, for a set of points $R$ of $Y$. Then we have an isomorphism
	\begin{align*}
	\mathcal{M}^{\boldsymbol\rho}_H(Y,\Gamma,G) \cong \mathcal{M}_H(X,\mathcal{G}_{\boldsymbol\theta})
	\end{align*}
	as algebraic stacks.
\end{thm}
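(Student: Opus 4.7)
The plan is to leverage the Balaji--Seshadri torsor-level correspondence between parahoric $\mathcal{G}_{\boldsymbol\theta}$-torsors and $(\Gamma,G)$-bundles of type ${\boldsymbol\rho}$ and enhance it to the Higgs setting by transporting the Higgs data along the Galois cover $\pi : Y \to X$. In one direction, given a parahoric Higgs torsor $(E,\varphi)$ on $X$, I would apply the Balaji--Seshadri equivalence to produce a $(\Gamma,G)$-bundle $\widetilde{E}$ on $Y$ and identify $\varphi$ with a $\Gamma$-equivariant section $\widetilde\varphi$ of $\mathrm{Ad}(\widetilde E)\otimes K_Y$. In the other direction, given $(\widetilde E,\widetilde\varphi)$, the parahoric torsor is again furnished by Balaji--Seshadri and the Higgs field descends to $X$ via invariant pushforward $(\pi_*\widetilde\varphi)^\Gamma$.

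The technical core is a natural isomorphism of coherent $\mathcal O_X$-modules
\[
\mathrm{Ad}(E)\otimes K_X(D)\;\cong\;\bigl(\pi_*\bigl(\mathrm{Ad}(\widetilde E)\otimes K_Y\bigr)\bigr)^{\Gamma}.
\]
Over $X\smallsetminus D$ the cover is \'etale, so the identification is tautological. At a point $x\in D$ with preimage $y\in R$ and ramification index $e_y$, I would pick local uniformizers $z$ on $X$ and $u$ on $Y$ with $z=u^{e_y}$, decompose $\mathfrak g$ into $T$-weight spaces $\mathfrak g_\alpha$, and note that the cyclic generator of $\Gamma_y$ acts on $\mathfrak g_\alpha$ through the phase determined by $\rho_y$ and thus by $\langle\alpha,\theta_x\rangle$. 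A $\Gamma_y$-invariant local section of $\mathrm{Ad}(\widetilde E)\otimes K_Y$ then has the form $u^{k_\alpha}f(u^{e_y})\, v_\alpha\otimes du$ with $k_\alpha$ fixed modulo $e_y$ by the equivariance condition, and matching this with the Moy--Prasad filtration $\mathfrak g_{\theta_x}(\mathcal O_x)$ that defines the parahoric group scheme (in the sense of Boalch) shows that after tensoring with $K_{X,x}(x)$ the two sides are identified. The simple pole allowed by the twist $K_X(D)$ on the $X$-side is exactly what compensates for the Riemann--Hurwitz identity $\pi^* K_X(D)\cong K_Y(R)$ on the $Y$-side, together with the shift in pole/zero orders coming from the weights.

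Once this sheaf isomorphism is in place, passing to global sections yields a bijection on Higgs fields, and the two functors constructed above are manifestly quasi-inverse on objects. Upgrading to an isomorphism of stacks is then essentially formal: both pullback along $\pi$ and the $\Gamma$-invariant pushforward commute with arbitrary base change on the parameter scheme, so the assignments are functorial in families and the Balaji--Seshadri equivalence already establishes the isomorphism on the underlying bundle stacks, with the Higgs part fibered over it. The main obstacle I expect is the local Moy--Prasad bookkeeping at the ramified points: checking, root space by root space, that the parahoric filtration determined by the rational weight $\theta_x$ matches the order-of-vanishing condition imposed by $\Gamma_y$-invariance after the $K_Y$-twist, uniformly across all weights including those on walls of the Weyl alcove and after lifting $\theta_x$ to a character of $\Gamma_y$ in a choice-independent way.
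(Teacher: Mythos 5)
Your overall strategy is the same as the paper's: invoke the Balaji--Seshadri equivalence on the underlying bundle stacks, observe that both Higgs stacks are fibered over them via forgetful maps, and reduce to an identification of the fibers, i.e.\ of the spaces of Higgs fields, which is checked locally at the ramification points root space by root space after conjugating by $\Delta=w^{d\theta}$. So the architecture is fine.

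There is, however, a genuine error in your central claim. You take the equivariant Higgs field to be a section of $\mathrm{Ad}(\widetilde E)\otimes K_Y$ and assert
\begin{align*}
\mathrm{Ad}(E)\otimes K_X(D)\;\cong\;\bigl(\pi_*\bigl(\mathrm{Ad}(\widetilde E)\otimes K_Y\bigr)\bigr)^{\Gamma},
\end{align*}
but the twist on the cover must be $K_Y(R)$, not $K_Y$ (this is also how the stack $\mathcal{M}^{\boldsymbol\rho}_H(Y,\Gamma,G)$ is defined in the paper, Definition \ref{303}). The discrepancy is not cosmetic: on the subspace of $\mathfrak g$ on which $\rho_y(\Gamma_y)$ acts trivially --- in particular on all of $\mathfrak t$ --- equivariance is just invariance of the $1$-form-valued section, and a $\Gamma_y$-invariant holomorphic section $f(u)\,du$ with $z=u^{e_y}$ descends to a \emph{holomorphic} section of $K_X$ near $x$ (the lowest admissible term is $u^{e_y-1}du=\tfrac{1}{e_y}dz$). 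Hence with your normalization the descended Higgs field can never have a nonzero Cartan-valued residue at $x$, whereas $H^0(X,\mathrm{Ad}(E)\otimes K_X(D))$ allows such residues (cf.\ Example \ref{exmp para Higgs}, where the diagonal entries are $A\,\tfrac{dz}{z}$). So your map on Higgs fields is injective but not surjective, and the two stacks you compare are not isomorphic; you would be proving the statement for a strictly smaller substack. Your appeal to Riemann--Hurwitz actually points to the fix rather than supplying it: since $\pi^*K_X(D)\cong K_Y(R)$, the pole $\tfrac{dz}{z}$ downstairs corresponds to the pole $\tfrac{dw}{w}$ upstairs, and the correct local statement (the one the paper proves in \S 3.3) is
\begin{align*}
H^0\bigl(\mathbb{D}_y,\mathrm{Ad}(F)\otimes\Omega^1_{\mathbb{D}_y}(y)\bigr)^{\Gamma}\;\cong\;H^0\bigl(\mathbb{D}_x,\mathrm{Ad}(E)\otimes\Omega^1_{\mathbb{D}_x}(x)\bigr),
\end{align*}
obtained by writing $\phi\in\mathfrak g(\mathbb{C}[[w]])\cdot\tfrac{dw}{w}$, setting $\varphi=\Delta^{-1}\phi\Delta$ and checking $\varphi(z)_r=\phi(z)_rz^{-r(\theta)}\in\mathfrak u_r(z^{m_r(\theta)}\mathbb{C}[[z]])\otimes\tfrac{dz}{z}$. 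With that correction the rest of your argument, including the passage to families, goes through as in the paper.
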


The notion of stability that we introduce for the logahoric Higgs torsors described above, is inspired by the works of Ramanathan (\cite{Rama1975,Rama19961,Rama19962}) on the construction of moduli spaces of semistable principal $G$-bundles on a smooth projective irreducible complex curve. For an appropriate notion of a \emph{parahoric degree} of a parahoric $\mathcal{G}_{\boldsymbol\theta}$-torsor $E$ (see Definition \ref{401}), the definition of this Ramanathan-stability is the following:
\begin{defn}[Definition \ref{403}]
	A parahoric $\mathcal{G}_{\boldsymbol\theta}$-torsor $E$ is called \emph{$R$-stable} (resp. \emph{$R$-semistable}), if for
	\begin{itemize}
		\item any proper parabolic group $P \subseteq G$,
		\item any reduction of structure group $\varsigma: X \rightarrow E/\mathcal{P}_{\boldsymbol\theta}$ ($\mathcal{P}_{\boldsymbol\theta}$ is a parahoric group constructed from $P$),
		\item any nontrivial anti-dominant character $\chi: \mathcal{P}_{\boldsymbol\theta} \rightarrow \mathbb{G}_m$, which is trivial on the center of $\mathcal{P}_{\boldsymbol\theta}$,
	\end{itemize}
	one has
	\begin{align*}
	parh\deg E(\varsigma,\chi) > 0, \quad (\text{resp. } \geq 0).
	\end{align*}
\end{defn}
\noindent An important remark to make here is that when one considers small weights, this definition coincides with the one of Balaji and Seshadri in \cite[\S 6]{BS}, and parahoric $\mathcal{G}_{\boldsymbol\theta}$-torsors in this case are precisely parabolic bundles. The notion of Ramanathan-stability for a logahoric $\mathcal{G}_{\boldsymbol\theta}$-Higgs torsor $(E,\varphi)$ now assumes the compatibility of the Higgs field $\varphi$ (Definition \ref{405}). With respect to the correspondence between logahoric Higgs torsors and equivariant logarithmic Higgs bundles, we prove that this correspondence also holds under stability conditions.

\begin{thm}[Theorem \ref{416}]
	Let $(E,\varphi)$ be a logahoric $\mathcal{G}_{\boldsymbol\theta}$-Higgs torsor on $X$, and let $(F,\phi)$ be the corresponding $\Gamma$-equivariant logarithmic $G$-Higgs bundle on $Y$. Then, $(E,\varphi)$ is $R$-stable (resp. $R$-semistable) if and only if $(F,\phi)$ is $R$-stable (resp. $R$-semistable).
\end{thm}
\noindent This is also the key property to construct the moduli space of logahoric $\mathcal{G}_{\boldsymbol\theta}$-Higgs torsors, as it implies that constructing this moduli space is equivalent to rather constructing the moduli space of $R$-stable (or $R$-semistable) $\Gamma$-equivariant logarithmic $G$-Higgs bundles.

In \S\ref{sect_R_mu_stab}, we study further the $R$-stability condition introduced above. Since we work on general complex reductive groups, the center is generally non-trivial. In the definition of $R$-stability, we require that the anti-dominant character acts trivially on the center (see \S\ref{sect_stab_cond}). However, this condition weakened, in particular not requiring that the anti-dominant character is acting trivially on the center, provides a modified notion of stability which we call \emph{$R_\mu$-stability} (Definitions \ref{504} and \ref{Ralpha}). In the case when the group $G$ is  ${\rm GL} (n, \mathbb{C} )$ and for an appropriate choice of $\mu$, we show that this $R_\mu$-stability condition is equivalent to the stability condition for a parabolic Higgs bundle as considered by Simpson in \cite{Simp}. Furthermore, we prove the following relation between $R$-stability and $R_\mu$-stability:

\begin{prop}[Proposition \ref{alphaexst}]
	Let $E$ be a parahoric $\mathcal{G}_{\boldsymbol\theta}$-torsor. There exists a canonical choice of $\mu\in \mathfrak{z}$, where $\mathfrak{z}$ is the center of $\mathfrak{t}$, depending on the topological type of $E$, such that $E$ is $R$-stable (resp. $R$-semistable) if and only if $E$ is $R_\mu$-stable (resp. $R_\mu$-semistable).
\end{prop}

\noindent The element $\mu$ is regarded as a topological invariant of parahoric $\mathcal{G}_{\boldsymbol\theta}$-torsors. We say that a parahoric $\mathcal{G}_{\boldsymbol\theta}$-torsor is of \emph{type $\mu$}, if $\mu$ is given from the above proposition. 

\begin{thm}[Theorem \ref{701}]
	There exists a quasi-projective scheme $\mathfrak{M}^{Rss}_H(X,\mathcal{G}_{\boldsymbol\theta},\mu)$ as the moduli space for the moduli problem $\mathcal{M}^{Rss}_H(X,\mathcal{G}_{\boldsymbol\theta},\mu)$ of $R$-semistable logahoric $\mathcal{G}_{\boldsymbol\theta}$-Higgs torsors, and the geometric points of $\mathfrak{M}^{Rss}_H(X,\mathcal{G}_{\boldsymbol\theta},\mu)$ represent $S$-equivalence classes of $R$-semistable logahoric $\mathcal{G}_{\boldsymbol\theta}$-Higgs torsors of type $\mu$. Furthermore, there is an open subset $\mathfrak{M}^{Rs}_H(X,\mathcal{G}_{\boldsymbol\theta},\mu) \subseteq \mathfrak{M}^{Rss}_H(X,\mathcal{G}_{\boldsymbol\theta},\mu)$ parameterizing isomorphism classes of $R$-stable logahoric $\mathcal{G}_{\boldsymbol\theta}$-Higgs torsors of type $\mu$.
\end{thm}

In fact, one gets the same moduli space when considering irrational weights. For any irrational weight there is an element in the rational apartment such that for every quasi-parahoric torsor the (semi)stability conditions coincide (Remark \ref{irrational}).

We finally show that the moduli space of $R$-semistable logahoric Higgs torsors is equipped with an algebraic Poisson structure (Proposition \ref{801}). This follows the strategy developed in \cite{KSZ4} and involves the construction of an Atiyah sequence inducing a Lie algebroid structure on the tangent space of the moduli space of $R$-stable $\Gamma$-equivariant logarithmic $G$-Higgs bundles. 

\subsection{Applications}
We close this introduction with a discussion about the possible applications and further directions in which the considerations of this article may evolve. In this article, we construct the moduli space of logahoric Higgs torsors with respect to a given complex reductive group. For the case when the group $G$ is a real reductive group, an analogous approach as in \cite{BGM} but for the algebraic moduli space can be used in order to obtain the construction of the moduli space also in this case. Also, this approach can be applied to construct the moduli space of logahoric connections (see Remark \ref{moduli of local system}), which is a special case of a $\Lambda$-module (see \cite{Simp2} or Definition \ref{703}).


Secondly, as has already been pointed out in \cite[\S 6]{Bo}, the Dolbeault moduli space of semistable logahoric Higgs torsors for general complex reductive groups is expected to provide the correct setup in order to establish a bijective correspondence extending the correspondence of Simpson \cite{Simp}, and the existence of this moduli space is given in this paper. Furthermore, in \cite[\S 6]{Bo} the author includes a table describing the correspondence of the parameters involved in the correspondence, namely the parahoric weights and the eigenvalues of the Higgs field on the one hand, and the weights and monodromy of a logarithmic connection on the other hand. The Riemann--Hilbert correspondence for logahoric connections established in \cite{Bo} provides the description of the corresponding Betti data as the $G$-version of the ``\emph{$\mathbb{R}$-filtered local systems}'' in Simpson's work for the $\text{GL}_n(\mathbb{C})$-case \cite{Simp}; we refer the reader to \cite[Remark 2]{Bo} for this description, as well as to \cite[\S 6]{BGM} when referring to the parabolic situation. Therefore, we believe that the moduli space of semistable logahoric Higgs torsors constructed here is the correct choice in order to establish the \emph{tame parahoric nonabelian Hodge correspondence} for general complex reductive groups.

We moreover expect that the logahoric case treated in this article can be used in order to construct algebraically the moduli space of logahoric Higgs torsors in the case of irregular singularities, thus referring to wild character varieties and the description of corresponding Stokes data (cf. \cite{Bo2,Bo3,Bo4}). This space has been constructed analytically in the case when $G=\text{GL}_n(\mathbb{C})$ in \cite{BiBo}, and a nonabelian Hodge correspondence for this moduli space was established combining results from \cite{BiBo} and \cite{Sabbah}. The construction of moduli spaces of logahoric Higgs torsors for arbitrary complex groups is important also from the point of view of understanding the tamely ramified geometric Langlands correspondence as proposed in the work of Gukov and Witten \cite{GukWit1, GukWit2}. Namely, it is argued that the category of A-branes is equivalent to the derived category of coherent sheaves on the moduli stack of parabolic $^{L}G$-local systems, while the category of B-branes is equivalent to the derived category of $D$-modules on the moduli stack of parabolic $G$-bundles.


\vspace{3mm}

\begin{notn*} Throughout the article, we will be distinguishing the notation between the parahoric Higgs bundles and equivariant Higgs bundles as follows:
	
	\vspace{2mm}
	
	\begin{tabularx}{\textwidth}{XXX}
		& Parahoric & Equivariant \\
		\hline
		Curve: & $X$ & $Y$ \\
		Local coordinate: & $z$ & $w$ \\
		Coordinate Ring: & $A$ & $B$ \\
		Function field: & $K$ & $L$ \\
		Torsor/Bundle: & $E$ & $F$ \\
		Higgs field: & $\varphi$ & $\phi$ \\
		Reduction of structure group: & $\varsigma$ & $\sigma$ \\
		Character: & $\kappa$ & $\chi$ \\
		\hline
	\end{tabularx}
	\vspace{2mm}
\end{notn*}

\section{Parahoric Torsors and Equivariant Bundles}

The notion of a parahoric subgroup is similar to that of a parabolic subgroup and can be described using the theory of affine buildings, also known as Bruhat--Tits buildings, originally developed by Bruhat and Tits in their series of articles \cite{BT1,BT2} (see also  \cite{Tits,Tits2,Weiss} for surveys on the structure of affine buildings). The word \textit{parahoric} is a blend word between the words ``parabolic'' and ``Iwahori''. An Iwahori subgroup is a subgroup of a reductive algebraic group over a non-archimedian local field, analogous to Borel subgroups of an algebraic group. The seminal work of Bruhat and Tits \textit{loc. cit.} is extending to the case of reductive algebraic groups over a local field the study of Iwahori and Matsumoto \cite{IM} on the Iwahori subgroups for the Chevalley groups over $p$-adic fields.

Parahoric group schemes $\mathcal{G}$ and parahoric $\mathcal{G}$-torsors on a smooth projective curve were introduced by Pappas and Rapoport in \cite{PR1}. The notion of weight for such torsors was first defined by Boalch in \cite{Bo}, while Balaji and Seshadri in \cite{BS} introduced a notion of stability for parahoric $\mathcal{G}$-torsors for a collection of small weights chosen from the set of rational one-parameter subgroups of the group $G$, assuming that $G$ is semisimple and simply connected.

In this section, we generalize the setup of Balaji and Seshadri. We set the definition for a parahoric torsor over a complex reductive group with a collection of (arbitrary) rational weights following \cite{BS} and \cite{Bo}, and see that parahoric torsors correspond to $\Gamma$-equivariant $G$-bundles, similarly to \cite{BS}.

\subsection{Parahoric Group Schemes and Parahoric Torsors}
Let $G$ be a connected complex reductive group. We fix a maximal torus $T$ in $G$. Let $X(T):={\rm Hom}(T,\mathbb{G}_m)$ be the character group and $Y(T):={\rm Hom}(\mathbb{G}_m,T)$ be the group of one-parameter subgroups of $T$. The group $Y(T)$ can be understood as a lattice of the Lie algebra $\mathfrak{t}$ of $T$. Let
\begin{align*}
\langle \cdot, \cdot \rangle: Y(T) \times X(T) \rightarrow \mathbb{Z}
\end{align*}
be the canonical pairing, that extends to $\mathbb{Q}$ by tensoring $Y(T)$ with $\mathbb{Q}$.

We will denote by $R$, the root system with respect to the maximal torus $T$. Given a root $r \in R$, there is a root homomorphism
\begin{align*}
{\rm Lie}(\mathbb{G}_a) \rightarrow ({\rm Lie}(G))_r.
\end{align*}
This isomorphism induces a natural homomorphism
\begin{align*}
u_r: \mathbb{G}_a \rightarrow G,
\end{align*}
such that $t u_r(a) t^{-1}= u_r(r(t)a)$ for $t \in T$ and $a \in \mathbb{G}_a$. Denote by $U_r$ the image of the homomorphism $u_r$, which is a closed subgroup.

A \emph{(rational) weight} $\theta$ is an element in $Y(T) \otimes_{\mathbb{Z}} \mathbb{Q}$. Under differentiation, we can consider $\theta$ as an element in $\mathfrak{t}$. Define the integer $m_r(\theta):=\lceil -r(\theta) \rceil$, where $\lceil \cdot \rceil$ is the ceiling function and $r(\theta):=\langle \theta, r \rangle$. We then introduce the following:

\begin{defn}\label{parahoric_subgroup}
	Let $A:=\mathbb{C}[[z]]$ and $K:=\mathbb{C}((z))$. With respect to the above data, we define the \emph{parahoric subgroup $G_{\theta}$} of $G(K)$ as
	\begin{align*}
	G_{\theta}:=\langle T(A), U_r(z^{m_r(\theta)}A), r \in R \rangle.
	\end{align*}
	Denote by $\mathcal{G}_{\theta}$ the corresponding group scheme of $G_\theta$, which is called the \emph{parahoric group scheme}.
\end{defn}

\begin{rem}
	An equivalent analytic definition of the parahoric group $G_\theta$ was given in \cite[\S 2.1]{Bo} as
	\begin{align*}
	G_{\theta}:=\{g \in G(K) \text{ }|\text{ } z^{\theta} g z^{-\theta} \text{ has a limit as $z \rightarrow 0$ along any ray}\},
	\end{align*}
	where $z^{\theta}:= {\rm exp}(\theta {\rm log}(z))$.
\end{rem}

A weight is called \emph{small}, if $r(\theta) < 1$ for all roots $r \in R$, and Balaji and Seshadri studied parahoric torsors in this situation in \cite{BS}. Furthermore, if we assume that $G$ is semisimple, given any weight $\theta$, the parahoric subgroup $G_\theta$ is conjugate to a parahoric subgroup $G_{\theta_0}$, where $\theta_0$ satisfies the condition that for any $r \in R$, $\theta_0(r) \leq 1$ and the conjugation is taken in $G(K)$ (see \cite{Tits2}, Section 3.1, p. 50). With respect to this property, in this paper, we work on weights $\theta$ such that for any $r \in R$, we have $\theta(r) \leq 1$.

\begin{rem}\label{201}
	Let $\{t_i\}$ be the generators of $Y(T)$. By abuse of notation, we regard $\{t_i\}$ as a basis of $\mathfrak{t}$. Then, a rational weight $\theta$ (regarded as the corresponding element in $\mathfrak{t}$) can be written as $\theta=\sum \frac{a_i}{d_i} t_i$, where $a_i$ and $d_i$ are integers. Denote by $d$ the least common multiple of $d_i$. We can assume that the denominators in the coefficients of $t_i$ are equal to $d$. This integer $d$ corresponds to the order of cyclic group $\Gamma$ when we discuss the correspondence in \S 2.3.
\end{rem}

The above construction is a local picture of parahoric group schemes. Now we will define the parahoric group schemes globally. Let $X$ be a smooth algebraic curve over $\mathbb{C}$, and we also fix a reduced effective divisor $D$ on $X$. In fact, the divisor $D$ is a sum of $s$ many distinct points. For each point $x \in D$, we equip it with a rational weight $\theta_x \in Y(T) \otimes_{\mathbb{Z}} \mathbb{Q}$. Denote by ${\boldsymbol\theta}:=\{\theta_x, x \in D\}$ the collection of weights over the points in $D$.

\begin{defn}\label{202}
	Let ${\boldsymbol\theta}$ be a collection of weights over $D$. We define a group scheme $\mathcal{G}_{\boldsymbol\theta}$ over $X$ by gluing the following local data
	\begin{align*}
	\mathcal{G}_{\boldsymbol\theta}|_{X\backslash D} \cong G \times (X\backslash D), \quad \mathcal{G}_{\boldsymbol\theta}|_{\mathbb{D}_x} \cong \mathcal{G}_{\theta_x}, x \in D,
	\end{align*}
	where $\mathbb{D}_x$ is a formal disc around $x$. This group scheme $\mathcal{G}_{\boldsymbol\theta}$ will be called a \emph{parahoric Bruhat--Tits group scheme}.
\end{defn}
By \cite[Lemma 3.18]{ChGP}, the group scheme $\mathcal{G}_{\boldsymbol\theta}$ defined above is a smooth affine group scheme of finite type, flat over $X$. We will denote by ${\rm Bun}(X,\mathcal{G}_{\boldsymbol\theta})$, the category of parahoric $\mathcal{G}_{\boldsymbol\theta}$-torsors $E$ on $X$.

\begin{lem}[Corollary 4.2.6 in \cite{Yun}]\label{203}
	The category of parahoric $\mathcal{G}_{\boldsymbol\theta}$-torsors ${\rm Bun}(X,\mathcal{G}_{\boldsymbol\theta})$ has a natural stack structure. More precisely, ${\rm Bun}(X,\mathcal{G}_{\boldsymbol\theta})$ is an algebraic stack locally of finite type.
\end{lem}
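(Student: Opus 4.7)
The plan is to apply directly the general algebraicity result \cite[Corollary 4.2.6]{Yun}, so the task essentially reduces to verifying that $\mathcal{G}_{\boldsymbol\theta}$ falls within its hypotheses and sketching why the standard descent and uniformization arguments work in this parahoric setting. The paragraph immediately preceding the lemma already records, via \cite[Lemma 3.18]{ChGP}, that $\mathcal{G}_{\boldsymbol\theta}$ is smooth, affine, of finite type, and flat over $X$; these are exactly the ingredients needed. Smoothness guarantees that $\mathcal{G}_{\boldsymbol\theta}$-torsors are locally trivial in the \'etale topology, while affineness and flatness make the glueing over $X$ of the local models $\mathcal{G}_{\theta_x}$ given in Definition \ref{202} behave well in families.

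First I would check that $\mathrm{Bun}(X,\mathcal{G}_{\boldsymbol\theta})$ is a stack in groupoids. For a $\mathbb{C}$-scheme $S$, define an object over $S$ to be a $\mathcal{G}_{\boldsymbol\theta}\times_{\mathrm{Spec}\,\mathbb{C}} S$-torsor on $X\times S$; morphisms are $\mathcal{G}_{\boldsymbol\theta}$-equivariant isomorphisms. Since $\mathcal{G}_{\boldsymbol\theta}$ is affine, torsors and their morphisms satisfy effective descent in the fppf (hence \'etale) topology, so this fibered category is a stack. This part is formal and does not use anything specific to parahoric groups.

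Second, and this is the real content, one must show algebraicity and local finite-typeness. The strategy is uniformization: over $X\setminus D$ a $\mathcal{G}_{\boldsymbol\theta}$-torsor is a usual $G$-bundle, and the additional data at each $x\in D$ is controlled by a reduction within the twisted affine Grassmannian $\mathrm{Gr}_{\mathcal{G}_{\theta_x}}$. Choosing a suitable covering of $X$ by $X\setminus D$ together with formal discs $\mathbb{D}_x$ and passing to trivializations produces an atlas, and the resulting transition groupoid is a smooth ind-scheme acting on an ind-scheme of local models. The crucial input that such a presentation actually realizes $\mathrm{Bun}(X,\mathcal{G}_{\boldsymbol\theta})$ as an algebraic stack locally of finite type is Heinloth's theorem \cite{He} (generalizing the Drinfeld--Simpson uniformization \cite{DS}), which is verified for smooth affine flat group schemes over curves, including all parahoric Bruhat--Tits group schemes; this is what Yun's Corollary 4.2.6 packages.

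The main obstacle is thus not the stackiness, which is routine descent, but the algebraicity at parahoric points where the local group schemes $\mathcal{G}_{\theta_x}$ differ from $G\times \mathbb{D}_x$; this is handled precisely by invoking Heinloth's uniformization, which is valid for our $\mathcal{G}_{\boldsymbol\theta}$ thanks to the smoothness, affineness, flatness, and finite-typeness already established. With these inputs in place the lemma follows immediately from \cite[Corollary 4.2.6]{Yun}.
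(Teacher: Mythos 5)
Your proposal is correct and matches the paper's treatment: the paper gives no independent proof, but simply records via \cite[Lemma 3.18]{ChGP} that $\mathcal{G}_{\boldsymbol\theta}$ is a smooth affine flat group scheme of finite type over $X$ and then cites \cite[Corollary 4.2.6]{Yun} (which rests on Heinloth's uniformization) for algebraicity and local finite-typeness. Your added sketch of the descent and uniformization steps is a faithful unpacking of that citation rather than a different route.
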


\subsection{Equivariant Bundles}\label{subsect_equi_bundle}
Let $\Gamma$ be a cyclic group of order $d$ with generator $\gamma$, and let $G$ be a connected complex reductive group. Define $B:=\mathbb{C}[[w]]$ and $L:=\mathbb{C}((w))$. There is a natural $\Gamma$-action on $\mathbb{D}:=\Spec(B)$, such that $\gamma w=\xi w$, where $\xi$ is a $d$-th root of unity. We first study the local picture of a $(\Gamma,G)$-bundle over $\mathbb{D}$.

\begin{defn}\label{204}
	A \emph{$\Gamma$-equivariant $G$-bundle} over $\mathbb{D}$ is a $G$-bundle together with a lift of the action of $\Gamma$ on the total space of $F$. A $\Gamma$-equivariant $G$-bundle is also called a \emph{$(\Gamma,G)$-bundle}.
\end{defn}

Since we work on an affine chart, a $G$-bundle $F$ has the property that
\begin{align*}
F(\mathbb{D}) \cong G(B).
\end{align*}
Therefore, a $(\Gamma,G)$-bundle $F$ is equivalent to a representation $\rho: \Gamma \rightarrow G$. Note that $\Gamma$ is a cyclic group. We can suppose that the representation $\rho$ factors through $T$ under a suitable conjugation. Then, a representation $\rho: \Gamma \rightarrow T$ corresponds to an element in $Y(T)$ with order $d$, that is,
\begin{align*}
{\rm Hom}(\Gamma, T) \cong {\rm Hom}(X(T),X(\Gamma))={\rm Hom}(X(T),\mathbb{Z}/d\mathbb{Z})=Y(T)/d\cdot Y(T).
\end{align*}
Therefore, a weight $\theta \in Y(T) \otimes_{\mathbb{Z}} \mathbb{Q}$ uniquely determines a representation $\rho: \Gamma \rightarrow T$. 

Let $Y$ be a smooth algebraic curve over $\mathbb{C}$ equipped with a $\Gamma$-action.
\begin{defn}\label{205}
	A \emph{$(\Gamma,G)$-bundle} on $Y$ is a $G$-bundle $F$ together with a lift of the action of $\Gamma$ on the total space of $F$, which preserves the action of the group $G$.
\end{defn}
We now return to the local picture of a $(\Gamma,G)$-bundle on $Y$. Given $y \in Y$, let $\Gamma_y$ be the stabilizer group of the point $y$. Denote by $R$ the set of points in $Y$, of which the stabilizer groups are nontrivial. As was discussed above, the $\Gamma$-action around $y \in R$ is given by a representation $\rho_y: \Gamma_y \rightarrow T$,  such that
\begin{align*}
\gamma \cdot (u,g) \rightarrow (\gamma u, \rho_y(\gamma) g), \quad u \in \mathbb{D}_y, \gamma \in \Gamma_y,
\end{align*}
where $\mathbb{D}_y$ is a $\Gamma$-invariant formal disc around $y$.

\begin{defn}\label{equivariant_bundle_of_type} We say that a $(\Gamma,G)$-bundle $F$ is \emph{of type ${\boldsymbol\rho}=\{\rho_y, y \in R\}$}, if the representation for each $y \in R$ is given by $\rho_y$. Denote by ${\rm Bun}^{\boldsymbol\rho}(Y,\Gamma,G)$ the category of $(\Gamma,G)$-bundles of type $\boldsymbol\rho$ on $Y$.
\end{defn}
A $(\Gamma,G)$-bundle $F$ on $Y$ can be also understood from gluing the following local data. For each $y \in R$, we define $F_y:= \mathbb{D}_y  \times G$, such that the $\Gamma_y$-action is defined as
\begin{align*}
\gamma \cdot (u,g) \rightarrow (\gamma u, \rho_y(\gamma)g), \quad u \in \mathbb{D}_y, \gamma \in \Gamma_y,
\end{align*}
and define $F_0:=(Y \backslash R) \times G$ with the $\Gamma_y$-structure
\begin{align*}
\gamma \cdot (u,g) \rightarrow (\gamma u, g), \quad u \in Y \backslash R, \gamma \in \Gamma_y.
\end{align*}
Therefore, a $(\Gamma,G)$-torsor $F$ being of type $\boldsymbol\rho$, is equivalent to giving $(\Gamma,G)$-isomorphisms
\begin{align*}
\Theta_y: F_y|_{\mathbb{D}^{\times}_y} \rightarrow F_0|_{\mathbb{D}^{\times}_y}, \quad y \in R.
\end{align*}
Note that given two transition functions $\Theta_1$ and $\Theta_2$, if there exist $(\Gamma,G)$-isomorphisms
\begin{align*}
\tau_y: F_y \rightarrow F_y, \quad \tau_0: F_0 \rightarrow F_0,
\end{align*}
such that $\Theta_1=\tau_0 \Theta_2 \tau_y$, then the corresponding $(\Gamma,G)$-bundles are isomorphic. The above observation gives us the following theoretic isomorphism \cite[Proposition 3.1.1]{BS}
\begin{align*}
{\rm Bun}^{\boldsymbol\rho}(Y,\Gamma,G) \cong [ \prod_{y \in R} G(B) \backslash \prod_{y \in R} G(L) / G(\mathbb{C}[Y \backslash R])  ]^{\Gamma}.
\end{align*}

On the other hand, given a reduced effective divisor $R \subseteq Y$ and a collection of cyclic groups $\{\Gamma_y| y \in R\}$, there is a canonical way to define a root stack $\mathcal{X}$ (see \cite{Cad} for more details). Therefore, $(\Gamma,G)$-bundles of type $\boldsymbol\rho$ on $Y$ are equivalent to $G$-bundles of type $\boldsymbol\rho$ on the corresponding root stack $\mathcal{X}$, in other words,
\begin{align*}
{\rm Bun}^{\boldsymbol\rho}(Y,\Gamma,G) \cong {\rm Bun}^{\boldsymbol\rho}(\mathcal{X},G).
\end{align*}
Since $\mathcal{X}$ is a Deligne--Mumford stack, ${\rm Bun}^{\boldsymbol\rho}(\mathcal{X},G)$ has a natural stack structure. Furthermore, it is an algebraic stack locally of finite type by Artin's theorem \cite{Art1974,Hall}. This gives the following lemma.
\begin{lem}\label{206}
	The stack ${\rm Bun}^{\boldsymbol\rho}(Y,\Gamma,G)$ is an algebraic stack locally of finite type.
\end{lem}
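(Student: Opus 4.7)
The plan is to invoke the stack isomorphism
\begin{align*}
{\rm Bun}^{\boldsymbol\rho}(Y,\Gamma,G) \cong {\rm Bun}^{\rho}(\mathcal{X},G)
\end{align*}
established immediately before the lemma, so that the problem reduces to verifying that the moduli stack of $G$-bundles on the root stack $\mathcal{X}$ is an algebraic stack locally of finite type. Thus I would work entirely on $\mathcal{X}$ from the outset and reserve the equivariant description for later use in the paper.

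First I would recall Cadman's construction \cite{Cad}: from the reduced effective divisor $R \subseteq Y$ together with the collection of cyclic groups $\{\Gamma_y\}_{y \in R}$ one obtains a smooth separated Deligne--Mumford stack $\mathcal{X}$ of finite type over $\mathbb{C}$, whose coarse moduli space is $Y$ and whose only non-trivial inertia lives over the points of $R$. The classifying stack $BG$ is a smooth algebraic stack of finite type over $\mathbb{C}$ with affine diagonal, since $G$ is a smooth affine algebraic group. Under the tautological identification, a $G$-torsor on $\mathcal{X}$ is the same thing as a morphism $\mathcal{X} \to BG$, so ${\rm Bun}(\mathcal{X},G)$ is exactly the Hom-stack $\underline{{\rm Hom}}(\mathcal{X},BG)$.

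At this point I would apply the algebraicity theorem for Hom-stacks due to Hall \cite{Hall} (a version of Artin's representability theorem \cite{Art1974}), whose hypotheses are satisfied: the source $\mathcal{X}$ is a Deligne--Mumford stack proper (if $Y$ is proper) or at least separated and of finite type over $\mathbb{C}$ with finite diagonal, and the target $BG$ is a noetherian algebraic stack with affine diagonal. The conclusion is that $\underline{{\rm Hom}}(\mathcal{X},BG)$ is an algebraic stack locally of finite type over $\mathbb{C}$. The type condition $\boldsymbol\rho$ fixes the local representation of the inertia at each $y \in R$; this is an open and closed condition, so ${\rm Bun}^{\rho}(\mathcal{X},G) \subseteq {\rm Bun}(\mathcal{X},G)$ is an open-closed substack and inherits the algebraic structure.

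The main obstacle I anticipate is checking the hypotheses of the Hom-stack algebraicity theorem in the non-proper situation: if $Y$ is not projective, $\mathcal{X}$ fails to be proper and one must appeal to the version of the result valid for a separated, finite-type Deligne--Mumford source and an algebraic target with affine (or at least quasi-affine) diagonal. The qualifier \emph{locally} of finite type is unavoidable because the topological type of a $G$-bundle on $\mathcal{X}$ is unbounded; boundedness is recovered only after further restrictions such as fixing a degree or a semistability condition, which is carried out later in the paper.
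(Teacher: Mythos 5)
Your proposal is correct and follows essentially the same route as the paper: both identify ${\rm Bun}^{\boldsymbol\rho}(Y,\Gamma,G)$ with $G$-bundles on the root stack $\mathcal{X}$ of Cadman and then invoke Artin's representability theorem in the form of Hall's algebraicity results to conclude that the stack is algebraic and locally of finite type. Your write-up merely makes explicit what the paper leaves implicit, namely the Hom-stack formulation $\underline{{\rm Hom}}(\mathcal{X},BG)$ and the fact that fixing the type $\boldsymbol\rho$ cuts out an open-closed substack.
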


\subsection{Correspondence}\label{subsect_corr_torsor_and_bundle}
We first work on local charts. Let $\Gamma$ be a cyclic group of order $d$ with generator $\gamma$, and there is a natural $\Gamma$-action on $B=\mathbb{C}[[w]]$ defined by rotation. Let $\theta$ be a weight with the common denominator $d$. Denote by $\rho$ the corresponding representation. We define 
\begin{align*}
\Delta(w):=w^{d \cdot \theta}.
\end{align*}
Clearly, we have
\begin{align*}
\Delta(\gamma w)=\rho(\gamma) \Delta(w).
\end{align*}

Now we consider a $(\Gamma,G)$-bundle on $B$. Let $F$ be a $(\Gamma,G)$-bundle on $\Spec (B)$ of type $\rho$. Denote by $\mathbb{U}:={\rm Aut}_{(\Gamma,G)}(F) \subseteq G(B)$ the automorphism group of $F$. Given an element $\sigma \in \mathbb{U}$, let $\varsigma:=\Delta^{-1} \sigma \Delta$. We then have
\begin{align*}
\varsigma(\gamma w)=\varsigma(w),
\end{align*}
which means that $\varsigma$ is $\Gamma$-invariant. Therefore, it can be descended to an element $G(A)$ by substituting $z=w^d$, where $A=\mathbb{C}[[z]]$. Note that for each root $r \in R$, we have
\begin{align*}
\varsigma(w)_r=\sigma(w)_r w^{-d \cdot r(\theta)},
\end{align*}
where the subscript $r$ means that the element is in $U_r(B)$. Note that $\sigma(w)_r$ is a holomorphic function, and $\varsigma(w)$ is a $\Gamma$-invariant meromorphic function. Substituting $z=w^d$, we have
\begin{align*}
\varsigma(z)_r=\sigma(z)_r z^{-r(\theta)}.
\end{align*}
Therefore, the order of the pole of $\varsigma(z)_r$ is bounded by $\lceil -r(\theta) \rceil$. Since $\varsigma(w)_r$ is $\Gamma$-invariant, we have $\varsigma(z)_r \in U_r(z^{m_r(\theta)}\mathbb{C}[[z]])$ for each $r \in R$. In conclusion, the element $\varsigma(z)$ is in $G_{\theta}$. The above discussion implies the isomorphism
\begin{align*}
\mathbb{U} \cong G_{\theta}.
\end{align*}
This also implies that a $(\Gamma,G)$-bundle $F$ of type $\rho$ over $\Spec(B)$ corresponds to a unique parahoric $\mathcal{G}_{\theta}$-torsor $E$ over $\Spec(A)$, and this is a one-to-one correspondence.

Now we consider the correspondence globally. Let $X$ be a smooth algebraic curve over $\mathbb{C}$ of genus $g \geq 2$ with a fixed reduced effective divisor $D$. We fix a collection of rational weights $\boldsymbol\theta=\{\theta_x, x \in D\}$, where $\theta_x \in Y(T) \otimes_{\mathbb{Z}} \mathbb{Q}$. Denote by $d_x$ the denominator of the rational weight $\theta_x$. The data $(X,D,(d_x)_{x \in D})$ uniquely determine a Galois covering $\pi: Y \rightarrow X$ with Galois group $\Gamma$ such that
\begin{itemize}
	\item $D$ is the branch locus;
	\item $R:=\pi^{-1}(D)$ is the ramification locus;
	\item the stabilizer group of $y=\pi^{-1}(x)$ is $\Gamma_y:=\Gamma_{d_x}$, where $x \in D$ and $\Gamma_{d_x}$ is the cyclic group of order $d_x$.
\end{itemize}

Let $\boldsymbol\rho:=\{\rho_y,y \in R\}$ be a collection of representations $\rho_y: \Gamma_y \rightarrow T$. Given a $(\Gamma,G)$-bundle $F$ on $Y$ of type $\boldsymbol\rho$, the restriction $F_y:=F|_{\mathbb{D}_y}$ is a $(\Gamma,G)$-bundle on a $\Gamma$-invariant formal disc $\mathbb{D}_y$. By the discussion above, the $(\Gamma,G)$-bundle $F_y$ of type $\rho_y$ corresponds to a unique $\mathcal{G}_{\theta_x}$-torsor of type $\theta_x$ on $\mathbb{D}_x$, where $\mathbb{D}_x$ is a formal disc around $x=\pi(y) \in D$ and $\theta_x$ is the rational weight corresponding to $\rho_y$. By gluing the local data $\{(F|_{Y\backslash R})^{\Gamma}, F_y, y \in R \}$ together, we get a $\mathcal{G}_{\boldsymbol\theta}$-torsor $E$ on $X$, where ${\boldsymbol\theta}:=\{\theta_x, x \in D\}$. This correspondence is actually a one-to-one correspondence.
\begin{thm}[Theorem 5.3.1 in \cite{BS}]\label{207}
	With respect to the notation above, there is an isomorphism
	\begin{align*}
	{\rm Bun}^{\boldsymbol\rho}(Y,\Gamma,G) \cong {\rm Bun}(X,\mathcal{G}_{\boldsymbol\theta})
	\end{align*}
	as stacks.
\end{thm}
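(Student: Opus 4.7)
The plan is to lift the pointwise construction established in the paragraphs just preceding the theorem to a morphism of stacks, then invert it using the same local mechanism. The key input is already in hand: the twist by $\Delta(w) = w^{d\theta}$ provides, on each $\Gamma$-stable formal disk, an isomorphism $\mathbb{U} \cong G_\theta$ of automorphism groups, and hence a bijection between $(\Gamma,G)$-bundles of type $\rho_y$ on $\mathbb{D}_y$ and $\mathcal{G}_{\theta_x}$-torsors on $\mathbb{D}_x$. What remains is to glue these local equivalences to the generic part of the curve, check that the resulting functors are quasi-inverse, and verify they work in families.

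For the forward direction, let $F \in {\rm Bun}^{\boldsymbol\rho}(Y,\Gamma,G)$. Over $Y \setminus R$ the action of $\Gamma$ is free, so $\Gamma$-descent along the \'etale Galois cover $\pi : Y \setminus R \to X \setminus D$ gives a $G$-bundle $E^\circ := (\pi_* F|_{Y\setminus R})^{\Gamma}$ on $X \setminus D$, which is automatically a $\mathcal{G}_{\boldsymbol\theta}$-torsor there because $\mathcal{G}_{\boldsymbol\theta}$ agrees with $G \times (X \setminus D)$ away from $D$. Around each $x \in D$, the local correspondence detailed above produces a $\mathcal{G}_{\theta_x}$-torsor $E_x$ on the formal disk $\mathbb{D}_x$ from $F|_{\mathbb{D}_y}$. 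A Beauville--Laszlo style gluing along the punctured formal disks $\mathbb{D}_x^{\times}$ then assembles $E^\circ$ with the family $\{E_x\}_{x \in D}$ into a single $\mathcal{G}_{\boldsymbol\theta}$-torsor $E$ on $X$; what needs to be checked here is that the transition data, which live in $\mathbb{U}(\mathbb{D}_y^{\times})^{\Gamma}$, map under conjugation by $\Delta$ to transition data in $\mathcal{G}_{\boldsymbol\theta}(\mathbb{D}_x^{\times})$, and this is exactly the content of the identity $\varsigma(z)_r \in U_r(z^{m_r(\theta)} A)$ derived just above.

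The inverse functor sends $E \in {\rm Bun}(X, \mathcal{G}_{\boldsymbol\theta})$ to the bundle obtained by pulling back $E|_{X \setminus D}$ to $Y \setminus R$ with its canonical $\Gamma$-linearization, and extending across $R$ by inverting the $\Delta$-twist on each formal disk; the type at each $y \in R$ is then $\rho_y$ by construction. The two composites are naturally isomorphic to the identities by the bijectivity of the $\Delta$-twist recalled above. To upgrade this to an equivalence of algebraic stacks, one runs the same argument with $F$ replaced by an $S$-flat family, noting that $\Delta$ is independent of the test base $S$; combined with the fact that both sides are algebraic stacks locally of finite type, descent along the faithfully flat map $\pi$ closes the argument. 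I expect the main technical obstacle to be careful bookkeeping of the $\Gamma$-equivariance during the gluing along $\mathbb{D}_x^{\times}$, and in particular verifying that the isomorphism $\mathbb{U} \cong G_\theta$ globalizes from an identification of automorphism groups to an isomorphism of sheaves of groups over the punctured formal disk, which is what is actually needed for the descent step to apply uniformly in families.
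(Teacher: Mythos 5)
Your proposal follows essentially the same route as the paper (and as the cited source \cite{BS}): the local correspondence on each formal disk via conjugation by $\Delta(w)=w^{d\theta}$, which identifies the $\Gamma$-equivariant automorphism group $\mathbb{U}$ with the parahoric group $G_{\theta}$, combined with $\Gamma$-descent over the free locus $Y\setminus R \to X\setminus D$ and Beauville--Laszlo-type gluing along the punctured formal disks. The points you flag as needing care --- that the identification of automorphism groups upgrades to an isomorphism of sheaves of groups over $\mathbb{D}_x^{\times}$ so the transition data match, and that the construction works in $S$-flat families --- are exactly the details the paper delegates to \cite[Theorem 5.3.1]{BS}, so your outline is consistent with the intended argument.
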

\noindent This theorem implies that the correspondence also holds as algebraic stacks locally of finite type.

Now we consider the general case. Let $\theta$ be a weight corresponding to the representation $\rho$ as discussed above. Define
\begin{align*}
\delta:=\theta+\vartheta \in Y(T)\otimes_{\mathbb{Z}} \mathbb{Q},
\end{align*}
where $\vartheta$ is a weight such that $\gamma^{\vartheta}=I$ (the identity element in $G$). Let $\Delta_\delta(w):=w^{\delta}$ be an element in $T(B)$. We have
\begin{align*}
\Delta_\delta(\gamma w)=(\gamma w)^{\theta+\vartheta}=\gamma^{\theta} w^{\theta+\vartheta}=\rho(\gamma) \Delta_\delta(w),
\end{align*}
which means that $\Delta_\delta(w)$ is $\Gamma$-equivariant. Applying the same proof as for Theorem \ref{207} to $\Delta_\delta(w)$, we get the following isomorphism
\begin{align*}
{\rm Bun}^{\boldsymbol\rho}(Y,\Gamma,G) \cong {\rm Bun}(X,\mathcal{G}_{\boldsymbol\delta}).
\end{align*}
This observation provides the following proposition.
\begin{prop}\label{208}
	With respect to the notation above, there is an isomorphism of stacks
	\begin{align*}
	{\rm Bun}(X,\mathcal{G}_{\boldsymbol\theta}) \cong {\rm Bun}(X,\mathcal{G}_{\boldsymbol\delta}).
	\end{align*}
\end{prop}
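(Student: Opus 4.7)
The plan is to route the desired equivalence through the intermediate category ${\rm Bun}^{\boldsymbol\rho}(Y,\Gamma,G)$ of $(\Gamma,G)$-bundles of type $\boldsymbol\rho$ on the Galois cover $Y\to X$. Since Theorem~\ref{207} already establishes
\[
{\rm Bun}^{\boldsymbol\rho}(Y,\Gamma,G)\cong{\rm Bun}(X,\mathcal{G}_{\boldsymbol\theta})
\]
for the small weight $\boldsymbol\theta$, it suffices to build an analogous equivalence with the perturbed weight $\boldsymbol\theta'$, and then compose the two.

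At each ramification point $y\in R$ with stabilizer $\Gamma_y$ and formal coordinate $w$, I would replace the intertwiner $\Delta(w)=w^{d\theta_x}$ used in the proof of Theorem~\ref{207} by $\Delta'(w)=w^{d\theta'_x}$. The displayed computation
\[
\Delta'(\gamma w)=\gamma^{d\theta_x}\gamma^{d\vartheta_x}\,\Delta'(w)=\rho_y(\gamma)\,\Delta'(w)
\]
shows that $\Delta'$ still intertwines the $\Gamma_y$-action, using exactly the hypothesis $\gamma^{\vartheta_x}=I$ to discard the contribution of the twist $\vartheta_x:=\theta'_x-\theta_x$. Running the same $z=w^d$ descent and examining each root component of a local $(\Gamma,G)$-automorphism $\sigma$, conjugation by $\Delta'$ now shifts valuations by $-d\cdot r(\theta'_x)$ rather than $-d\cdot r(\theta_x)$, so the local automorphism group scheme ${\rm Aut}_{(\Gamma,G)}(F_y)$ is identified with $\mathcal{G}_{\theta'_x}$ in place of $\mathcal{G}_{\theta_x}$.

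With this local identification in hand, the global equivalence follows by gluing in exactly the fashion of Theorem~\ref{207}: the restriction of a $(\Gamma,G)$-bundle to $Y\setminus R$ descends to an ordinary $G$-bundle on $X\setminus D$ (a step insensitive to the choice of $\Delta$ versus $\Delta'$), while the transition data over the punctured formal discs are translated through $\Delta'$ rather than $\Delta$. Composing then yields
\[
{\rm Bun}(X,\mathcal{G}_{\boldsymbol\theta})\cong{\rm Bun}^{\boldsymbol\rho}(Y,\Gamma,G)\cong{\rm Bun}(X,\mathcal{G}_{\boldsymbol\theta'}),
\]
which is the desired isomorphism of stacks.

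The main obstacle is genuinely bookkeeping: one must confirm that replacing $\theta_x$ by $\theta_x+\vartheta_x$ in the intertwiner produces precisely the parahoric $\mathcal{G}_{\theta'_x}$ of Definition~\ref{parahoric_subgroup}, rather than a conjugate or neighbour, and that the identification is functorial on $B$-points so the equivalence is one of algebraic stacks locally of finite type in the sense of Lemmas~\ref{203} and~\ref{206}. The hypothesis $\gamma^{\vartheta_x}=I$ is exactly what makes both checks come out cleanly, since without it the two intertwiners would differ by a nontrivial $(\Gamma,G)$-automorphism and the matching of the local parahoric models would break down.
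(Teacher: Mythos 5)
Your proposal follows essentially the same route as the paper: the paper also replaces the intertwiner $\Delta$ by $\Delta'(w)=w^{\theta'}$, verifies $\Gamma$-equivariance using $\gamma^{\vartheta}=I$, reruns the local descent argument of Theorem~\ref{207} to obtain ${\rm Bun}^{\boldsymbol\rho}(Y,\Gamma,G)\cong{\rm Bun}(X,\mathcal{G}_{\boldsymbol\theta'})$, and composes with the isomorphism for $\boldsymbol\theta$. Your version is correct and, if anything, spells out the valuation-shift bookkeeping slightly more explicitly than the paper does.
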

\noindent Note that this isomorphism can be also realized in terms of Hecke transformations; we refer to \cite[\S 3.3]{BGM} for a detailed exposition.

\section{Logahoric Higgs Torsors and Equivariant Logarithmic Higgs bundles}

In this section, we study logahoric Higgs torsors and equivariant logarithmic Higgs bundles. We prove that there is a correspondence between logahoric $\mathcal{G}_{\boldsymbol\theta}$-Higgs torsors on $X$ and $\Gamma$-equivariant logarithmic $G$-Higgs bundles of type $\boldsymbol\rho$ on $Y$, and this correspondence implies the isomorphism of the corresponding algebraic stacks (Theorem \ref{305}).

\subsection{Logahoric Higgs Torsors}
In this subsection, we define logahoric $\mathcal{G}_{\boldsymbol\theta}$-Higgs torsors on smooth algebraic curves $X$ following the notation from \S 2.1.

Let $X$ be a smooth algebraic curve with a given reduced effective divisor $D$. Denote by $K_X$ the canonical line bundle on $X$. Let $G$ be a connected reductive complex group together with a set of weights $\boldsymbol\theta=\{\theta_x, x \in D\}$. Denote by $\mathcal{G}_{\boldsymbol\theta}$ the parahoric group scheme over $X$ and let $E$ be a parahoric $\mathcal{G}_{\boldsymbol\theta}$-torsor on $X$. Note that for a parahoric $\mathcal{G}_{\boldsymbol\theta}$-torsor $E$, we can define its adjoint bundle on each local chart and then glue everything together. Thus, we obtain a bundle and denote it by $E(\mathfrak{g})$ or ${\rm Ad}(E)$.  The following definition is the Higgs version of Boalch's ``\emph{tame parahoric connection}'' (\cite[\S 2.3]{Bo}):

\begin{defn}\label{301}
	A \emph{logahoric $\mathcal{G}_{\boldsymbol\theta}$-Higgs torsor} on $X$ is a pair $(E,\varphi)$, where
	\begin{itemize}
		\item $E$ is a parahoric $\mathcal{G}_{\boldsymbol\theta}$-torsor on $X$;
		\item $\varphi \in H^0(X, E(\mathfrak{g}) \otimes K_X(D))$ is a section.
	\end{itemize}
	The section $\varphi$ is called a \emph{logarithmic Higgs field}.
\end{defn}

\begin{rem}
	In Yun's article \cite[\S 4.3]{Yun}, the logarithmic Higgs field $\varphi$ is considered as a section of $E(\mathfrak{g})(D)$. In this paper, we slightly modify this definition and the section is taken from $H^0(X,E(\mathfrak{g}) \otimes K_X(D))$.
\end{rem}

\begin{exmp}\label{exmp para Higgs}
	As an example, let $G={\rm SL}_2(\mathbb{C})$, and we take $\theta={\rm diag}(\frac{1}{2},-\frac{1}{2})$. Then, the group scheme $\mathcal{G}_\theta$ over $\mathbb{D}$ can be regarded as elements of the form
	\begin{align*}
	\begin{pmatrix}
	A & z^{-1}A \\
	zA & A
	\end{pmatrix},
	\end{align*}
	with determinant one. Then, a local expression of a logahoric Higgs field $\varphi$ can be written in the form $\begin{pmatrix}
	A & z^{-1}A \\
	zA & A
	\end{pmatrix} \dfrac{dz}{z}$ with trace zero.
\end{exmp}

Denote by $\mathcal{M}_H(X,\mathcal{G}_{\boldsymbol\theta})$ the set of logahoric $\mathcal{G}_{\boldsymbol\theta}$-Higgs torsors on $X$, which also has a natural stack structure. Furthermore, we have a natural forgetful morphism of stacks
\begin{align*}
\mathcal{M}_H(X,\mathcal{G}_{\boldsymbol\theta}) \rightarrow {\rm Bun}(X,\mathcal{G}_{\boldsymbol\theta}),
\end{align*}
of which the fiber over a point $E \in {\rm Bun}(X,\mathcal{G}_{\boldsymbol\theta})$ is a finite module $H^0(X,E(\mathfrak{g}) \otimes K_X(D))$. Therefore, the forgetful morphism is representable and of finite type. The above discussion provides the following lemma, with an adaptation of Yun's proof for the same result (see \cite[Lemma 4.3.5]{Yun}):
\begin{lem}\label{302}
	The stack $\mathcal{M}_H(X,\mathcal{G}_{\boldsymbol\theta})$ is an algebraic stack locally of finite type.
\end{lem}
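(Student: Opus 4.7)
The plan is to exploit the forgetful morphism
\[
\pi\colon \mathcal{M}_H(X,\mathcal{G}_{\boldsymbol\theta}) \longrightarrow {\rm Bun}(X,\mathcal{G}_{\boldsymbol\theta})
\]
and reduce the statement to Lemma \ref{203}, which already grants that the base is an algebraic stack locally of finite type. Concretely, I would argue that $\pi$ is representable by schemes and of finite type, and then invoke the standard stability of ``algebraic + locally of finite type'' under such a relative property.

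First I would set up the relative picture. Given any scheme $S$ with a morphism $S \to {\rm Bun}(X,\mathcal{G}_{\boldsymbol\theta})$ classifying a family $E_S$ of parahoric torsors on $X_S := X \times S$, the associated adjoint bundle ${\rm Ad}(E_S)$ is a locally free $\mathcal{O}_{X_S}$-module of finite rank (constructed locally on $X$ from the Lie algebra of $\mathcal{G}_{\boldsymbol\theta}$ and glued, exactly as for $E$ itself). Twisting by the pullback of $K_X(D)$ produces a coherent sheaf $\mathcal{F}_S := {\rm Ad}(E_S) \otimes p_{X}^{*} K_X(D)$ on $X_S$, flat over $S$ with proper support in the $X$-direction.

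Next I would identify the fiber product $S \times_{{\rm Bun}(X,\mathcal{G}_{\boldsymbol\theta})} \mathcal{M}_H(X,\mathcal{G}_{\boldsymbol\theta})$ with the functor on $S$-schemes
\[
T \longmapsto H^0\!\left(X_T,\, \mathcal{F}_S|_{X_T}\right),
\]
i.e.\ the $S$-scheme whose $T$-points are the global sections of $\mathcal{F}_S$ on $X_T$. By the standard representability result for cohomology and base change for a proper flat family of coherent sheaves (Grothendieck's theorem on $R\pi_*$, applied to the structure morphism $p_S\colon X_S\to S$), this functor is representable by a scheme of finite type over $S$ — one can realise it as the relative spectrum $\mathrm{Spec}_S\,\mathrm{Sym}(\mathcal{H}^\vee)$ of the symmetric algebra on the dual of a locally free resolution computing $p_{S*}\mathcal{F}_S$. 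Consequently $\pi$ is schematic and of finite type.

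Finally, combining Lemma \ref{203} with the fact that a stack that is representable and of finite type over an algebraic stack locally of finite type is itself algebraic and locally of finite type, we conclude that $\mathcal{M}_H(X,\mathcal{G}_{\boldsymbol\theta})$ has the stated property. The only point requiring a bit of care — and arguably the main technical obstacle — is verifying that ${\rm Ad}(E_S)$ is genuinely locally free on $X_S$ for an arbitrary family $E_S$; this uses the smoothness and flatness of $\mathcal{G}_{\boldsymbol\theta}$ over $X$ recorded after Definition \ref{202}, together with the general fact that the adjoint representation of a smooth affine group scheme produces a vector bundle from a torsor.
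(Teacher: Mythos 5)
Your proposal follows essentially the same route as the paper: both exploit the forgetful morphism to ${\rm Bun}(X,\mathcal{G}_{\boldsymbol\theta})$, observe that its fibers are the finite modules $H^0(X,{\rm Ad}(E)\otimes K_X(D))$ so that the morphism is representable and of finite type, and then conclude from Lemma \ref{203} (the paper simply cites an adaptation of Yun's argument at this point). Your write-up supplies the details the paper leaves implicit — the relative section functor, its representability via $\mathrm{Spec}_S\,\mathrm{Sym}$ of a dual complex, and the local freeness of the adjoint bundle in families — and is correct.
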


\subsection{Equivariant Logarithmic Higgs Bundles}
Let $Y$ be a smooth algebraic curve. Denote by $K_Y$ the canonical line bundle on $Y$ and let $\Gamma$ be a finite group together with an action on $Y$. Denote by $R$ a set of points of $Y$ (also a divisor), such that the stabilizer group $\Gamma_y$ is nontrivial for any $y \in R$. Let $G$ be a simply connected and semisimple linear algebraic group.

\begin{defn}\label{303}
	Given a set of representations ${\boldsymbol\rho}=\{\rho_y: \Gamma_y \rightarrow T, y \in R\}$, a \emph{$\Gamma$-equivariant logarithmic $G$-Higgs bundle} on $Y$ is a pair $(F,\phi)$, where
	\begin{itemize}
		\item $F$ is a $(\Gamma,G)$-bundle of type $\boldsymbol\rho$ on $Y$;
		\item $\phi$ is a $\Gamma$-equivariant element in $H^0(Y, F(\mathfrak{g}) \otimes K_Y(R))$.
	\end{itemize}
	The section $\phi$ is an \emph{(equivariant) logarithmic Higgs field}. For simplicity, this pair $(F,\phi)$ is called a \emph{logarithmic $(\Gamma,G)$-Higgs bundle}.
\end{defn}
Denote by $\mathcal{M}^{\boldsymbol\rho}_H(Y,\Gamma,G)$ the stack of logarithmic $(\Gamma,G)$-Higgs bundles of type $\boldsymbol\rho$ on $Y$. As discussed in \S 2.2, the data $(Y,R,\Gamma_y)$ uniquely determines a root stack $\mathcal{X}$. There is a canonical isomorphism of stacks
\begin{align*}
\mathcal{M}^{\boldsymbol\rho}_H(Y,\Gamma,G) \cong \mathcal{M}^{\boldsymbol\rho}_H(\mathcal{X},G),
\end{align*}
where $\mathcal{M}^{\boldsymbol\rho}_H(\mathcal{X},G)$ is the stack of $G$-bundles on $\mathcal{X}$. Note that $\mathcal{X}$ is also a Deligne--Mumford stack. As an application of Artin's theorem \cite{Art1974}, $\mathcal{M}^{\boldsymbol\rho}_H(\mathcal{X},G)$ is an algebraic stack locally of finite type \cite[Theorem 5.1]{Sun202003}. Then, we have the following lemma.

\begin{lem}\label{304}
	The stack $\mathcal{M}^{\boldsymbol\rho}_H(Y,\Gamma,G)$ is an algebraic stack locally of finite type.
\end{lem}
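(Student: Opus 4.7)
The plan is to mimic the argument used for Lemma \ref{302} in the non-equivariant parahoric case. I would set up the forgetful morphism of stacks
\begin{align*}
p: \mathcal{M}^{\boldsymbol\rho}_H(Y,\Gamma,G) \longrightarrow \mathrm{Bun}^{\boldsymbol\rho}(Y,\Gamma,G), \qquad (F,\phi)\longmapsto F,
\end{align*}
and reduce to two inputs: (i) the target is already known to be an algebraic stack locally of finite type by Lemma \ref{206}, and (ii) $p$ is representable and of finite type. Composing these two facts produces the conclusion, since algebraicity and being locally of finite type are preserved under representable, finite-type morphisms from a prestack to an algebraic stack locally of finite type.

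For step (ii), the fiber of $p$ over a $(\Gamma,G)$-bundle $F$ is the space of $\Gamma$-equivariant sections
\begin{align*}
H^0\bigl(Y, \mathrm{Ad}(F)\otimes K_Y(R)\bigr)^{\Gamma},
\end{align*}
which is a finite-dimensional $\mathbb{C}$-vector space because $\mathrm{Ad}(F)\otimes K_Y(R)$ is a coherent sheaf on the projective curve $Y$ and $\Gamma$ is finite. In families, over a test scheme $S$, the fiber is represented by the affine $S$-scheme associated with the coherent push-forward to $S$ of the $\Gamma$-invariant sections of $\mathrm{Ad}(F_S)\otimes\mathrm{pr}_Y^{*}K_Y(R)$, so $p$ is representable and of finite type.

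The only subtlety worth verifying carefully (and the place I would expect to spend real effort) is that the formation of $\Gamma$-invariants commutes with arbitrary base change, so that the fiber construction genuinely yields a coherent sheaf on $S$ and thus an affine scheme of finite type. Since $|\Gamma|$ is invertible in $\mathbb{C}$, the averaging idempotent $\frac{1}{|\Gamma|}\sum_{\gamma\in\Gamma}\gamma$ makes the functor of $\Gamma$-invariants exact and compatible with flat base change, which handles this point. Alternatively, I would invoke the equivalence
\begin{align*}
\mathcal{M}^{\boldsymbol\rho}_H(Y,\Gamma,G)\cong\mathcal{M}^{\boldsymbol\rho}_H(\mathcal{X},G)
\end{align*}
with the Deligne--Mumford root stack $\mathcal{X}$ noted just before the lemma, and argue directly that the forgetful map to $\mathrm{Bun}^{\boldsymbol\rho}(\mathcal{X},G)$ has fibers $H^0(\mathcal{X},\mathrm{Ad}(F)\otimes\omega_{\mathcal{X}})$ (with $\omega_{\mathcal{X}}$ the appropriate twist) via proper coherent push-forward along $\mathcal{X}\to\mathrm{Spec}\,\mathbb{C}$, which is automatically compatible with base change for a tame, proper Deligne--Mumford stack. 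Either route yields step (ii), and combining with Lemma \ref{206} gives the claim.
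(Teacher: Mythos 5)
Your proposal is correct, but your primary route is not the one the paper takes. The paper disposes of Lemma \ref{304} by transferring the whole problem to the root stack: it invokes the equivalence $\mathcal{M}^{\boldsymbol\rho}_H(Y,\Gamma,G)\cong\mathcal{M}^{\boldsymbol\rho}_H(\mathcal{X},G)$ for the tame Deligne--Mumford stack $\mathcal{X}$ determined by $(Y,R,\Gamma_y)$ and then cites Artin's criterion together with the general algebraicity result for stacks of Higgs bundles ($\Lambda$-modules) on projective Deligne--Mumford stacks from \cite[Theorem 5.1]{Sun202003} --- i.e.\ exactly your ``alternative'' route. Your main argument instead mimics the proof of Lemma \ref{302}: fiber the forgetful map $p:\mathcal{M}^{\boldsymbol\rho}_H(Y,\Gamma,G)\to\mathrm{Bun}^{\boldsymbol\rho}(Y,\Gamma,G)$ over the already-algebraic base of Lemma \ref{206}, and show $p$ is representable and of finite type because the fibers are the linear schemes attached to $H^0(Y,\mathrm{Ad}(F)\otimes K_Y(R))^{\Gamma}$. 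This is a sound and somewhat more self-contained argument; you correctly identify the one genuine point of care (base change for $\Gamma$-invariants), and the averaging idempotent $\frac{1}{|\Gamma|}\sum_{\gamma}\gamma$ does settle it --- indeed it makes the invariants a direct summand, so compatibility holds for \emph{arbitrary} base change, not merely flat. Two cosmetic caveats: you should note that $\mathcal{M}^{\boldsymbol\rho}_H(Y,\Gamma,G)$ is a stack (not just a prestack) before concluding algebraicity from a representable morphism, and the relative fiber should be packaged as the affine linear scheme $\mathbb{V}(\mathcal{Q})$ of a coherent sheaf on $S$ rather than literally a push-forward of sections. The trade-off is clear: your route keeps the argument parallel to Lemma \ref{302} and avoids the root-stack machinery, while the paper's route is shorter and sets up the root-stack dictionary that it reuses in \S 6 for the moduli space construction.
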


\subsection{Correspondence}
We first work on a formal disc $\mathbb{D}_y=\Spec(B)$ around a point $y \in R \subseteq Y$, and we will use the same notation as in \S 2.3. Let $\Gamma$ be a cyclic group of order $d$, and we have a natural $\Gamma$-action on $B=\mathbb{C}[[w]]$. Given  a weight $\theta$, denote by $\rho: \Gamma \rightarrow T$ the corresponding representation. Let $\Delta \in T(B)$ be the element such that $\Delta(\gamma w)=\rho(\gamma) \Delta(w)$.

Let $F$ be a $(\Gamma,G)$-bundle of type $\rho$ on $B$ and denote by $F(\mathfrak{g})$ the adjoint bundle. Without loss of generality, suppose that $F=G \times \mathbb{D}_y$. Let $\phi$ be an element in $\mathfrak{g}(\mathbb{C}[[w]]) \cdot \frac{dw}{w}$, which can be considered as a section $H^0(\mathbb{D}_y, F(\mathfrak{g}) \otimes \Omega^1_{\mathbb{D}}(y))$. Assume that $\phi$ is $\Gamma$-equivariant, that is,
\begin{align*}
\phi(\gamma w)=\rho(\gamma) \phi(w) \rho^{-1}(\gamma).
\end{align*}
Now consider $\varphi:=\Delta^{-1} \phi \Delta$ by conjugating with the matrix $\Delta$. Clearly, $\varphi$ is $\Gamma$-invariant:
\begin{align*}
\varphi(\gamma w)=\varphi(w).
\end{align*}
Therefore, $\varphi(w)$ descends to a section $\mathbb{D}_x \rightarrow E(\mathfrak{g}) \otimes \Omega^1_{\mathbb{D}_x}$ by substituting $z=w^d$, where $E$ is the $\mathcal{G}_\theta$-torsor corresponding to $F$. For each root $r \in R$, we have
\begin{align*}
\varphi(w)_r=\phi(w)_r w^{-d \cdot r(\theta)},
\end{align*}
and then, taking $z=w^d$, we get
\begin{align*}
\varphi(z)_r=\phi(z)_r z^{-r(\theta)}.
\end{align*}
Similar to the discussion in \S 2.3, the order of the pole of $\varphi(z)_r$ is bounded by $\lceil -r(\theta) \rceil$. Since $\varphi(z)_r$ is $\Gamma$-invariant, we have $\varphi(z)_r \in \mathfrak{u}_r(z^{m_r(\theta)}\mathbb{C}[[z]]) \otimes \frac{dz}{z}$ for each $r \in R$. In conclusion, the element $\varphi(z)$ is a section of $E(\mathfrak{g}) \otimes \Omega^1_{\mathbb{D}_x}(x)$ on $\mathbb{D}_x$. The above, in fact, describes a one-to-one correspondence, thus
\begin{align*}
H^0 (\mathbb{D}_y, F(\mathfrak{g}) \otimes  \Omega^1_{\mathbb{D}_y}(y))^{\Gamma} \cong H^0 (\mathbb{D}_x, E(\mathfrak{g}) \otimes \Omega^1_{\mathbb{D}_x}(x)).
\end{align*}

We next consider the correspondence globally. The setup is still the same as in \S 2.3. Let $X$ be a smooth algebraic curve with a fixed reduced effective divisor $D$. We fix a collection of rational weights $\boldsymbol\theta=\{\theta_x, x \in D\}$ and denote by $d_x$ the denominator of the rational weight $\theta_x$. Denote by $\pi: Y \rightarrow X$ the Galois covering determined by the above data. Let also $R\subseteq Y$ be the collection of pre-images of the points in $D$, which is the ramification divisor, and let $\boldsymbol\rho=\{\rho_y, y \in R\}$ be the set of representations corresponding to $\boldsymbol\theta$. We then have the following

\begin{thm}\label{305}
	With the notation above, there is an isomorphism
	\begin{align*}
	\mathcal{M}^{\boldsymbol\rho}_H(Y,\Gamma,G) \cong \mathcal{M}_H(X,\mathcal{G}_{\boldsymbol\theta})
	\end{align*}
	as algebraic stacks.
\end{thm}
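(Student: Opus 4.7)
The strategy is to bootstrap from Theorem \ref{207}, which already provides the stack isomorphism $\mathrm{Bun}^{\boldsymbol\rho}(Y,\Gamma,G) \cong \mathrm{Bun}(X,\mathcal{G}_{\boldsymbol\theta})$ at the level of underlying torsors, and then upgrade it to Higgs torsors by globalizing the local Higgs field bijection established immediately before the theorem. Both moduli stacks fit into forgetful diagrams
\begin{align*}
\mathcal{M}^{\boldsymbol\rho}_H(Y,\Gamma,G)\longrightarrow \mathrm{Bun}^{\boldsymbol\rho}(Y,\Gamma,G), \qquad \mathcal{M}_H(X,\mathcal{G}_{\boldsymbol\theta})\longrightarrow \mathrm{Bun}(X,\mathcal{G}_{\boldsymbol\theta}),
\end{align*}
which are representable and of finite type, with fibers being finite-dimensional vector spaces of global Higgs sections. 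Since the base stacks are already identified, it suffices to produce a natural isomorphism between these fibers (functorial in families) and to check it respects the linear structure.

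The plan is as follows. First, given $(F,\phi)\in\mathcal{M}^{\boldsymbol\rho}_H(Y,\Gamma,G)$, let $E$ be the $\mathcal{G}_{\boldsymbol\theta}$-torsor on $X$ assigned by Theorem \ref{207}. I would construct $\varphi\in H^0(X,\mathrm{Ad}(E)\otimes K_X(D))$ by gluing two pieces. On $X\setminus D$, the cover $\pi\colon Y\setminus R \to X\setminus D$ is étale Galois with group $\Gamma$, so $\mathrm{Ad}(E)\otimes K_X|_{X\setminus D}$ is identified with the $\Gamma$-invariants of $\pi_*(\mathrm{Ad}(F)\otimes K_Y)|_{X\setminus D}$, and $\phi|_{Y\setminus R}$ being $\Gamma$-equivariant descends uniquely to a section $\varphi_{\mathrm{gen}}$. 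Near each $x\in D$, I would take $\varphi|_{\mathbb{D}_x}$ to be the image of $\phi|_{\mathbb{D}_y}$ under the local bijection
\begin{align*}
H^0(\mathbb{D}_y,\mathrm{Ad}(F)\otimes \Omega^1_{\mathbb{D}_y}(y))^{\Gamma}\;\cong\;H^0(\mathbb{D}_x,\mathrm{Ad}(E)\otimes \Omega^1_{\mathbb{D}_x}(x))
\end{align*}
realized by $\varphi=\Delta^{-1}\phi\Delta$ and the substitution $z=w^d$. To glue these two, I must check agreement on $\mathbb{D}_x^\times$: on the punctured disc $\Gamma$ acts freely on $\mathbb{D}_y^\times$, conjugation by $\Delta$ preserves $\mathrm{Ad}(F)$, and descent via $z=w^d$ coincides with the étale descent used in the generic construction. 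Running the bijection backwards yields the inverse assignment $(E,\varphi)\mapsto(F,\phi)$, establishing a bijection on objects.

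Next, I would promote this to a morphism of stacks. Because Theorem \ref{207} is already an isomorphism of stacks (valid in families), one may work in families: for any scheme $S$, apply the object-level construction to $S$-families, using that the local Higgs correspondence is $\mathbb{C}$-linear and commutes with base change in $S$ (it is obtained by conjugation by a fixed element $\Delta\in T(B)$ together with a descent along a finite flat map). This shows the fiberwise bijections assemble into an isomorphism of sheaves of Higgs sections over $\mathrm{Bun}^{\boldsymbol\rho}(Y,\Gamma,G)\cong \mathrm{Bun}(X,\mathcal{G}_{\boldsymbol\theta})$, hence an isomorphism of the relative affine stacks $\mathcal{M}^{\boldsymbol\rho}_H(Y,\Gamma,G)\cong \mathcal{M}_H(X,\mathcal{G}_{\boldsymbol\theta})$. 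Morphisms of Higgs pairs correspond under this assignment because an intertwiner on torsors conjugates Higgs fields on both sides compatibly with $\Delta$.

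The main obstacle I expect is verifying the compatibility on $\mathbb{D}_x^\times$, where one has to reconcile two descriptions of the same section: the one coming from étale descent on $X\setminus D$ and the one coming from the ramified local model $z=w^d$ together with conjugation by $\Delta=w^{d\cdot\theta}$. Concretely, one must confirm that the orders of poles predicted by the root-space analysis $\varphi(z)_r\in \mathfrak{u}_r(z^{m_r(\theta)}\mathbb{C}[[z]])\otimes\frac{dz}{z}$ are exactly those allowed by the adjoint bundle $\mathrm{Ad}(E)$ of the parahoric group scheme $\mathcal{G}_{\boldsymbol\theta}$ twisted by $K_X(D)$, which follows from Definition \ref{parahoric_subgroup} but requires a careful root-by-root bookkeeping. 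A secondary, but routine, point is checking that the choice of uniformizer and of $\Delta$ affects the construction only by automorphisms of the torsors, so that the resulting map on stacks is well-defined independently of these choices.
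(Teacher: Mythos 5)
Your proposal is correct and follows essentially the same route as the paper: reduce via Theorem \ref{207} and the two forgetful morphisms to an identification of the fibers $H^0(X,\mathrm{Ad}(E)\otimes K_X(D))\cong H^0(Y,\mathrm{Ad}(F)\otimes K_Y(R))^{\Gamma}$, which is supplied by the local $\Delta$-conjugation and $z=w^d$ descent computation. Your treatment of the gluing on $\mathbb{D}_x^{\times}$ and of families is more explicit than the paper's, but the underlying argument is the same.
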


\begin{proof}
	By Theorem \ref{206}, the stack of $(\Gamma,G)$-bundles on $Y$ is isomorphic to the stack of parahoric $\mathcal{G}_{\boldsymbol\theta}$-torsors on $X$,
	\begin{align*}
	{\rm Bun}^{\boldsymbol\rho}(Y,\Gamma,G) \cong {\rm Bun}(X,\mathcal{G}_{\boldsymbol\theta}).
	\end{align*}
	There are two natural forgetful morphisms of stacks
	\begin{align*}
	\mathcal{M}_H(X,\mathcal{G}_{\boldsymbol\theta}) \rightarrow {\rm Bun}(X,\mathcal{G}_{\boldsymbol\theta}) \quad \text{and} \quad \mathcal{M}^{\boldsymbol\rho}_H(Y,\Gamma,G) \rightarrow {\rm Bun}^{\boldsymbol\rho}(Y,\Gamma,G),
	\end{align*}
	with fibers $H^0(X, E(\mathfrak{g}) \otimes K_X(D))$ and $H^0(Y, F(\mathfrak{g}) \otimes K_Y(R))^{\Gamma}$, for $E \in {\rm Bun}(X,\mathcal{G}_{\boldsymbol\theta})$ and $F \in {\rm Bun}^{\boldsymbol\rho}(Y,\Gamma,G)$ the corresponding $(\Gamma,G)$-bundles. Therefore, proving that  $\mathcal{M}^{\boldsymbol\rho}_H(Y,\Gamma,G) \cong \mathcal{M}_H(X,\mathcal{G}_{\boldsymbol\theta})$ is equivalent to showing that
	\begin{align*}
	H^0(X, E(\mathfrak{g}) \otimes K_X(D)) \cong H^0(Y, F(\mathfrak{g}) \otimes K_Y(R))^{\Gamma},
	\end{align*}
	which is already proven at the beginning of this subsection.
\end{proof}

\section{Stability Conditions}\label{sect_stab_cond}

In this section, we study stability conditions of logahoric $\mathcal{G}_{\boldsymbol\theta}$-Higgs torsors and logarithmic $(\Gamma,G)$-Higgs bundles. We prove that these stability conditions are equivalent. The equivalence of the stability conditions helps us construct the moduli space of logahoric $\mathcal{G}_{\boldsymbol\theta}$-Higgs torsors in \S\ref{sect_moduli_space}. The stability conditions we study is inspired by the works of
Ramanathan (\cite[Lemma 2.1]{Rama1975} as well as \cite{Rama19961,Rama19962}) and is a generalization of Balaji and Seshadri's work \cite{BS}.

\subsection{Logahoric Higgs Torsors}\label{subsect_stab_parah}
Let $G$ be a connected complex reductive group. Let $\theta \in Y(T) \otimes_{\mathbb{Z}} \mathbb{Q}$ be a rational weight, and denote by $G_\theta \subseteq G(K)$ the parahoric group corresponding to $\theta$. Recall that a parabolic subgroup $P$ of $G$ can be determined by a subset of roots $R_P \subseteq R$. We define the following parahoric group as a subgroup of $P(K)$
\begin{align*}
P_\theta:=\langle T(A), U_r(z^{m_r(\theta)}A), r \in R_P \rangle.
\end{align*}
Denote by $\mathcal{P}_\theta$ the corresponding group scheme over $\mathbb{D}=\text{Spec}(A)$.

Now we consider the global picture. Let $X$ be a smooth algebraic curve with reduced effective divisor $D$. Let $\boldsymbol\theta=\{\theta_x, x\in D\}$ be a collection of rational weights and define the group scheme $\mathcal{P}_{\boldsymbol\theta}$ on $X$ by gluing the local data
\begin{align*}
\mathcal{P}_{\boldsymbol\theta}|_{\mathbb{D}_x} \cong P \times X\backslash D, \quad \mathcal{P}_{\boldsymbol\theta}|_{\mathbb{D}_x} \cong \mathcal{P}_{\theta_x}, x \in D.
\end{align*}
By \cite[Lemma 3.18]{ChGP}, the group scheme $\mathcal{P}_{\boldsymbol\theta}$ is a smooth affine group scheme of finite type, flat over $X$ and $\mathcal{P}_{\boldsymbol\theta} \subseteq \mathcal{G}_{\boldsymbol\theta}$.

Let $\kappa: \mathcal{P}_{\boldsymbol\theta} \rightarrow \mathbb{G}_m$ be a morphism of group schemes over $X$, which we call \emph{a character of $\mathcal{P}_{\boldsymbol\theta}$}. The following lemma provides an alternative way to understand characters of group schemes.
\begin{lem}\label{402}
	With the notation above, there is an isomorphism
	\begin{align*}
	{\rm Hom}(\mathcal{P}_{\boldsymbol\theta}, \mathbb{G}_m) \cong {\rm Hom}(P,\mathbb{C}^*)
	\end{align*}
	as sets.
\end{lem}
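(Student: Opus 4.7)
The strategy is to construct a two-sided inverse to the obvious restriction map, using the local structure of $\mathcal{P}_{\boldsymbol\theta}$ at the punctures. On the open curve $X \setminus D$ the group scheme $\mathcal{P}_{\boldsymbol\theta}$ is constant, equal to $P \times (X \setminus D)$. For any connected base $Y$, morphisms of group schemes $P_Y \to \mathbb{G}_{m,Y}$ over $Y$ are classified by locally constant functions $Y \to X(P)$, and since $X \setminus D$ is connected with $X(P) = \mathrm{Hom}(P, \mathbb{C}^*)$ discrete, restriction yields a well-defined map $\mathrm{res}: \mathrm{Hom}(\mathcal{P}_{\boldsymbol\theta}, \mathbb{G}_m) \to \mathrm{Hom}(P, \mathbb{C}^*)$.

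For the inverse, given $\chi \in \mathrm{Hom}(P, \mathbb{C}^*)$, I would construct $\tilde\chi: \mathcal{P}_{\boldsymbol\theta} \to \mathbb{G}_m$ by gluing $\chi \times \mathrm{id}$ on $X \setminus D$ with local characters on each formal disc $\mathbb{D}_x$. The key observation is that $\chi$ is automatically trivial on every root subgroup $U_r$ with $r \in R_P$ (since the identity $[t, u_r(a)] = u_r((r(t)-1)a)$ exhibits $U_r \subseteq [P,P]$, while $\chi$ factors through $P/[P,P]$), whereas $\chi|_T \in X(T)$ sends $T(A)$ into $A^* = \mathbb{G}_m(A)$. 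Since $\mathcal{P}_{\theta_x}$ is generated inside $P(K)$ by $T(A)$ and the subgroups $U_r(z^{m_r(\theta_x)} A)$, the functorial lift $\chi_K: P(K) \to K^*$ carries $\mathcal{P}_{\theta_x}(A)$ into $A^*$, defining a morphism of group schemes $\mathcal{P}_{\theta_x} \to \mathbb{G}_m$ over $\mathbb{D}_x$. These local descriptions agree with $\chi_K$ on the punctured disc $\mathbb{D}_x^\times$, hence they glue to a global character $\tilde\chi$.

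That the two constructions are mutually inverse is immediate in one direction. In the other direction, given $\kappa$ with $\mathrm{res}(\kappa) = \chi$, both $\kappa$ and $\tilde\chi$ coincide on $X \setminus D$; thus at each $x \in D$ the quotient $\kappa/\tilde\chi: \mathcal{P}_{\theta_x} \to \mathbb{G}_m$ is trivial on the generic fiber $P_K$. The flatness of $\mathcal{P}_{\theta_x}$ over the DVR $\mathbb{D}_x$ (noted immediately after Definition \ref{202}) together with the separation of $\mathbb{G}_m$ then forces $\kappa = \tilde\chi$ everywhere. The main obstacle is precisely this uniqueness-across-the-divisor step, which relies crucially on flatness; the verification that $\chi$ lands in $A^*$ on the explicit generators is essentially bookkeeping once the presentation of $\mathcal{P}_{\theta_x}$ is in hand.
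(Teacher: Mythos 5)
Your proposal is correct and follows essentially the same route as the paper: restriction over $X\setminus D$ gives one direction, and in the other direction the character is extended over each formal disc $\mathbb{D}_x$, being uniquely determined there by its values on the overlap with $X\setminus D$ (the paper phrases this via the transition-function diagram and connectedness of $X$; you phrase it via flatness and schematic density of the generic fiber, which is the same point). Your additional verification that the extension actually \emph{exists} --- namely that $\chi$ kills each root subgroup $U_r$, $r\in R_P$, since $U_r\subseteq[P,P]$, and that $\chi|_T$ carries $T(A)$ into $A^*$, so $\chi_K$ is integral on the generators of $\mathcal{P}_{\theta_x}$ --- is a welcome refinement of a step the paper's proof leaves implicit, not a genuinely different argument.
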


\begin{proof}
	Let $\kappa \in {\rm Hom}(\mathcal{P}_{\boldsymbol\theta}, \mathbb{G}_m)$ be a character, which is a morphism of schemes over $X$. Note that
	\begin{align*}
	\mathcal{P}_{\boldsymbol\theta}|_{X \backslash D} \cong P \times X \backslash D, \quad \mathbb{G}_m|_{X \backslash D} \cong \mathbb{C}^* \times X \backslash D.
	\end{align*}
	Therefore, restricting the morphism $\kappa$ to $X \backslash D$, we have
	\begin{align*}
	\kappa|_{X\backslash D} : \mathcal{P}_{\boldsymbol\theta}|_{X \backslash D} \rightarrow \mathbb{G}_m|_{X \backslash D}.
	\end{align*}
	Since the character $\kappa$ is a morphism of schemes over $X$, the restriction $\kappa|_{X\backslash D}$ uniquely determines a character $\chi: P \rightarrow \mathbb{C}^*$.
	
	Now we will consider the opposite direction and show that a character $\chi: P \rightarrow \mathbb{C}^*$ will uniquely determine a morphism $\kappa: \mathcal{P}_{\boldsymbol\theta} \rightarrow \mathbb{G}_m$. Note that $\mathcal{P}_{\boldsymbol\theta}|_{\mathbb{D}_x} \cong P \times X\backslash D$. A character $\chi$ gives a morphism $\kappa|_{X \backslash D}: \mathcal{P}_{\boldsymbol\theta}|_{\mathbb{D}_x} \rightarrow \mathbb{G}_m|_{X \backslash D}$. As a morphism of sheaves, we have the following commutative diagram
	\begin{center}
		\begin{tikzcd}
		\mathcal{P}_{\boldsymbol\theta}(X \backslash D \cap \mathbb{D}_x) \arrow[r, "\kappa|_{X \backslash D}"] \arrow[d, "\psi_x"] & \mathbb{G}_m(X \backslash D \cap \mathbb{D}_x) \arrow[d, "id"]\\
		\mathcal{P}_{\boldsymbol\theta}(\mathbb{D}_x \cap X \backslash D) \arrow[r, "\kappa|_{\mathbb{D}_x}"]  & \mathbb{G}_m(\mathbb{D}_x \cap X \backslash D),
		\end{tikzcd}
	\end{center}
	where $\psi_x$ is the transition function defining the group scheme $\mathcal{P}_{\boldsymbol\theta}$. Since $\mathbb{G}_m$ is a constant group scheme over $X$, its transition function is trivial. Since $X$ is connected, the commutativity of the above diagram will uniquely determine the morphism $\kappa|_{\mathbb{D}_x}$. Therefore, a character $\chi \in {\rm Hom}(P,\mathbb{C}^*)$ will uniquely determine a morphism $\kappa \in {\rm Hom}(\mathcal{P}_{\boldsymbol\theta}, \mathbb{G}_m)$.
\end{proof}

Given this lemma, whenever there is no ambiguity we shall be using the same notation $\chi$ for characters in ${\rm Hom}(\mathcal{P}_{\boldsymbol\theta}, \mathbb{G}_m)$ and ${\rm Hom}(P,\mathbb{C}^*)$. A character of $\mathcal{P}_{\boldsymbol\theta}$ will be called an \emph{anti-dominant character}, if the corresponding character $P \rightarrow \mathbb{C}^*$ is anti-dominant. Now let $E$ be a parahoric $\mathcal{G}_{\boldsymbol\theta}$-torsor on $X$. Let $\varsigma: X \rightarrow E/\mathcal{P}_{\boldsymbol\theta}$ be a reduction of structure group. Denote by $E_{\varsigma}$ the pullback of the following diagram
\begin{center}
	\begin{tikzcd}
	E_{\varsigma} \arrow[r, dotted] \arrow[d, dotted] & E \arrow[d]\\
	X \arrow[r, "\varsigma"]  & E/\mathcal{P}_{\boldsymbol\theta}.
	\end{tikzcd}
\end{center}
Then, the pullback $E_{\varsigma}$ is a $\mathcal{P}_{\boldsymbol\theta}$-torsor on $X$. Let $\chi: \mathcal{P}_{\boldsymbol\theta} \rightarrow \mathbb{G}_m$ be a character. We obtain a line bundle $\chi_* (E_{\varsigma})$ on $X$, and denote it by $L(\varsigma,\chi)$.

\begin{defn}\label{401}
	We define the \emph{parahoric degree} of a parahoric $\mathcal{G}_{\boldsymbol\theta}$-torsor $E$ with respect to a given reduction $\varsigma$ and a character $\chi$ as
	\begin{align*}
	parh\deg E(\varsigma,\chi)=\deg L(\varsigma,\chi) + \langle  \boldsymbol\theta, \chi \rangle,
	\end{align*}
	where $\langle \boldsymbol\theta, \chi \rangle:=\sum_{x\in D} \langle  \theta_x, \chi \rangle$.
\end{defn}

We introduce the following notion of stability for parahoric $\mathcal{G}_{\boldsymbol\theta}$-torsors inspired by the works of Ramanathan (\cite[Lemma 2.1]{Rama1975} as well as \cite{Rama19961,Rama19962}) on the construction of moduli spaces of semistable $G$-bundles on a projective nonsingular irreducible complex curve.

\begin{defn}\label{403}
	A parahoric $\mathcal{G}_{\boldsymbol\theta}$-torsor $E$ is called \emph{$R$-stable} (resp. \emph{$R$-semistable}), if for
	\begin{itemize}
		\item any proper parabolic group $P \subseteq G$,
		\item any reduction of structure group $\varsigma: X \rightarrow E/\mathcal{P}_{\boldsymbol\theta}$,
		\item any nontrivial anti-dominant character $\chi: \mathcal{P}_{\boldsymbol\theta} \rightarrow \mathbb{G}_m$, which is trivial on the center of $\mathcal{P}_{\boldsymbol\theta}$,
	\end{itemize}
	one has
	\begin{align*}
	parh\deg E(\varsigma,\chi) > 0, \quad (\text{resp. } \geq 0).
	\end{align*}
\end{defn}

\begin{rem}\label{Heinloth_stability}	When $G$ is semisimple, Definition \ref{403} is equivalent to \cite[Definition 6.3.4]{BS}. Note that the latter requires the degree smaller than zero and our Definition \ref{403} requires the degree bigger than zero. This difference comes from the setup of the construction, namely, we take anti-dominant characters while Balaji and Seshadri took dominant characters.  The second difference is that we consider reductive groups while the authors in \cite{BS} work with a semisimple and simply connected algebraic group. Thus, the condition that characters act trivially on the center always holds in Balaji and Seshadri's situation. We study this condition further in \S\ref{sect_R_mu_stab} and introduce the  $R_\mu$-stability condition. 
		
When we consider trivial parahoric weights, the stability condition for a parahoric $\mathcal{G}_{\boldsymbol\theta}$-torsor (that is, the case of $G$-bundles) is the same as the stability criterion in Corollary 1.16 of Heinloth \cite{He2}. However, when the weights are non-trivial, we have an extra term $\langle \boldsymbol\theta, \chi \rangle$ for weights in the definition of parahoric degree (see Definition \ref{401}). Thus, for non-trivial weights, our definition of ``degree" is different than that considered in \cite{He2}. 
\end{rem}

Now we move to the stability condition for logahoric $\mathcal{G}_{\boldsymbol\theta}$-Higgs torsors on $X$. Let $(E,\varphi)$ be a logahoric $\mathcal{G}_{\boldsymbol\theta}$-Higgs torsor on $X$, where $\varphi \in H^0(X,E(\mathfrak{g})\otimes K_X(D))$ is a section. A reduction of structure group $\varsigma: X \rightarrow E/\mathcal{P}_{\boldsymbol \theta}$ is said to be \emph{compatible with the logarithmic Higgs field} $\varphi$, if there is a lifting $\varphi':X \rightarrow E_{\varsigma}(\mathfrak{p}) \otimes K_X(D)$, such that the following diagram commutes
\begin{center}
	\begin{tikzcd}
	& E_{\varsigma}(\mathfrak{p}) \otimes K_X(D) \arrow[d, hook] \\
	X \arrow[r, "\varphi"] \arrow[ur, "\varphi'", dotted] & E(\mathfrak{g}) \otimes K_X(D).
	\end{tikzcd}
\end{center}

\begin{defn}\label{405}
	A logahoric $\mathcal{G}_{\boldsymbol\theta}$-Higgs torsor $(E,\varphi)$ is called \emph{$R$-stable} (resp. \emph{$R$-semistable}), if for
	\begin{itemize}
		\item any proper parabolic group $P \subseteq G$,
		\item any reduction of structure group $\varsigma: X \rightarrow E/\mathcal{P}_{\boldsymbol\theta}$ compatible with $\varphi$,
		\item any nontrivial anti-dominant character $\chi: \mathcal{P}_{\boldsymbol\theta} \rightarrow \mathbb{G}_m$, which is trivial on the center of $\mathcal{P}_{\boldsymbol\theta}$,
	\end{itemize}
	one has
	\begin{align*}
	parh\deg E(\varsigma,\chi) > 0, \quad (\text{resp. } \geq 0).
	\end{align*}
\end{defn}

\begin{rem}\label{406}
	Recall that a classical non-parabolic Higgs bundle $(E,\varphi)$ on $X$ is stable, if for any $\varphi$-invariant subbundle $F \subseteq E$, one has $\frac{\deg F}{ {\rm rk} F} < \frac{\deg E}{ {\rm rk} E}$. For $(E,\varphi)$ a logahoric $\mathcal{G}_{\boldsymbol\theta}$-Higgs torsor, a reduction of structure group $\varsigma:X \rightarrow E/\mathcal{P}_{\boldsymbol\theta}$ compatible with the logarithmic Higgs field $\varphi$ is actually giving a ``$\varphi$-invariant subbundle". 
	
	We use Example \ref{exmp para Higgs} to explain this idea locally. Let $\varphi=\begin{pmatrix}
	1 & 0 \\
	z & -1
	\end{pmatrix} \dfrac{dz}{z}$ be a logahoric $(\mathcal{SL}_2)_{\theta}$-Higgs field. Denote by $\alpha$ the unique positive root of ${\rm SL}_2$. For $(\mathcal{SL}_2)_{\theta}$, there are two subgroup schemes, which are defined by $\alpha$ and $-\alpha$ respectively, namely,
	\begin{align*}
	\mathcal{P}_{\alpha}=\begin{pmatrix}
	A & z^{-1}A \\
	0 & A
	\end{pmatrix}, \quad \mathcal{P}_{-\alpha}=\begin{pmatrix}
	A & 0 \\
	zA & A
	\end{pmatrix}.
	\end{align*}
	It is easy to check that the logarithmic Higgs field $\varphi$ cannot be lifted to a section of $\mathfrak{p}_{\alpha} \otimes \frac{dz}{z}$, where $\mathfrak{p}_\alpha$ is the ``Lie algebra" of $\mathcal{P}_\alpha$ (see \cite{Bo}). On the other hand, it can be naturally lifted to a section of $\mathfrak{p}_{-\alpha} \otimes \frac{dz}{z}$. When we consider the associated sheaf, this property can be also understood as the logarithmic Higgs field $\varphi$ preserving the subsheaf $\begin{pmatrix} 0 \\ \ast \end{pmatrix}$, but not preserving $\begin{pmatrix} \ast \\ 0 \end{pmatrix}$.
\end{rem}

\subsection{Equivariant Logarithmic Higgs Bundles}
Let $Y$ be a smooth algebraic curve with a $\Gamma$-action. Denote by $R \subseteq Y$ the set of points such that the stabilizer group $\Gamma_y$ is nontrivial for each $y \in R$. Note that there is a correspondence between weights $\boldsymbol\theta$ and type $\boldsymbol\rho$. For simplicity, we say a $(\Gamma,G)$-bundle is of type $\boldsymbol\theta$, if it comes from a parahoric $\mathcal{G}_{\boldsymbol\theta}$-torsor.

Let $F$ be a $(\Gamma,G)$-bundle of type $\boldsymbol\theta$ on $Y$. Let $P \subseteq G$ be a parabolic subgroup. Given a $\Gamma$-equivariant reduction $\sigma: Y \rightarrow F/P$, the pullback 
\begin{center}
	\begin{tikzcd}
	F_\sigma \arrow[r, dotted] \arrow[d, dotted] & F \arrow[d]\\
	Y \arrow[r, "\sigma"]  & F/P
	\end{tikzcd}
\end{center}
is a $(\Gamma,P)$-bundle on $Y$. Let $\chi: P \rightarrow \mathbb{C}^*$ be a character. We obtain a line bundle $L(\sigma,\chi):=\chi_* F_\sigma$ on $Y$.

\begin{defn}\label{407}
	We define the \emph{degree} of a $(\Gamma,G)$-bundle $F$ with respect to a given reduction $\sigma$ and a character $\chi$ as
	\begin{align*}
	\deg F(\sigma,\chi)=\deg L(\sigma,\chi).
	\end{align*}
\end{defn}

\begin{defn}\label{408}
	A $(\Gamma,G)$-bundle $F$ of type $\boldsymbol\theta$ is called \emph{R-stable} (resp. \emph{R-semistable}), if for
	\begin{itemize}
		\item any proper parabolic group $P \subseteq G$,
		\item any $\Gamma$-equivariant reduction of structure group $\sigma: Y \rightarrow F/P$,
		\item any nontrivial anti-dominant character $\chi: P \rightarrow \mathbb{C}^*$, which is trivial on the center of $P$,
	\end{itemize}
	one has
	\begin{align*}
	\deg F(\sigma,\chi) >0, \quad (\text{resp. } \geq 0).
	\end{align*}
\end{defn}

Let $(F,\phi)$ be a logarithmic $(\Gamma,G)$-Higgs bundle on $Y$. Given a reduction $\sigma: Y \rightarrow F/P$, we have a natural morphism $F_\sigma \rightarrow F$ given by the pullback. This induces a natural morphism
\begin{align*}
F_\sigma(\mathfrak{p}) = F_\sigma \times_P \mathfrak{p} \rightarrow F_\sigma \times_G  \mathfrak{g} \rightarrow F \times_G \mathfrak{g}=F(\mathfrak{g}).
\end{align*}
A reduction of structure group $\sigma: Y \rightarrow F/P$ is \emph{compatible with} the logarithmic Higgs field $\phi$, if there is a lifting $\phi':Y \rightarrow F_\sigma(\mathfrak{p}) \otimes K_Y(R)$ such that the following diagram commutes
\begin{center}
	\begin{tikzcd}
	& F_\sigma(\mathfrak{p}) \otimes K_Y(R) \arrow[d, hook] \\
	Y \arrow[r, "\phi"] \arrow[ur, "\phi'", dotted] & F(\mathfrak{g}) \otimes K_Y(R).
	\end{tikzcd}
\end{center}

\begin{defn}\label{409}
	A logarithmic $\left( \Gamma, G \right)$-Higgs bundle $(F,\phi)$ on $Y$ is called \emph{R-stable} (resp. \emph{R-semistable}), if for
	\begin{itemize}
		\item any proper parabolic subgroup $P$ of $G$,
		\item any $\Gamma$-equivariant reduction of structure group $\sigma: Y \rightarrow F/P$ compatible with $\phi$,
		\item any nontrivial anti-dominant character $\chi: P \rightarrow \mathbb{C}^*$, which is trivial on the center of $P$,
	\end{itemize}
	one has
	\begin{align*}
	\deg F(\sigma,\chi) >0, \quad (\text{resp. } \geq 0).
	\end{align*}
\end{defn}




\subsection{Equivalence of Stability Conditions}
In this subsection, we will prove that a logahoric $\mathcal{G}_{\boldsymbol\theta}$-Higgs torsor $(E,\varphi)$ on $X$ is $R$-stable (resp. $R$-semistable) if and only if the corresponding logarithmic $(\Gamma,G)$-Higgs bundle $(F,\phi)$ on $Y$ is $R$-stable (resp. $R$-semistable). By Definition \ref{403} and Definition \ref{409}, we have to show the following correspondences
\begin{enumerate}
	\item every parabolic subgroup $P \subseteq G$ corresponds to a subgroup scheme $\mathcal{P}_{\boldsymbol\theta} \subseteq \mathcal{G}_{\boldsymbol\theta}$;
	\item there is a one-to-one correspondence between characters ${\rm Hom}(P, \mathbb{C}^*)$ and ${\rm Hom}(\mathcal{P}_{\boldsymbol\theta}, \mathbb{G}_m)$;
	\item every reduction of structure group $\varsigma: X \rightarrow E/\mathcal{P}_{\boldsymbol\theta}$ corresponds to a unique $\Gamma$-equivariant reduction of structure group $\sigma: Y \rightarrow F/P$;
	\item $\deg F(\sigma,\chi) \geq 0$ (resp. $>$) if and only if $parh\deg E(\varsigma,\chi) \geq 0$ (resp. $>$).
\end{enumerate}
The first condition holds from the Definitions in \S\ref{subsect_stab_parah}. Lemma \ref{402} gives us the correspondence between characters. The third and fourth conditions will be proved in Lemmas \ref{413} and \ref{414} below.

We review first the construction of a $(\Gamma,G)$-bundle $F$ by gluing local data from \S\ref{subsect_equi_bundle}. For each $y \in R$, we define $F_y:= \mathbb{D}_y  \times G$ and define $F_0:=(Y \backslash R) \times G$ together with the $\Gamma$-actions
\begin{align*}
& \gamma \cdot (u,g) \rightarrow (\gamma u, \rho_y(\gamma)g),\quad  u \in \mathbb{D}_y, \gamma \in \Gamma_y,\\
& \gamma \cdot (u,g) \rightarrow (\gamma u, g), \quad \quad  u \in Y \backslash R, \gamma \in \Gamma_y.
\end{align*}
By giving $(\Gamma,G)$-isomorphisms
\begin{align*}
\Theta_y: F_y|_{\mathbb{D}^{\times}_y} \rightarrow F_0|_{\mathbb{D}^{\times}_y}, \quad y \in R,
\end{align*}
we can define a $(\Gamma,G)$-bundle $F$ of type $\boldsymbol\rho$ on $Y$. Note that a $(\Gamma,G)$-isomorphism $\Theta_y$ satisfies the following condition
\begin{align*}
\Theta_y (\gamma w)=\rho(\gamma) \Theta_y(w).
\end{align*}
A $(\Gamma,G)$-bundle is usually not $\Gamma$-invariant. However, there is a canonical way to construct a $\Gamma$-invariant $G$-bundle $F'$ based on $F$. Here is the construction. On each punctured disc $\mathbb{D}^{\times}_y$, we define a new $(\Gamma,G)$-isomorphism
\begin{align*}
\Theta'_y(w):=\Delta(w)^{-1} \Theta_y(w),
\end{align*}
where $\Delta$ is the element in $T(\mathbb{D}_y^{\times})$ such that $\Delta(\gamma w)=\rho(\gamma)\Delta(w)$ (see \S\ref{subsect_corr_torsor_and_bundle}). Denote by $F'$ the $(\Gamma,G)$-bundle given by the isomorphisms $\Theta'_y$. It is easy to check that
\begin{align*}
\Theta'_y(\gamma w)=\Theta'_y(w),
\end{align*}
which implies that $F'$ is $\Gamma$-invariant. Actually, the correspondence we reviewed in \S\ref{subsect_corr_torsor_and_bundle} is given by taking the invariance of $F'$.

\begin{lem}\label{413}
	Let $E$ be a parahoric $\mathcal{G}_{\boldsymbol\theta}$-torsor on $X$. Denote by $F$ the corresponding $(\Gamma,G)$-bundle on $Y$. Then, we have
	\begin{align*}
	{\rm Hom}(X, E/\mathcal{P}_{\boldsymbol\theta}) \cong {\rm Hom}^{\Gamma}(Y, F/G),
	\end{align*}
	which describes a one-to-one correspondence between reductions of structure group of $E$ and $\Gamma$-equivariant reductions of structure group of $F$.
\end{lem}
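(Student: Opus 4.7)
The plan is to upgrade the equivalence of Theorem \ref{207} from a bijection of torsors to a bijection of pairs (torsor, reduction of structure group), by showing the construction used there is functorial with respect to the subgroup inclusions $P \subseteq G$ and $\mathcal{P}_{\boldsymbol\theta} \subseteq \mathcal{G}_{\boldsymbol\theta}$. Since a section $X \to E/\mathcal{P}_{\boldsymbol\theta}$ is the same as a reduction of $E$ to a $\mathcal{P}_{\boldsymbol\theta}$-subtorsor, and a $\Gamma$-equivariant section $Y \to F/P$ is the same as a $(\Gamma,P)$-subbundle $F' \subseteq F$, it suffices to match these two notions of subobject under the correspondence.

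First I would run the construction of \S 2.3 with $P$ and $\mathcal{P}_{\boldsymbol\theta}$ in place of $G$ and $\mathcal{G}_{\boldsymbol\theta}$. Every representation $\rho_y:\Gamma_y \to T$ factors through $T \subseteq P$, so the same element $\Delta(w)=w^{d\theta}$ realizes the conjugation isomorphism. The key observation is that this conjugation restricts on the $(\Gamma,P)$-automorphism group to an isomorphism with $P_\theta$: for every root $r \in R_P$ the component $w^{-d\cdot r(\theta)}$ produces exactly the pole order $\lceil -r(\theta)\rceil = m_r(\theta)$ appearing in the definition of $\mathcal{P}_\theta$, while roots outside $R_P$ do not show up because we have restricted to $P$. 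This yields an equivalence ${\rm Bun}^{\boldsymbol\rho}(Y,\Gamma,P) \cong {\rm Bun}(X,\mathcal{P}_{\boldsymbol\theta})$ by the same gluing as in the proof of Theorem \ref{207}, and this equivalence intertwines extension of structure group along $P \hookrightarrow G$ and $\mathcal{P}_{\boldsymbol\theta} \hookrightarrow \mathcal{G}_{\boldsymbol\theta}$, because the local gluing data for $F' \subseteq F$ conjugate via $\Delta$ to the gluing data for $E' \subseteq E$.

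Granted this, the desired bijection follows in both directions. A $\Gamma$-equivariant reduction $\sigma$ determines a $(\Gamma,P)$-subbundle $F' \subseteq F$; applying the $P$-correspondence to $F'$ produces a $\mathcal{P}_{\boldsymbol\theta}$-torsor $E'$, and compatibility with extension of structure group gives $E' \times^{\mathcal{P}_{\boldsymbol\theta}} \mathcal{G}_{\boldsymbol\theta} \cong E$, hence a section $\varsigma: X \to E/\mathcal{P}_{\boldsymbol\theta}$. Reversing the arrow produces the inverse map, and the two are mutually inverse because the underlying $\Delta$-conjugation is. The main technical point that I expect to require the most care is verifying the compatibility of the gluing data near the ramification points, namely that the transition isomorphisms $\Theta_y$ defining $F$ conjugate via $\Delta$ to transition data for $E$ in such a way that $(\Gamma,P)$-reductions and $\mathcal{P}_{\boldsymbol\theta}$-reductions correspond bijectively; away from the ramification locus, where $\mathcal{P}_{\boldsymbol\theta}|_{X\setminus D}\cong P\times (X\setminus D)$ and $\pi:Y\setminus R \to X\setminus D$ is étale Galois, the matching of reductions is the classical étale descent statement.
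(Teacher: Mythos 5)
Your proposal is correct, and it rests on exactly the same mechanism as the paper's proof: the conjugation by $\Delta(w)=w^{d\theta}\in T(B)$, which lies in $P(B)$ because the parabolic $P$ is taken to contain the fixed maximal torus $T$, converts $\Gamma$-equivariant data into $\Gamma$-invariant data that descends along $z=w^d$. The difference is purely in how the argument is packaged. The paper conjugates the section $\sigma\colon Y\to F/P$ itself: it passes to the $\Gamma$-invariant bundle $F'$ built from the twisted transition functions $\Theta'_y=\Delta^{-1}\Theta_y$, observes that the induced section $\sigma'$ of $F'/P$ satisfies $\sigma'(\gamma u)=\sigma'(u)$, and takes $\Gamma$-invariants to land in $E/\mathcal{P}_{\boldsymbol\theta}$. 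You instead first isolate an auxiliary equivalence ${\rm Bun}^{\boldsymbol\rho}(Y,\Gamma,P)\cong{\rm Bun}(X,\mathcal{P}_{\boldsymbol\theta})$ --- verifying that for $r\in R_P$ the conjugation produces precisely the pole bound $m_r(\theta)$ in the definition of $P_\theta$ --- and then identify reductions with subtorsors compatible with extension of structure group along $P\hookrightarrow G$ and $\mathcal{P}_{\boldsymbol\theta}\hookrightarrow\mathcal{G}_{\boldsymbol\theta}$. Your version is slightly longer but makes explicit two points the paper leaves implicit: that the correspondence of \S 2.3 restricts to the subgroup $P$ (which is exactly where the hypothesis $\rho_y(\Gamma_y)\subseteq T\subseteq P$ is used), and that it is functorial for extension of structure group; the paper's version is shorter and avoids having to discuss whether the reduced bundle $\sigma^*F$ is again of type $\boldsymbol\rho$ as a $(\Gamma,P)$-bundle, since it never leaves the level of sections of the quotient. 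Both proofs are equally (in)formal about the identification of $\Gamma$-invariant sections of $F'/P$ with sections of $E/\mathcal{P}_{\boldsymbol\theta}$ over the formal discs, so I see no gap relative to the paper's own standard of rigor.
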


\begin{proof}
	Given a $\Gamma$-equivariant reduction $\sigma: Y \rightarrow F/P$, we have
	\begin{align*}
	\sigma(\gamma w)= \rho(\gamma) \sigma(w) \rho(\gamma)^{-1},
	\end{align*}
	where $\gamma \in \Gamma$. A $\Gamma$-equivariant reduction uniquely determines a $\Gamma$-equivariant reduction
	\begin{align*}
	\sigma': Y \rightarrow F'/P
	\end{align*}
	of $F'$, which is the $\Gamma$-invariant $G$-bundle constructed from $F$. Since $F'$ is $\Gamma$-invariant, we have
	\begin{align*}
	\sigma'(\gamma u)=\sigma'(u).
	\end{align*}
	By taking invariants under $\Gamma$, we get a section $\varsigma: X \rightarrow E/\mathcal{P}_{\boldsymbol\theta}$. This process also holds in the other direction.
\end{proof}

\begin{lem}\label{414}
	Let $E$ be a parahoric $\mathcal{G}_{\boldsymbol\theta}$-torsor on $X$ and denote by $F$ the corresponding $(\Gamma,G)$-bundle of type ${\boldsymbol\theta}$ on $Y$. Let $d$ be the order of the group $\Gamma$. Let $\varsigma: X \rightarrow E/\mathcal{P}_{\boldsymbol\theta}$ be a reduction of structure group of $E$, and let $\sigma: Y \rightarrow F/P$ be the corresponding $\Gamma$-equivariant reduction of structure group of $F$. Let $\chi: P \rightarrow \mathbb{C}^*$ be a character. Then, the following identity holds
	\begin{align*}
	d \cdot parh\deg E(\varsigma,\chi)=\deg F(\sigma,\chi).
	\end{align*}
\end{lem}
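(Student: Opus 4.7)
The plan is to identify both sides of the claimed identity as degrees of explicit line bundles on the respective curves, and then to relate $L^\chi_{\boldsymbol\rho}$ on $Y$ to the pullback $\pi^* L^\chi_{\boldsymbol\theta}$ on $Y$ via a divisor supported on the ramification locus $R$. The main input will be the local gluing description of the correspondence between $\mathcal{G}_{\boldsymbol\theta}$-torsors and $(\Gamma,G)$-bundles from \S 2.3, applied to the parabolic reduction.

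First I will read off the local transition functions. Fix $x \in D$, let $y \in \pi^{-1}(x)$ with local coordinate $w$ and $z = w^{d_x}$, and trivialize $E$ over $\mathbb{D}_x$ so that the transition to the generic trivialization on $X \setminus D$ is given by some $\varsigma(z) \in \mathcal{P}_{\theta_x}(K_x)$. Applying the construction of \S 2.3 to $\varsigma$ and the parabolic $P$, the corresponding $(\Gamma,G)$-isomorphism $\Theta_y \colon F_y|_{\mathbb{D}_y^\times} \to F_0|_{\mathbb{D}_y^\times}$ takes the form
\[
\Theta_y(w) \;=\; \Delta(w)\,\varsigma(w^{d_x}), \qquad \Delta(w) \;=\; w^{d_x \theta_x} \in T(L_y),
\]
as one verifies from $\Delta(\gamma w) = \rho_y(\gamma)\Delta(w)$ together with the $\Gamma$-invariance of $\varsigma(w^{d_x})$. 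Applying the character $\chi$ (legitimately, by Lemma \ref{402}) and using that $\mathbb{G}_m$ is abelian, the transition function of $L^\chi_{\boldsymbol\rho}$ over $\mathbb{D}_y^\times$ becomes
\[
\chi(\Theta_y(w)) \;=\; w^{d_x \langle \theta_x,\, \chi \rangle} \cdot \chi\bigl(\varsigma(w^{d_x})\bigr).
\]
Since this differs from the transition function of $\pi^* L^\chi_{\boldsymbol\theta}$ across $\mathbb{D}_y^\times$ only by the factor $w^{d_x \langle \theta_x, \chi \rangle}$, comparing charts yields the isomorphism
\[
L^\chi_{\boldsymbol\rho} \;\cong\; \pi^* L^\chi_{\boldsymbol\theta} \;\otimes\; \mathcal{O}_Y\!\left(\sum_{y \in R} d_{\pi(y)}\, \langle \theta_{\pi(y)},\, \chi \rangle \cdot y\right).
\]

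To finish I take degrees. Since $\pi \colon Y \to X$ is finite of degree $d$, one has $\deg(\pi^* L^\chi_{\boldsymbol\theta}) = d \cdot \deg(L^\chi_{\boldsymbol\theta})$. Since $\Gamma$ acts transitively on the fibers of $\pi$, each $x \in D$ has exactly $d/d_x$ preimages in $R$, so the correction divisor contributes
\[
\sum_{y \in R} d_{\pi(y)}\, \langle \theta_{\pi(y)}, \chi \rangle \;=\; \sum_{x \in D} \tfrac{d}{d_x} \cdot d_x\, \langle \theta_x, \chi \rangle \;=\; d\,\langle \boldsymbol\theta, \chi \rangle.
\]
Adding the two contributions gives $\deg(L^\chi_{\boldsymbol\rho}) = d\bigl(\deg L^\chi_{\boldsymbol\theta} + \langle \boldsymbol\theta, \chi \rangle\bigr) = d \cdot parh\deg E(\varsigma, \chi)$, which is the claimed identity. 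The step requiring the most care is the first one, namely pinning down the explicit formula $\Theta_y(w) = \Delta(w)\,\varsigma(w^{d_x})$ by unpacking Theorem \ref{207} at the level of the parabolic reduction; once this is in hand, the remaining degree calculation is a routine bookkeeping exercise controlled by the ramification structure of $\pi$.
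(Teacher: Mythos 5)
Your proof is correct and follows essentially the same route as the paper's: the factorization $\Theta_y(w)=\Delta(w)\,\varsigma(w^{d_x})$ is exactly the paper's decomposition of $F$ into the $\Gamma$-invariant bundle $F'$ (which descends to $E$, accounting for the $\pi^*L^{\chi}_{\boldsymbol\theta}$ term and the factor $d$) plus the $\Delta$-twist (accounting for the correction divisor, i.e.\ the $d\,\langle\boldsymbol\theta,\chi\rangle$ term). Your version is, if anything, slightly more careful, since you record the isomorphism of line bundles explicitly and carry out the orbit count $d/d_x$ for several marked points, whereas the paper reduces to a single point.
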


\begin{proof}
	For simplicity, we assume that $D=\{x\}$ and $R=\{y\}$ are singletons; the proof when $D$ and $R$ have finitely many points is entirely analogous. The stabilizer group of $y \in R$ is $\Gamma$, which is a cyclic group of order $d$. Recalling the construction of $F'$, we define a new transition function $\Theta'_y(w)$ such that
	\begin{align*}
	\Theta'_y(w):=\Delta(w)^{-1} \Theta_y(w),
	\end{align*}
	where $\Theta_y: F_y|_{\mathbb{D}^{\times}_y} \rightarrow F_0|_{\mathbb{D}^{\times}_y}$ is the transition function of $F$ and $\Delta$ is the matrix in $T(\mathbb{D}_y)$ such that $\Delta(\gamma w)=\rho(\gamma)\Delta(w)$. Therefore, $\Delta$ contributes to the degree $\sum_{r \in R} d\cdot r(\theta_x)$. Therefore, we have
	\begin{align*}
	\deg F(\sigma,\chi)=\deg F'(\sigma',\chi)+ d \cdot  \langle \theta_x, \chi \rangle.
	\end{align*}
	Since $F'$ is a $\Gamma$-invariant $G$-bundle, we have
	\begin{align*}
	d \cdot \deg E(\varsigma,\chi)= \deg F'(\sigma',\chi).
	\end{align*}
	By adding the term $d \cdot  \langle \theta_x, \chi \rangle$ on both sides of the equation, we finally get
	\begin{align*}
	\deg F(\sigma,\chi) = d \cdot parh\deg E(\varsigma,\chi),
	\end{align*}
	which is the desired identity.
\end{proof}

\begin{thm}\label{415}
	Let $E$ be a parahoric $\mathcal{G}_{\boldsymbol\theta}$-torsor on $X$, and let $F$ be the corresponding $(\Gamma,G)$-bundle on $Y$. Then, $E$ is $R$-stable (resp. $R$-semistable) if and only if $F$ is $R$-stable (resp. $R$-semistable).
\end{thm}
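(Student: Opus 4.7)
The plan is to assemble the four correspondences explicitly listed immediately before Lemma \ref{413}, since each of them has already been established or is essentially tautological. The proof is thus a bookkeeping exercise combining Lemmas \ref{402}, \ref{413}, and \ref{414}.

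First, I would fix a pair $(E, F)$ related by the correspondence of Theorem \ref{207}, and observe that the four conditions to check are:
(i) the map $P \mapsto \mathcal{P}_{\boldsymbol\theta}$ provides a bijection between proper parabolic subgroups of $G$ and the parahoric subgroup schemes $\mathcal{P}_{\boldsymbol\theta} \subseteq \mathcal{G}_{\boldsymbol\theta}$ arising from Definitions in \S 4.1;
(ii) by Lemma \ref{402}, $\mathrm{Hom}(\mathcal{P}_{\boldsymbol\theta}, \mathbb{G}_m) \cong \mathrm{Hom}(P, \mathbb{C}^*)$, and under this identification the anti-dominance condition and the triviality-on-center condition coincide, since both are properties of the induced character $\chi \colon P \to \mathbb{C}^*$ and the center of $\mathcal{P}_{\boldsymbol\theta}$ maps to the center of $P$;
(iii) by Lemma \ref{413}, reductions $\varsigma \colon X \to E/\mathcal{P}_{\boldsymbol\theta}$ are in bijection with $\Gamma$-equivariant reductions $\sigma \colon Y \to F/P$;
(iv) by Lemma \ref{414}, $d \cdot parh\deg E(\varsigma, \chi) = \deg F(\sigma, \chi)$, and since $d$ is a positive integer the sign of the two degrees coincides.

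Second, the implication itself: suppose $E$ is $R$-stable (resp.\ $R$-semistable). Given any proper parabolic $P \subseteq G$, any $\Gamma$-equivariant reduction $\sigma \colon Y \to F/P$, and any nontrivial anti-dominant character $\chi \colon P \to \mathbb{C}^*$ trivial on the center of $P$, the correspondences (i)--(iii) produce a triple $(P, \varsigma, \chi)$ satisfying the hypotheses of Definition \ref{403}. By stability of $E$, $parh\deg E(\varsigma, \chi) > 0$ (resp.\ $\geq 0$), and then (iv) yields $\deg F(\sigma, \chi) > 0$ (resp.\ $\geq 0$), proving $R$-stability (resp.\ $R$-semistability) of $F$. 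The reverse implication is obtained in exactly the same way by reading the correspondences backwards.

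The main obstacle is really packaged into Lemma \ref{413}: one must be sure that \emph{every} $\Gamma$-equivariant reduction of $F$ to $P$ arises from a reduction of $E$ to $\mathcal{P}_{\boldsymbol\theta}$, so that testing stability against all reductions on one side is equivalent to testing against all reductions on the other side. Once that bijection is in hand, there is nothing else to check, because the anti-dominance and center-triviality conditions are characterized purely in terms of the finite-dimensional group $P$ via Lemma \ref{402}, and the numerical condition propagates through Lemma \ref{414} without change of sign.
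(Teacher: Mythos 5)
Your proposal is correct and follows essentially the same route as the paper's own proof: both reduce the statement to the combination of Lemma \ref{402} (characters), Lemma \ref{413} (reductions), and Lemma \ref{414} (the degree identity $d \cdot parh\deg E(\varsigma,\chi)=\deg F(\sigma,\chi)$), with one direction spelled out and the other obtained by symmetry. Your additional remark that the bijectivity of the reduction correspondence in Lemma \ref{413} is the real load-bearing step is accurate and consistent with how the paper uses it.
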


\begin{proof}
	We will prove that if a $(\Gamma,G)$-bundle $F$ is $R$-stable, then the corresponding parahoric $\mathcal{G}_{\boldsymbol\theta}$-torsor $E$ is also $R$-stable. The other direction can be proved similarly. By Definition \ref{403}, we have to show that for every proper parabolic subgroup $P \subseteq G$, every nontrivial anti-dominant character $\chi: \mathcal{P}_{\boldsymbol\theta} \rightarrow \mathbb{G}_m$ and every reduction of structure group $\varsigma: X \rightarrow E/ \mathcal{P}_{\boldsymbol\theta}$, we have
	\begin{align*}
	parh\deg E(\varsigma,\chi) > 0.
	\end{align*}
	From Lemma \ref{402}, the character $\chi$ can be considered as an anti-dominant character $P \rightarrow \mathbb{C}^*$, while Lemma \ref{413} provides that the reduction of structure group $\varsigma$ corresponds to a $\Gamma$-equivariant reduction $\sigma: Y \rightarrow F/P$. Suppose that $F$ is $R$-stable, which means that
	\begin{align*}
	\deg F(\sigma,\chi)>0.
	\end{align*}
	By Lemma \ref{414}, we have
	\begin{align*}
	parh\deg E(\varsigma,\chi) = \frac{1}{d} \deg F(\sigma,\chi)>0,
	\end{align*}
	and so $E$ is $R$-stable.
\end{proof}

\begin{thm}\label{416}
	Let $(E,\varphi)$ be a logahoric $\mathcal{G}_{\boldsymbol\theta}$-Higgs torsor on $X$, and let $(F,\phi)$ be the corresponding logarithmic $(\Gamma,G)$-Higgs bundle on $Y$. Then, $(E,\varphi)$ is $R$-stable (resp. $R$-semistable) if and only if $(F,\phi)$ is $R$-stable (resp. $R$-semistable).
\end{thm}

\begin{proof}
	The only thing we have to show is that a reduction $\varsigma: X \rightarrow E/\mathcal{P}_{\boldsymbol\theta}$ is compatible with $\varphi$ if and only if the corresponding $\Gamma$-equivariant reduction $\sigma: Y \rightarrow F/P$ is compatible with $\phi$. We still prove one direction, that if $\sigma$ is compatible with $\phi$, then $\varsigma$ is compatible with $\varphi$. The other direction can be proved similarly.
	
	From the assumption, there is a lifting $\phi':Y \rightarrow F_\sigma(\mathfrak{p}) \otimes K_Y(R)$, such that the following diagram commutes
	\begin{center}
		\begin{tikzcd}
		& F_\sigma(\mathfrak{p}) \otimes K_Y(R) \arrow[d, hook] \\
		Y \arrow[r, "\phi"] \arrow[ur, "\phi'", dotted] & F(\mathfrak{g}) \otimes K_Y(R).
		\end{tikzcd}
	\end{center}
	Note that $F_\sigma$ is a $(\Gamma,P)$-bundle, for which the $\Gamma$-action is induced from that on $F$. Therefore, $\phi'$ is also $\Gamma$-equivariant, that is,
	\begin{align*}
	\phi'(\gamma w)=\rho(\gamma) \phi'(w) \rho(\gamma)^{-1}.
	\end{align*}
	By \S 3.3, the $\Gamma$-equivariant logarithmic Higgs field $\phi'$ will correspond to a logarithmic Higgs field $\varphi': X \rightarrow E_\varsigma(\mathfrak{p}) \otimes K_X(D)$, where $\varphi'$ is a lifting of $\varphi$. Clearly, this correspondence is a one-to-one correspondence.
\end{proof}


\section{$R_\mu$-stability Condition}\label{sect_R_mu_stab}

In this section, we first study logahoric ${\rm GL}_n$-Higgs torsors in detail as an important example. In \S 5.2, we introduce the $R_\mu$-stability condition for logahoric ${\rm GL}_n$-Higgs torsors. Compared to the $R$-stability condition from Definition \ref{403}, we do not require that the anti-dominant character acts trivially on the center (see Definition \ref{504}). We show that for a specific $\mu$, the notion of $R_\mu$-stability coincides with the stability condition for a parabolic Higgs bundle as considered by Simpson in \cite{Simp} (see Proposition \ref{505}). In \S 5.3, we generalize the $R_\mu$-stability condition to the case of arbitrary complex reductive groups, and prove that we can find a canonical $\mu$ such that $R$-stability is equivalent to $R_\mu$-stability (Proposition \ref{alphaexst}).

\subsection{Correspondence}
Let $T \subset {\rm GL}_n$ be the subgroup of diagonal matrices, which is a maximal torus in ${\rm GL}_n$. Let $\theta$ be a rational weight in $Y(T) \otimes_{\mathbb{Z}}\mathbb{Q}$. Regarding $\theta$ as an element in $\mathfrak{t}_{\mathbb{Q}}$, we have $\theta=\sum_{i=1}^n \frac{a_i}{d}t_i$, where $a_{i}$ and $d$ are integers and $\{t_i\}_{1 \leq i \leq n}$ is a basis of $\mathfrak{t}_{\mathbb{Q}}$. Equivalently, $\theta$ can be regarded as a diagonal matrix
\begin{align*}
\theta=
\begin{pmatrix}
\frac{a_1}{d} &  & \\
& \ddots & \\
& & \frac{a_n}{d}
\end{pmatrix}.
\end{align*}
Furthermore, we assume that $a_i \leq a_j$, if $i \leq j$. In the case when $\theta$ is small, one has that $0 \leq a_i < d$. Denote by ${\rm GL}_{\theta}$ the parahoric subgroup of ${\rm GL}_n(K)$ defined as
\begin{align*}
{\rm GL}_\theta:= \langle T(A), U_{ij}(z^{\lceil a_j-a_i \rceil} A) \rangle,
\end{align*}
where $U_{ij}$ is the unipotent group corresponding to the $(i,j)$-entry. As an example, for $n=2$ and $\theta=0 \cdot t_1+\frac{1}{2}t_2$, the matrix in ${\rm GL}_{\theta}$ can be written as
\begin{align*}
\begin{pmatrix}
A & A \\
zA & A
\end{pmatrix}.
\end{align*}

The corresponding representation $\rho:\Gamma \rightarrow T$ is given by
\begin{align*}
\rho(\gamma)=
\begin{pmatrix}
\xi^{a_1} & & \\
& \ddots &\\
& & \xi^{a_n}
\end{pmatrix},
\end{align*}
where $\xi=e^{\frac{2\pi i}{d}}$. In the local coordinate $w$, we define the following matrix
\begin{align*}
\Delta(w):=w^{\theta}=
\begin{pmatrix}
w^{a_1} & & \\
& \ddots &\\
& & w^{a_n}
\end{pmatrix}.
\end{align*}
Clearly, we have
\begin{align*}
\Delta(\gamma w)=\rho(\gamma) \Delta(w).
\end{align*}

Let $F$ be a $(\Gamma,{\rm GL}_n)$-bundle of type $\rho$ over $\mathbb{D}_y$, and denote by $\mathbb{U}:={\rm Aut}_{(\Gamma,{\rm GL}_n)}(F)$ the automorphisms of $F$. We take an element $\sigma \in \mathbb{U}$. Note that in the ${\rm GL}_n$-case, we can consider $\sigma=(\sigma_{ij})$ as a matrix. Define $\varsigma:=\Delta^{-1}\sigma\Delta$. Then, we have
\begin{align*}
\varsigma_{ij}(w)=\sigma_{ij}(w) w^{-(a_i-a_j)},
\end{align*}
and it is easy to check that
\begin{align*}
\varsigma(\gamma w)=\varsigma(w),
\end{align*}
which means that $\varsigma$ is $\Gamma$-invariant. Substituting $z$ by $w^d$, then $\varsigma_{ij}(z)$ descends to a meromorphic function on $\mathbb{D}_x= \Spec(A)$. The order of the pole of $\varsigma_{ij}(z)$ is bounded by $\lceil \frac{a_j-a_i}{d} \rceil$. Therefore, we have $\varsigma(z) \in {\rm GL}_\theta$. The same argument as for Theorem \ref{207}, provides the following corollary:
\begin{cor}\label{501}
	With respect to the above notation, there is an isomorphism
	\begin{align*}
	{\rm Bun}^{\boldsymbol\rho}(Y,\Gamma) \cong {\rm Bun}(X,{\rm GL}_{\boldsymbol\theta})
	\end{align*}
	as algebraic stacks.
\end{cor}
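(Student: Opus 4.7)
The plan is to recognize that this corollary is essentially an immediate specialization of Theorem \ref{207} to the case $G = {\rm GL}_n$, but with the added value that in this matrix setting every abstract step becomes an explicit calculation with matrix entries. Since ${\rm GL}_n$ is a connected complex reductive group, the general correspondence applies verbatim; what the statement really asks for is a verification that the concrete matrix description developed in \S 5.1 reproduces the required local identification, after which gluing finishes the job.

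First I would confirm the local correspondence $\mathbb{U} \cong {\rm GL}_\theta$, which is already essentially done in the paragraph preceding the corollary: given an automorphism $\sigma = (\sigma_{ij}) \in \mathbb{U}$, the conjugate $\varsigma := \Delta^{-1}\sigma\Delta$ satisfies $\varsigma_{ij}(w) = \sigma_{ij}(w) w^{-(a_i - a_j)}$, is visibly $\Gamma$-invariant, and after the substitution $z = w^d$ descends to an element of $\mathbb{D}_x$ whose $(i,j)$-entry is a meromorphic function with pole of order at most $\lceil (a_j - a_i)/d \rceil$; this is exactly the local condition defining ${\rm GL}_\theta$. Conversely, an element of ${\rm GL}_\theta$ lifts back via $\sigma = \Delta \varsigma \Delta^{-1}$ to an element of $\mathbb{U}$, so this is a bijection, and in fact an isomorphism of group-valued functors.

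Next I would globalize. For each ramification point $y \in R$ with image $x = \pi(y) \in D$, the local isomorphism above gives a bijection between $(\Gamma, {\rm GL})$-torsors of type $\rho_y$ on $\mathbb{D}_y$ and ${\rm GL}_{\theta_x}$-torsors on $\mathbb{D}_x$. Away from the ramification divisor, a $\Gamma$-equivariant ${\rm GL}_n$-bundle on $Y \setminus R$ descends to an ordinary ${\rm GL}_n$-bundle on $X \setminus D$ by taking $\Gamma$-invariants, and $\mathcal{G}_{\boldsymbol\theta}|_{X\setminus D} = {\rm GL}_n \times (X\setminus D)$ by construction. Assembling these local identifications via the transition data $\Theta_y \mapsto \Theta'_y = \Delta^{-1}\Theta_y$ exactly as in \S 2.3 produces a ${\rm GL}_{\boldsymbol\theta}$-torsor on $X$ from a $(\Gamma, {\rm GL})$-bundle of type $\boldsymbol\rho$ on $Y$, and the process is reversible and functorial in families.

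Finally, for the stack-theoretic upgrade I would invoke Lemmas \ref{203} and \ref{206}, which guarantee that both sides are algebraic stacks locally of finite type; since the bijection just constructed is natural in the base scheme $S$, it upgrades automatically to an equivalence of groupoid-valued functors and hence to an isomorphism of algebraic stacks. The only potential subtlety, and the one step I would spell out carefully rather than rush, is the compatibility of the local choice of $\Delta = w^\theta$ with the gluing: different choices of $\Delta$ (differing by an integer shift $k \cdot I$) change the parahoric from $\mathcal{G}_{\boldsymbol\theta}$ to $\mathcal{G}_{\boldsymbol\theta'}$, but Proposition \ref{208} already shows that the resulting stacks are canonically isomorphic, so this ambiguity does not affect the conclusion.
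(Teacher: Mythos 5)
Your proposal is correct and follows essentially the same route as the paper: the explicit matrix computation $\varsigma_{ij}(w)=\sigma_{ij}(w)w^{-(a_i-a_j)}$ establishing the local isomorphism $\mathbb{U}\cong{\rm GL}_\theta$ is exactly the content of the paragraph preceding the corollary, after which the paper simply invokes ``the same argument as for Theorem \ref{207}'' to glue and upgrade to stacks, just as you do. Your closing remark about the ambiguity in the choice of $\Delta$ and Proposition \ref{208} is a sensible extra precaution but not part of the paper's argument, which fixes $\Delta(w)=w^{\theta}$ once and for all.
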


Now let $\phi=(\phi_{ij})$ be an element in $\mathfrak{gl}(\mathbb{C}[[w]]) \cdot \frac{dw}{w}$, where $\phi_{ij}(w)$ is a holomorphic function corresponding to the $(i,j)$-entry. The element $\phi$ can be considered as a logarithmic Higgs field over $\mathbb{D}_y$. Suppose that $\phi$ is $\Gamma$-equivariant, that is, $\phi(\gamma w)=\rho(\gamma) \phi(w) \rho^{-1}(\gamma)$. Define $\varphi:=\Delta^{-1} \phi \Delta$. Then, we have
\begin{align*}
\varphi(\gamma w)=\varphi(w),
\end{align*}
which implies that $\varphi$ is $\Gamma$-invariant. Therefore, $\varphi$ can be descended to a section $\mathbb{D}_x \rightarrow \mathfrak{gl}_{\theta} \otimes \Omega_{\mathbb{D}_x}^1(x)$ by substituting $z$ by $=w^d$, where $\mathfrak{gl}_{\theta}$ is the Lie algebra of ${\rm GL}_{\theta}$. For each entry, we have
\begin{align*}
\varphi(z)_{ij}= \phi(z)_{ij} z^{-(a_i-a_j)}.
\end{align*}

Globally, let $E$ be a parahoric ${\rm GL}_{\boldsymbol\theta}$-torsor on $X$, where $\boldsymbol\theta=\{\theta_x, x \in D\}$ is a collection of rational weights over points in the divisor $D$. Let $F$ be the corresponding $\Gamma$-equivariant bundle of type $\boldsymbol\rho=\{\rho_y, y \in R\}$ on $Y$, where $R$ is the pre-image of $D$. Then, the above discussion implies that there is a one-to-one correspondence between $\Gamma$-equivariant logarithmic Higgs fields of $F$ and logarithmic Higgs fields of $E$,
\begin{align*}
H^0(Y, \mathcal{E}nd(F) \otimes K_Y(R))^{\Gamma} \cong H^0(X, E(\mathfrak{g}) \otimes K_X(D)).
\end{align*}

As an application of Theorem \ref{305}, we have the following equivalence.
\begin{cor}\label{502}
	One has
	\begin{align*}
	\mathcal{M}_H^{\boldsymbol\rho}(Y,\Gamma) \cong \mathcal{M}_H(X,{\rm GL}_{\boldsymbol\theta})
	\end{align*}
	as algebraic stacks.
\end{cor}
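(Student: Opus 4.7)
The plan is to obtain Corollary \ref{502} as the ${\rm GL}_n$ specialization of Theorem \ref{305}. Since ${\rm GL}_n$ is a connected complex reductive group, Theorem \ref{305} applies verbatim, and all that remains is to match up the intrinsic ${\rm GL}_n$ data with the general $G$-data. The proof naturally splits into a bundle layer and a Higgs-field layer.

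For the bundle layer, Corollary \ref{501} already provides an isomorphism $\mathrm{Bun}^{\boldsymbol\rho}(Y,\Gamma) \cong \mathrm{Bun}(X,{\rm GL}_{\boldsymbol\theta})$ as algebraic stacks. Both Higgs stacks $\mathcal{M}_H^{\boldsymbol\rho}(Y,\Gamma)$ and $\mathcal{M}_H(X,{\rm GL}_{\boldsymbol\theta})$ fit into representable forgetful morphisms over these underlying bundle stacks, with fibers $H^0(Y,\mathcal{E}nd(F)\otimes K_Y(R))^{\Gamma}$ and $H^0(X,\mathrm{Ad}(E)\otimes K_X(D))$ respectively (cf.\ the proof of Theorem \ref{305}). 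So the corollary reduces to producing a natural, base-change-compatible isomorphism of these fiber modules, lifted to a morphism of total stacks.

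For the Higgs-field layer, I would invoke the explicit local $\Delta$-conjugation $\varphi = \Delta^{-1}\phi\Delta$ worked out in \S 5.1, with $\Delta(w)=w^{\theta}$ the diagonal matrix $\mathrm{diag}(w^{a_1},\dots,w^{a_n})$. The entry-wise formula $\varphi(z)_{ij} = \phi(z)_{ij}\, z^{-(a_i-a_j)}$ makes the bijection between $\Gamma$-equivariant local tame Higgs fields on $\mathbb{D}_y$ and parahoric tame Higgs fields on $\mathbb{D}_x$ completely transparent, and verifies that the pole order of each entry lies in the parahoric bound $\lceil (a_j-a_i)/d\rceil$. Globally, gluing these local isomorphisms against the covering $\pi\colon Y\to X$ produces the stated identification $H^0(Y,\mathcal{E}nd(F)\otimes K_Y(R))^{\Gamma} \cong H^0(X,\mathrm{Ad}(E)\otimes K_X(D))$, already recorded at the end of \S 5.1.

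Combining the two layers gives a morphism of algebraic stacks in each direction; functoriality in families follows from the fact that both $\Gamma$-invariants and the local $\Delta$-conjugation are natural under base change, so the bijection is compatible with pullbacks of test schemes. There is essentially no main obstacle here: the only bookkeeping is to observe that the general $\Delta \in T(B)$ used in the proof of Theorem \ref{305} specializes in the ${\rm GL}_n$ case to the concrete diagonal matrix above, after which Theorem \ref{305} supplies the conclusion directly. In particular, no new algebraic-stack argument beyond Lemmas \ref{302} and \ref{304} is required.
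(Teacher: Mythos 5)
Your proposal is correct and follows essentially the same route as the paper: Corollary \ref{502} is stated there precisely as an application of Theorem \ref{305} to $G={\rm GL}_n$, with the explicit $\Delta$-conjugation computation of \S 5.1 supplying the identification $H^0(Y,\mathcal{E}nd(F)\otimes \Omega_Y^1(R))^{\Gamma}\cong H^0(X,{\rm Ad}(E)\otimes\Omega_X^1(D))$ on the fibers of the forgetful morphisms, exactly as you describe.
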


\begin{rem}\label{503}
	Mehta and Seshadri introduced parabolic bundles to study $\Gamma$-equivariant bundles \cite{MS}. In fact, there is a correspondence between $\Gamma$-equivariant bundles on $Y$ and parabolic bundles on $X$ \cite{Bis1997,NaSt}. Furthermore, this equivalence was extended for strongly parabolic Higgs bundles with rank two \cite{NaSt}, and more generally in \cite{BMW}. For primary reference on parabolic Higgs bundles, we refer the reader to \cite{BoYo, GGM, Simp}; examples of parabolic $G$-Higgs bundles for complex groups $G$ as special parabolic Higgs bundle data are demonstrated in \cite{KSZ211,KSZ212}.
\end{rem}

\subsection{$R_\mu$-Stability Condition}
In this subsection, we define a new stability condition called the $R_\mu$-stability condition, and we prove that the $R_{\mu}$-stability condition
of a parahoric torsor in the case ${\rm GL}_n$ is equivalent to the stability condition of the corresponding parabolic bundle under the condition that all weights are small. Note that the condition that weights are small is not a strong condition because any parahoric subgroup ${\rm GL}_{\boldsymbol\theta}$ of ${\rm GL}_n$ is conjugate to some ${\rm GL}_{\boldsymbol\theta_0}$, for $\boldsymbol\theta_0$ a collection of small weights. 

Let $\boldsymbol\theta$ be a set of rational weights over the points in $D \subseteq X$. Let $V$ be a free vector bundle on $X$, that is, let $V \cong X \times \mathbb{C}^n$. Viewing $V$ as a sheaf, there is a natural ${\rm GL}_{\boldsymbol\theta}$-action on $V$. Let $P \subset {\rm GL}_n$ be a parabolic group. Denote by $\mathcal{P}_{\boldsymbol\theta}$ the corresponding parahoric group scheme. Let $\kappa \in {\rm Hom}(\mathcal{P}_{\boldsymbol\theta},\mathbb{G}_m)$ be a character. By Lemma \ref{402}, we know that it is equivalent to a character $\chi: P \rightarrow \mathbb{C}^*$. Let $E$ be a ${\rm GL}_{\boldsymbol\theta}$-torsor on $X$. Let $\varsigma: X \rightarrow E/ \mathcal{P}_{\boldsymbol\theta}$ be a reduction of structure group.

For the calculation below, we assume that the divisor $D=\{x\}$ consists of a single point and so we shall denote $\boldsymbol\theta=\theta_x$ for simplicity; the proof for finitely many points in $D$ is then analogous. There is a natural ${\rm GL}_n$-action on $\mathbb{C}^n$, which induces a $P$-action on this vector space. Note that the differentiation $d\chi$ is an element in ${\rm Hom}(\mathfrak{p},\mathbb{C}) \subseteq {\rm Hom}(\mathfrak{t},\mathbb{C})$, where $\mathfrak{p}$ is the Lie algebra of $P$. Denote by $s_{\chi} \in \mathfrak{t}$ the dual of $d\chi$, and let $\lambda_1, \dots, \lambda_r$ be the eigenvalues of $s_{\chi}$. We assume that $\lambda_1 < \lambda_2 < \dots < \lambda_r$. Define
\begin{align*}
V_j:={\rm ker}(\lambda_j I - s_{\chi})
\end{align*}
to be the eigenspace of $\lambda_j$. In fact, we consider $V_j$ as a trivial vector bundle on $X$. Then, we have
\begin{align*}
\deg L(\varsigma,\chi)=\sum_{j=1}^r \lambda_j \deg \mathcal{V}_j,
\end{align*}
where $\mathcal{V}_j:=E \times_{{\rm GL}_{\theta_x}} V_j$. In fact, the reduction $\varsigma$ and the character $\chi$ determine sub-torsors (or subbundles) of $E$, and all sub-torsors of $E$ can be constructed in this way.

Suppose that $\theta_x: \mathbb{C}^* \rightarrow T$ is given by
\begin{align*}
z \rightarrow
\begin{pmatrix}
z^{\alpha_1} & & \\
& \ddots &\\
& & z^{\alpha_n}
\end{pmatrix}.
\end{align*}
Let $\alpha_1,\dots,\alpha_l$ be all (distinct) eigenvalues of the differentiation $d \theta_x$ such that $\alpha_1 > \alpha_2 > \dots > \alpha_l$, and denote by $A_i$ the eigenspace of $\alpha_i$, $1 \leq i \leq l$. Then, we have
\begin{align*}
\langle \theta_x, \chi \rangle=\sum_{i,j} \alpha_i \lambda_j \dim(A_i \cap V_j).
\end{align*}
For $B_i:=A_i \oplus \dots \oplus A_1$, we have a natural filtration of $V_j$ given by
\begin{align*}
B_l \cap V_j \supseteq B_{l-1} \cap V_j \supseteq \dots \supseteq B_1 \cap V_j \supseteq \{0\},
\end{align*}
together with a collection of weights
\begin{align*}
\alpha_l \leq \alpha_{l-1} \leq \dots \leq \alpha_1.
\end{align*}
This defines a parabolic structure of $\mathcal{V}_j$ on the fiber of the point $x$.

Given the discussion above, we now see
\begin{equation*}\label{ast}\tag{$\ast$}
\begin{split}
parh \deg E(\varsigma,\chi)& =\sum_{j=1}^r \lambda_j \deg \mathcal{V}_j + \sum_{i,j} \alpha_i \lambda_j \dim(A_i \cap V_j)\\
& = \sum_{j=1}^r \lambda_j (\deg \mathcal{V}_j + \sum_{i=1}^l \alpha_i \dim(A_i \cap V_j))\\
& = \sum_{j=1}^r \lambda_j par \deg \mathcal{V}_j.
\end{split}
\end{equation*}
Therefore, the parahoric degree in the case of ${\rm GL}_n$ \textit{coincides with the parabolic degree} of the induced parabolic bundle.

We can find a unique rational weight $\varpi$ such that for any character $\chi$ defined as above, we have
\begin{align*}
\langle \varpi, \chi \rangle=\sum_{j=1}^r \lambda_j \dim(V_j).
\end{align*}

\begin{defn}\label{504}
	Given a rational number $\mu$, a parahoric ${\rm GL}_{\boldsymbol\theta}$-torsor $E$ is called \emph{$R_{\mu}$-stable} (resp. \emph{$R_{\mu}$-semistable}), if for
	\begin{itemize}
		\item any proper parabolic group $P \subseteq G$,
		\item any reduction of structure group $\varsigma: X \rightarrow E/\mathcal{P}_{\boldsymbol\theta}$,
		\item any nontrivial anti-dominant character $\chi: \mathcal{P}_{\boldsymbol\theta} \rightarrow \mathbb{G}_m$ (not necessarily trivial on the center $\mathfrak{z}$),
	\end{itemize}
	one has
	\begin{align*}
	parh \deg E(\varsigma,\chi)-\langle \mu \varpi, \chi \rangle > 0 \quad (\text{resp. } \geq 0).
	\end{align*}
\end{defn}

\noindent Note that this definition can be generalized to define stability in the case of any \textit{real reductive group}; we refer the reader to \cite{BGM} for more information.

Let $E$ be a parahoric ${\rm GL}_{\boldsymbol\theta}$-torsor. As was discussed in Remark \ref{503}, $E$ can be considered as a parabolic bundle, so we shall keep the same notation $E$ to refer to it. The following proposition relates the stability condition of a logahoric ${\rm GL}_{\boldsymbol\theta}$-Higgs torsor with the stability condition of Simpson from \cite{Simp} for parabolic Higgs bundles.

\begin{prop}\label{505}
	Let $\boldsymbol\theta$ be a collection of small weights. Let $E$ be a parahoric ${\rm GL}_{\boldsymbol\theta}$-torsor. Denote by $\mu:=\frac{par\deg E}{rk (E)}$ the parabolic slope of $E$ as a parabolic bundle. Then, $E$ is $R_\mu$-stable (resp. $R_{\mu}$-semistable) if and only if $E$ is stable (resp. semistable) as a parabolic bundle. Furthermore, let $(E,\varphi)$ be a logahoric ${\rm GL}_{\boldsymbol\theta}$-Higgs bundle. Then, $(E,\varphi)$ is $R_\mu$-stable (resp. $R_{\mu}$-semistable) if and only if $(E,\varphi)$ is stable (resp. semistable) as a parabolic Higgs bundle.
\end{prop}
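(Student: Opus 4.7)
The core of the argument is already encoded in identity $(\ast)$ of the excerpt: $parh\deg E(\varsigma,\chi)=\sum_{j=1}^r \lambda_j\, par\deg\mathcal{V}_j$, together with the defining property $\langle \varpi,\chi\rangle=\sum_{j=1}^r \lambda_j \dim V_j=\sum_{j=1}^r \lambda_j\, rk(\mathcal{V}_j)$ of $\varpi$. The plan is to subtract these and apply summation by parts. Setting $D_j := par\deg\mathcal{V}_j - \mu\, rk(\mathcal{V}_j)$, the choice $\mu = par\deg E / rk(E)$ forces $\sum_{j=1}^r D_j = par\deg E - \mu\, rk(E) = 0$, so Abel summation rewrites
\begin{align*}
parh\deg E(\varsigma,\chi) - \langle \mu\varpi,\chi\rangle \;=\; \sum_{j=1}^r \lambda_j D_j \;=\; \sum_{j=1}^{r-1}(\lambda_{j+1}-\lambda_j)\bigl(\mu\, rk(\mathcal{F}_j) - par\deg\mathcal{F}_j\bigr),
\end{align*}
where $\mathcal{F}_j:=\mathcal{V}_1\oplus\cdots\oplus\mathcal{V}_j$ is a proper parabolic subbundle of $E$ for $1\leq j \leq r-1$. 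The inherited parabolic weights on $\mathcal{F}_j$ are computed using the joint eigenspace decomposition of the commuting elements $s_\chi$ and $\theta_x$ of $\mathfrak{t}$, which makes $par\deg$ additive along the direct sum $\mathcal{V}_1\oplus\cdots\oplus\mathcal{V}_j$.

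Once this identity is in hand, both directions of the equivalence in the first (bundle) statement are immediate. All weight differences $\lambda_{j+1}-\lambda_j$ are strictly positive, so the left-hand side is $>0$ (resp.\ $\geq 0$) for every admissible pair $(\varsigma,\chi)$ if and only if every $\mathcal{F}_j$ arising in this way satisfies $par\deg\mathcal{F}_j/rk(\mathcal{F}_j)<\mu$ (resp.\ $\leq\mu$). Conversely, any proper parabolic subbundle $F\subsetneq E$ is realized as $\mathcal{F}_1$ via the reduction to the maximal parabolic stabilizing the flag $0\subset F\subset E$ combined with the two-weight anti-dominant character $(\lambda_1,\lambda_2)=(0,1)$, so testing against this single choice recovers the parabolic inequality $par\deg F/rk(F)<\mu$. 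The identity is additive over punctures, so the multi-puncture case reduces to summing both sides of $(\ast)$ over $x\in D$.

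For the Higgs extension (the ``Furthermore'' clause), the same Abel summation applies once we restrict to reductions compatible with $\varphi$ in the sense of Definition~\ref{405}. The compatibility condition is exactly that $\varphi$ preserves each eigen-subbundle $\mathcal{V}_j$, and hence each filtration piece $\mathcal{F}_j$, as a subsheaf of $E\otimes K_X(D)$; thus testing $R_\mu$-stability over $\varphi$-compatible reductions corresponds precisely to testing Simpson's parabolic-Higgs stability against $\varphi$-invariant parabolic subbundles. The main subtlety requiring bookkeeping is the convention for ``nontrivial anti-dominant character'': characters whose differential is a scalar multiple of the identity (i.e.\ factoring through $\det$) yield $r=1$ and contribute the trivial identity $0=0$, so they must be understood as imposing no constraint beyond the fixed slope $\mu$. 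Once this convention is pinned down, all four equivalences (bundle/Higgs, stable/semistable) follow uniformly from the single displayed identity above.
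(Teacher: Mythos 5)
Your argument is essentially the paper's own proof: the same Abel summation over the filtration $\mathcal{F}_j=\mathcal{V}_1\oplus\cdots\oplus\mathcal{V}_j$ (the paper's $\mathcal{W}_j$), with the choice $\mu=\frac{par\deg E}{rk(E)}$ annihilating the top term, the sign of $\lambda_{j+1}-\lambda_j$ converting positivity into the parabolic slope inequality, the observation that every parabolic subbundle arises from some reduction and character, and the same reduction of the Higgs case to $\varphi$-invariant subbundles. Your remark that anti-dominant characters with central differential contribute the trivial identity $0=0$ (so must be read as imposing no constraint in the strict case) is a point the paper's proof glosses over, but it is a refinement rather than a departure.
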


\begin{proof}
	We only give the proof for parahoric ${\rm GL}_{\boldsymbol\theta}$-torsors, and the proof for the case of logahoric ${\rm GL}_{\boldsymbol\theta}$-Higgs torsors is, in fact, the same. We follow the same notation as was used in this subsection, and still work for a single point $D=\{x\}$ to simplify exposition. Define
	\begin{align*}
	W_j:=V_j \oplus V_{j-1} \oplus \dots \oplus V_1, \quad B_i= A_i \oplus A_{i-1} \oplus\dots \oplus A_1,
	\end{align*}
	which are vector spaces or trivial bundles on $X$. Then, let
	\begin{align*}
	\mathcal{W}_j:=E \times_{{\rm GL}_{\theta_x}} W_j.
	\end{align*}
	Clearly, we then have
	\begin{align*}
	\deg \mathcal{W}_j= \deg \mathcal{V}_j + \dots + \deg \mathcal{V}_1.
	\end{align*}
	Therefore, the following equation applies
	\begin{align*}
	\sum_{j=1}^r \lambda_j \deg \mathcal{V}_j=\lambda_r \deg \mathcal{W}_r + \sum_{j=1}^{r-1}(\lambda_j - \lambda_{j+1}) \deg \mathcal{W}_{j}.
	\end{align*}
	Note also that $\mathcal{W}_j$ can be realized as a vector bundle equipped with a natural parabolic structure.\\
	Similar calculations then provide the formulas
	\begin{align*}
	\sum_{i,j} \alpha_i \lambda_j \dim(A_i \cap V_j) &= \sum_{i=1}^l \left( \lambda_r \dim(A_i \cap W_r)  + \sum_{j=1}^{r-1} (\lambda_j-\lambda_{j+1}) \dim(A_i \cap W_j)  \right)\\
	&= \lambda_r \sum_{i=1}^{l-1} (\alpha_i-\alpha_{i+1}) \dim(B_i) +  \sum_{i=1}^{l-1} \sum_{j=1}^{r-1} (\alpha_i -\alpha_{i+1})(\lambda_j-\lambda_{j+1}) \dim(B_i \cap W_j)
	\end{align*}
	and
	\begin{align*}
	\langle \mu \varpi, \chi \rangle=\mu \sum_{j=1}^{r} \lambda_j \dim(V_j) = \mu \lambda_r \dim(W_r)+ \mu \sum_{j=1}^{r-1} (\lambda_j - \lambda_{j+1}) \dim(W_j) .
	\end{align*}
	Therefore, we have
	\begin{align*}
	parh \deg E(\varsigma,\chi)-\langle \mu \varpi, \chi \rangle = &\lambda_r \deg \mathcal{W}_r + \sum_{j=1}^{r-1}(\lambda_j - \lambda_{j+1}) \deg \mathcal{W}_{j}\\
	+& \lambda_r \sum_{i=1}^{l-1} (\alpha_i-\alpha_{i+1}) \dim(B_i) +  \sum_{i=1}^{l-1} \sum_{j=1}^{r-1} (\alpha_i -\alpha_{i+1})(\lambda_j-\lambda_{j+1}) \dim(B_i \cap W_j)\\
	-& \left(\mu \lambda_r \dim(W_r)+ \mu \sum_{j=1}^{r-1} (\lambda_j - \lambda_{j+1} ) \dim(W_j) \right)\\
	=& \sum_{r=1}^{j-1}(\lambda_j - \lambda_{j+1})(\deg \mathcal{W}_j + \sum_{i=1}^{l-1} (\alpha_i - \alpha_{i+1}) \dim(B_i \cap W_j) -\mu \dim(W_j) )\\
	+& \lambda_r \left( \deg \mathcal{W}_r +\sum_{i=1}^{l-1}(\alpha_i-\alpha_{i+1})\dim(B_i)-\mu \dim(W_r)  \right)\\
	=&\sum_{r=1}^{j-1}(\lambda_j - \lambda_{j+1}) (par \deg \mathcal{W}_j - \mu \dim(W_j))+\lambda_r (par \deg\mathcal{W}_r - \mu \dim (W_r))\\
	=& \sum_{r=1}^{j-1}(\lambda_j - \lambda_{j+1}) (par \deg \mathcal{W}_j - \mu \dim(W_j)).
	\end{align*}
	Note that $\lambda_j-\lambda_{j+1} < 0$. We thus conclude that $parh \deg E(\varsigma,\chi)-\langle \mu \varpi, \chi \rangle \geq 0$ if and only if for any parabolic subbundle $\mathcal{W}$ of $E$, one has
	\begin{align*}
	\frac{par \deg \mathcal{W}}{ rk (\mathcal{W})} \leq \frac{par \deg E}{ rk(E)}.
	\end{align*}
	This exactly means that $E$ is semistable as a parabolic bundle.
\end{proof}

\begin{rem}\label{rem parab deg}
	The above discussion shows that the parabolic degree and rank (and therefore the Hilbert polynomial) of the corresponding parabolic bundles are uniquely determined by $\boldsymbol\theta$ and $\mu$. 
\end{rem}

From Proposition \ref{505}, weights $\boldsymbol\theta$ of a parahoric torsor contribute to weights in the corresponding parabolic bundles. More precisely, if $\theta_x=\sum_{i=1}^n \frac{a_i}{d} t_i$, the weights in the filtration of the corresponding parabolic bundle on $x$ are exactly given by $\frac{a_i}{d}$  satisfying
\begin{align*}
0 \leq \frac{a_n}{d} \leq \dots \leq \frac{a_1}{d} < 1,
\end{align*}
which is the classical definition of a parabolic bundle \cite{MS}. Now if we consider an arbitrary weight (not necessary to be small), with the same argument as above, a parahoric $\mathcal{G}_{\boldsymbol\theta}$-torsor corresponds to a filtered bundle with weights, of which the weights are rational numbers (not necessarily in $[0,1)$). Abusing terminology, we still call such filtered bundles with weights \emph{parabolic bundles}, and the \emph{degree of parabolic bundle} is still defined as the sum of the degree of the underlying bundle together with the sum of weights. Therefore, we have the following corollary:

\begin{cor}\label{cor parah=parab}
	Let $\boldsymbol\theta$ be a collection of weights. Let $(E,\varphi)$ be a logahoric ${\rm GL}_{\boldsymbol\theta}$-Higgs torsor. Denote by $\mu:=\frac{par\deg E}{rk (E)}$ the parabolic slope of $E$ as a parabolic bundle. Then, $(E,\varphi)$ is $R_\mu$-stable (resp. $R_{\mu}$-semistable) if and only if $E$ is stable (resp. semistable) as a parabolic Higgs bundle. 
\end{cor}

\subsection{$R_\mu$-stability Condition for General Reductive Groups}

In this section, we will define a stability condition for parahoric $\mathcal{G}_{\boldsymbol\theta}$-torsors for a general complex reductive group $G$. In particular, in the case when $G=\mathrm{GL}_n$ we will recover the $R_\mu$-stability condition of the previous subsection. We have similarly a definition of $R_\mu$-stability for general $G$ depending on a choice of $\mu\in\mathfrak{t}$.

\begin{defn}\label{Ralpha}
	Fixing an element $\mu\in \mathfrak{t}$ in the Lie algebra of a maximal torus,
	a parahoric $\mathcal{G}_{\boldsymbol\theta}$-torsor $E$ on $X$ is called \textit{$R_\mu$-stable} (resp. \textit{$R_\mu$-semistable}) if for
	\begin{itemize}
		\item any proper parabolic subgroup $P$ of $G$,
		\item any reduction $\varsigma:X\to E/\mathcal{P}_{\boldsymbol\theta}$,
		\item any nontrivial anti-dominant character $\chi: \mathcal{P}_{\boldsymbol\theta} \rightarrow \mathbb{G}_m$ (not necessarily trivial on the center $\mathfrak{z}$),
	\end{itemize}
	one has
	\[parh\deg E(\varsigma,\chi) -\langle \mu,\chi \rangle> 0\quad (\mbox{resp. }\ge 0).\]
\end{defn}

The relation between these two definitions is given by the following choice of $\mu$:
\begin{prop}\label{alphaexst}
	Let $E$ be a parahoric $\mathcal{G}_{\boldsymbol\theta}$-torsor. There exists a canonical choice of $\mu\in \mathfrak{z}$ in the center of $\mathfrak{t}$, depending on the topological type of $E$, such that $E$ is $R$-stable (resp. $R$-semistable) if and only if $E$ is $R_\mu$-stable (resp. $R_\mu$-semistable).
\end{prop}

\begin{proof}
	Since when $\chi$ is anti-dominant and trivial on the center $\mathfrak{z}$ there is nothing to prove, we only need to find the pairing of $\mu$ with $\chi$ nontrivial in $\mathfrak{z}^*$. We find a base of $\mathfrak{z}^*$, say given by $\chi_1,\ldots,\chi_n$. By abuse of notation we can also think of these elements as anti-dominant characters.
	
	Here we view $\chi_i: \mathcal{G}_{\boldsymbol\theta}\to \mathbb{G}_m$ and $\varsigma_0:X\to E/\mathcal{G}_{\boldsymbol\theta}$ the parabolic reduction when we choose parabolic subgroup to be $G$ itself. Therefore we may define $\mu$ by the action on $\chi_i$:
	\[\langle \mu, \chi_i\rangle:=parh\deg E(\varsigma_0,\chi_i).\]
	In this case, the parahoric degree is nothing but
	\[parh\deg E(\varsigma_0,\chi_i) = \deg L(\varsigma_0,\chi_i) + \langle \boldsymbol\theta,\chi_i\rangle,\]
	where $L(\varsigma_0,\chi_i)$ is the line bundle on $X$ we defined in \S 4.1, and thus the definition of $\mu$ depends only on the topology of $E$. Then for every combination $\chi=\sum a_i\chi_i + \delta$, where $\delta$ is an anti-dominant element that acts trivially on the center, we have
	\[
	parh\deg E(\varsigma,\chi)-\langle\mu,\chi\rangle =\sum_{i=1}^n a_i\left(parh\deg E(\varsigma, \chi_i)-\langle \mu,\chi_i \rangle \right)+parh\deg E(\varsigma,\delta).
	\]
	We will show that
	\begin{align*}
	parh\deg E(\varsigma, \chi_i)-\langle \mu,\chi_i \rangle=0,
	\end{align*}
	for each $i$. Thus the positivity of the number $parh\deg E(\varsigma,\chi)-\langle\mu,\chi\rangle$ depends only on the positivity of $parh\deg E(\varsigma,\delta)$. This will imply the equivalence of the stability (semistability) conditions.
	
	Thus the only thing remaining is that $parh\deg E(\varsigma_0,\chi_i)=parh\deg E(\varsigma,\chi_i)$ for every reduction $\varsigma: X\to E/ \mathcal{P}_{\boldsymbol\theta}$. This is because the characters $\chi_i:\mathcal{P}_{\boldsymbol\theta} \to\mathbb{G}_m$ coming from elements in $\mathfrak{z}$ can be lifted to the same ones $\chi_i: \mathcal{G}_{\boldsymbol\theta}\to\mathbb{G}_m$. Therefore, we have
	\begin{align*}
	L(\varsigma_0,\chi_i)\cong L(\varsigma,\chi_i),
	\end{align*}
	and thus
	\begin{align*}
	parh \deg E(\varsigma_0,\chi_i)=parh \deg E(\varsigma,\chi_i).
	\end{align*}
	This completes the proof.
\end{proof}

\begin{rem}
	When $G=\mathrm{GL}_n$, we may find the direct equivalence of $R_\mu$-stability and $R$-stability by Proposition \ref{alphaexst}. Note that we only have one generator $\chi_1$ of $\mathfrak{z}$ given by the diagonal matrix, which also satisfies $\langle \varpi,\chi_1 \rangle=n$ (see \S 5.2). With the same idea as in Proposition \ref{alphaexst}, we need to find the element $\mu \varpi$, where $\mu$ is a rational number, such that
	\begin{align*}
	\langle \mu \varpi,\chi_1 \rangle= parh\deg E(\varsigma_0,\chi_1).
	\end{align*}
	With the same calculation as in formula (\ref{ast}), we find
	\[
	parh\deg E(\varsigma_0,\chi_1)=par \mathrm{deg}(E).
	\]
	Therefore, $\mu=\frac{par \deg(E)}{n}$. This recovers the definition of $\mu$ in Proposition \ref{505}.
\end{rem}

\begin{defn}
	A parahoric $\mathcal{G}_{\boldsymbol\theta}$-torsor is of \emph{type $\mu$}, if $\mu$ is the element given in Proposition \ref{alphaexst}.
\end{defn}

Let $G_1$ and $G_2$ be two connected complex reductive groups with maximal torus $T_1$ and $T_2$ respectively. Denote by $\mathfrak{t}_1$ and $\mathfrak{t}_2$ the corresponding Lie algebras. Let $\boldsymbol\theta_1$ be a collection of weights for $G_1$, and $\mu_1$ an element in $\mathfrak{t}_1$. Given a homomorphism $\varrho: G_1 \rightarrow G_2$, denote by $\boldsymbol\theta_2$ the corresponding weights of $\boldsymbol\theta_1$ for $G_2$ and let $\mu_2$ be the corresponding element of $\mu_1$ in $\mathfrak{t}_2$.

\begin{prop}\label{prop G1 and G2}
	Given a homomorphism $\varrho: G_1 \rightarrow G_2$, suppose that $\varrho(Z_0(G_1)) \subseteq Z_0(G_2)$ and the induced homomorphism $G_1/Z_0(G_1) \rightarrow G_2/Z_0(G_2)$ is finite, where $Z_0(G_i)$ is the connected component of the center $Z(G_i)$ containing the identity. Let $E_1$ be a parahoric $(\mathcal{G}_1)_{\boldsymbol\theta_1}$-torsor of type $\mu_1$. The parahoric torsor $E_1$ is $R_{\mu_1}$-semistable if and only if the parahoric $(\mathcal{G}_2)_{\boldsymbol\theta_2}$-torsor $\varrho_* E_1$ is $R_{\mu_2}$-semistable.
\end{prop}

\begin{proof}
	By Theorem \ref{415}, it is equivalent to consider the statement for $(\Gamma,G)$-bundles. Then, the proof is the same as in \cite[Proposition 3.17]{Rama19961}.
\end{proof}

\begin{rem}\label{rem adj bundle}
	As a special case of the above proposition, we consider the adjoint representation $G \rightarrow {\rm GL}(\mathfrak{g})$. By Proposition \ref{alphaexst}, the element $\mu$ lies in the center $\mathfrak{z}$. Therefore, the corresponding topological element of $\mu$ for the adjoint bundle is zero, i.e. the trivial element. Although by Propositions \ref{505}, \ref{alphaexst} and \ref{prop G1 and G2} we know that a parahoric torsor $E$ is $R_\mu$-semistable if and only if $E(\mathfrak{g})$ is semistable (as a parabolic bundle), the information of $\mu$ is lost during this correspondence. In order to recover the lost data $\mu$, we note that there is a natural projection $G \rightarrow G/G' \times G/Z$, where $G'$ is the derived group and $Z$ is the center. The quotient group $G/G'$ is a torus and the morphism $G/Z={\rm Ad}(G) \hookrightarrow {\rm GL}(\mathfrak{g})$ is injective. Therefore, the lost data $\mu$ can be recovered from a unique element in $G/G'$. We apply this idea to give the construction of the moduli space in \S\ref{sect_moduli_space}.
\end{rem}

\section{Moduli Space of Logahoric Higgs Torsors}\label{sect_moduli_space}
We proceed next with the construction of the moduli space of $R$-semistable logahoric $\mathcal{G}_{\boldsymbol\theta}$-Higgs torsors (Theorem \ref{701}). Although we only give the construction of the moduli space in the case of Higgs bundles, our approach also works for logahoric connections (see Remark \ref{moduli of local system}).

\subsection{Main Result}
We define the moduli problem of $R$-semistable logahoric $\mathcal{G}_{\boldsymbol\theta}$-Higgs torsors of type $\mu$ on $X$
\begin{align*}
\mathcal{M}^{Rss}_H(X, \mathcal{G}_{\boldsymbol\theta},\mu): {\rm (Sch/\mathbb{C})}^{\rm op}  \rightarrow {\rm Sets}
\end{align*}
as follows. For each $\mathbb{C}$-scheme $S$, the set $\widetilde{\mathcal{M}}^{Rss}_H(X,\mathcal{G}_{\boldsymbol\theta},\mu)(S)$ is defined as the collection of pairs $(E,\varphi)$ up to isomorphism such that
\begin{itemize}
	\item $E$ is an $S$-flat family of parahoric $\mathcal{G}_{\boldsymbol\theta}$-torsors on $X$;
	\item $\phi: X_S \rightarrow E(\mathfrak{g}) \otimes \pi^*_X K_X(D)$ is a section, where $\pi_X: X_S \cong X \times S \rightarrow X$ is the natural projection;
	\item for each point $s \in S$, the restriction $(E|_{X \times s},\varphi|_{X \times s})$ is an $R$-semistable logahoric $\mathcal{G}_{\boldsymbol\theta}$-Higgs torsor of type $\mu$ on $X$.
\end{itemize}

The main theorem in this section is the following:
\begin{thm}\label{701}
	There exists a quasi-projective scheme $\mathfrak{M}^{Rss}_H(X,\mathcal{G}_{\boldsymbol\theta},\mu)$ as the moduli space for the moduli problem $\mathcal{M}^{Rss}_H(X,\mathcal{G}_{\boldsymbol\theta},\mu)$ of $R$-semistable logahoric $\mathcal{G}_{\boldsymbol\theta}$-Higgs torsors, and the geometric points of $\mathfrak{M}^{Rss}_H(X,\mathcal{G}_{\boldsymbol\theta},\mu)$ represent $S$-equivalence classes of $R$-semistable logahoric $\mathcal{G}_{\boldsymbol\theta}$-Higgs torsors of type $\mu$. Furthermore, there is an open subset $\mathfrak{M}^{Rs}_H(X,\mathcal{G}_{\boldsymbol\theta},\mu) \subseteq \mathfrak{M}^{Rss}_H(X,\mathcal{G}_{\boldsymbol\theta},\mu)$ parameterizing isomorphism classes of $R$-stable logahoric $\mathcal{G}_{\boldsymbol\theta}$-Higgs torsors of type $\mu$.
\end{thm}

To simplify terminology, we shall say \emph{moduli space of semistable objects} and omit the reference to $S$-equivalence classes. We prove the main theorem using the following steps for $R$-semistable logahoric Higgs torsors, and the statement for $R$-stable ones follows directly.
\begin{enumerate}
	\item There is a natural way to define the moduli problem $\mathcal{M}^{Rss}_{H,\boldsymbol\theta}(Y,\Gamma,G)$ of $R$-semistable logarithmic $(\Gamma,G)$-Higgs bundles of type $\boldsymbol\theta$ on $Y$. By Theorem \ref{305} and Theorem \ref{416}, we have
	\begin{align*}
	\mathcal{M}^{Rss}_H(X,\mathcal{G}_{\boldsymbol\theta}) \cong \mathcal{M}^{Rss}_{H,\boldsymbol\theta}(Y,\Gamma,G).
	\end{align*}
	\noindent Therefore, it is equivalent to construct the moduli space of $R$-semistable logarithmic $(\Gamma,G)$-Higgs bundles of type $\boldsymbol\theta$.
	Remember that we consider $X=Y/\Gamma$, and $X$ can be understood as the coarse moduli space of the stack $[Y/\Gamma]$. By definition, $\Gamma$-equivariant bundles on $Y$ are exactly bundles on the stack $[Y/\Gamma]$. Then, we have the following correspondence
	\begin{center}
		\begin{tikzcd}
		\text{$G$-Higgs bundles on $[Y/\Gamma]$} \arrow[r, leftrightarrow] &  \text{$(\Gamma,G)$-Higgs bundles on $Y$}.
		\end{tikzcd}
	\end{center}
	With respect to the above correspondence, it is enough to construct the moduli space of logarithmic $G$-Higgs bundles on $[Y/\Gamma]$.
	
	The above discussion shows that constructing the moduli space of $R$-semistable logahoric $\mathcal{G}_{\boldsymbol\theta}$-Higgs torsors on $X$ is equivalent to constructing the moduli space $\mathfrak{M}^{Rss}_{H,\boldsymbol\theta}([Y/\Gamma],G)$ of $R$-semistable logarithmic $G$-Higgs bundles on $[Y/\Gamma]$, and the following steps are devoted to the construction of the moduli space $\mathfrak{M}^{Rss}_{H,\boldsymbol\theta}([Y/\Gamma],G)$.
	
	\item A $(\Gamma,G)$-bundle $F$ is $R$-semistable if and only if $F$ is $R_\mu$-semistable, where $\mu$ is given in \S\ref{sect_R_mu_stab}. Considering the adjoint representation $G \rightarrow {\rm GL}(\mathfrak{g})$, the $\Gamma$-equivariant adjoint bundle $F(\mathfrak{g})$ is then semistable (see Remark \ref{rem adj bundle}), where we take $F(\mathfrak{g})$ as a parabolic bundle and the stability condition is given by the parabolic degree. In total, $F$ is $R$-semistable if the $\Gamma$-equivariant adjoint bundle $F(\mathfrak{g})$ is semistable. A similar statement for principal bundles is given by Ramanathan (see \cite[Corollary 3.18]{Rama19961} and \cite[Theorem 2.2]{FriMor1998}). Also, the above discussion can be generalized to logarithmic $(\Gamma,G)$-Higgs bundles directly. Now denote by $P$ the Hilbert polynomial of $F(\mathfrak{g})$. We only have to construct the moduli space $\mathfrak{M}^{ss}_H([Y/\Gamma],P)$ of semistable Higgs bundles on $[Y/\Gamma]$ with Hilbert polynomial $P$, where the stability condition is equivalent to the slope stability condition of  the corresponding parabolic bundles.
	
	\item Nironi in \cite{Nir} defined the $\mathcal{E}$-stability condition for sheaves on $[Y/\Gamma]$, where $\mathcal{E}$ is a generating sheaf. With a good choice of $\mathcal{E}$, the $\mathcal{E}$-stability condition for bundles on $[Y/\Gamma]$ is equivalent to the stability of $\Gamma$-equivariant bundles on $Y$, and therefore equivalent to the stability condition of the corresponding parabolic bundles on $X$. In total, the existence of the moduli space of $\mathcal{E}$-semistable logarithmic Higgs bundles on $[Y/\Gamma]$ will imply the existence of the moduli space $\mathfrak{M}^{ss}_H([Y/\Gamma],P)$ in the second step.
	
	Note, lastly, that the construction of the moduli space of Higgs bundles on smooth projective varieties (over $\mathbb{C}$) was first given by Simpson \cite{Simp2}, and this construction was generalized by the second author in \cite{Sun202003} to the case of (tame) projective Deligne--Mumford stacks (over any algebraically closed field); we refer the reader to the aforementioned articles for further information.
\end{enumerate}
\noindent The above discussion is the basic idea to construct the moduli space $\mathfrak{M}^{Rss}_H(X,\mathcal{G}_{\boldsymbol\theta},\mu)$.

In \S 6.2 below, we construct first the moduli space $\mathfrak{M}^{\mathcal{E}ss}_{H}([Y/\Gamma],P)$ of $\mathcal{E}$-semistable logarithmic Higgs bundles on $[Y/\Gamma]$ with a given (modified) Hilbert polynomial $P$. With a good choice of $\mathcal{E}$, the stability condition for logarithmic Higgs bundles on $[Y/\Gamma]$ coincides with the stability condition for parabolic Higgs bundles on $X$. Therefore, we obtain the moduli space $\mathfrak{M}^{ss}_{H}([Y/\Gamma],P)$ of semistable adjoint Higgs bundles $(F(\mathfrak{g}),\phi)$. In \S 6.3, we start with $\mathfrak{M}^{ss}_{H}([Y/\Gamma],P)$ and construct the moduli space $\mathfrak{M}^{Rss}_{H,\boldsymbol\theta}([Y/\Gamma], G,\mu)$ of $R$-semistable logarithmic $G$-Higgs bundles (of type $\boldsymbol\theta$ and $\mu$) on $[Y/\Gamma]$. As we discussed above, the moduli space $\mathfrak{M}^{Rss}_{H,\boldsymbol\theta}([Y/\Gamma], G ,\mu)$ is exactly the moduli space $\mathfrak{M}^{Rss}_{H}(X,\mathcal{G}_{\boldsymbol\theta},\mu)$ of $R$-semistable logahoric $\mathcal{G}_{\boldsymbol\theta}$-Higgs bundles of type $\mu$ on $X$. The approach follows from \cite{Rama19961,Rama19962}, and we also refer the reader to \cite{Hye} for more details.

\subsection{Moduli Space of Logarithmic Higgs Bundles on Quotient Stacks}
In this section, we give the construction of the moduli space $\mathfrak{M}^{\mathcal{E}ss}_H([Y/\Gamma])$ of $\mathcal{E}$-semistable logarithmic Higgs bundles on quotient stacks $[Y/\Gamma]$, where $\mathcal{E}$ is a generating sheaf.

Note that a Higgs field $\phi: F \rightarrow F \otimes \Omega^1_{\mathcal{X}}(R)$ is equivalent to a morphism ${\rm Sym}(T_{\mathcal{X}}(-R)) \rightarrow \mathcal{E}nd(F)$. Denote by $\Lambda={\rm Sym}(T_{\mathcal{X}}(-R))$ the corresponding sheaf of differential graded algebras (see \cite{Simp2,Sun202003}). Therefore, Higgs bundles are a special case of $\Lambda$-modules, and the moduli space of $\mathcal{E}$-semistable Higgs bundles (that is, ${\rm GL}$-Higgs bundles) $\mathfrak{M}^{\mathcal{E}ss}_H([Y/\Gamma])$ on $[Y/\Gamma]$ is constructed in the same way as the moduli space of $\Lambda$-modules. We will briefly review the construction of the moduli space of $\Lambda$-modules and refer the reader to \cite{Sun202003} for more details.

Let $\mathcal{X}:=[Y/\Gamma]$ be the quotient stack, and denote by $\pi: \mathcal{X} \rightarrow X$ the coarse moduli space. A  locally free sheaf $\mathcal{E}$ is a \emph{generating sheaf}, if for any coherent sheaf $F$ on $\mathcal{X}$, the morphism
\begin{align*}
\theta_{\mathcal{E}}(F) : \pi^* \pi_* \mathcal{H}om(\mathcal{E},F) \otimes \mathcal{E} \rightarrow F
\end{align*}
is surjective. By \cite[Proposition 5.2]{OlSt}, there exists a generating sheaf $\mathcal{E}$ for $\mathcal{X}$ in our case. A very important property of the generating sheaf is that the functor
\begin{align*}
F_{\mathcal{E}}: {\rm Coh}(\mathcal{X}) & \rightarrow {\rm Coh}(X)\\
F & \mapsto \pi_* \mathcal{H}om(\mathcal{E},F)
\end{align*}
induces a closed immersion of quot-schemes (see \cite[Lemma 6.2]{OlSt})
\begin{align*}
F_{\mathcal{E}}: {\rm Quot}(G,\mathcal{X},P) & \rightarrow {\rm Quot}(F_{\mathcal{E}}(G),X,P)\\
[G \rightarrow F] & \mapsto [F_{\mathcal{E}}(G) \rightarrow F_{\mathcal{E}}(F)],
\end{align*}
where $G$ is a coherent sheaf on $\mathcal{X}$ and $P$ is an integer polynomial as the ``Hilbert polynomial". This property implies that ${\rm Quot}(G,\mathcal{X},P)$ is a projective scheme. Therefore, we can construct the moduli space of coherent sheaves on $\mathcal{X}$ with respect to a ``good" stability condition. This ``good" stability condition is called \emph{$\mathcal{E}$-stability}. First, we define the \emph{modified Hilbert polynomial}. Let $F$ be a coherent sheaf on $\mathcal{X}$. The \emph{modified Hilbert polynomial} $P_{\mathcal{E}}(F,m)$ is defined as
\begin{align*}
P_{\mathcal{E}}(F,m)=\chi(\mathcal{X},F \otimes \mathcal{E}^{\vee} \otimes \pi^* \mathcal{O}_X(m)), \quad m \gg 0.
\end{align*}

\begin{defn}\label{702}
	A pure coherent sheaf $F$ on $\mathcal{X}$ is \emph{$\mathcal{E}$-semistable} (resp. \emph{$\mathcal{E}$-stable}), if for every proper subsheaf $F' \subseteq F$ we have
	\begin{align*}
	p_{\mathcal{E}}(F') \leq p_{\mathcal{E}}(F)\quad (\text{resp. } p_{\mathcal{E}}(F') < p_{\mathcal{E}}(F)),
	\end{align*}
	where $p_{\mathcal{E}}(\bullet)$ is the reduced modified Hilbert polynomial.
\end{defn}

Let $\Lambda$ be a sheaf of graded algebras on $\mathcal{X}$. A \emph{coherent $\Lambda$-sheaf $F$} is a coherent sheaf (with respect to the $\mathcal{O}_{\mathcal{X}}$-structure) on $\mathcal{X}$ together with a left $\Lambda$-action. A subsheaf $F' \subseteq F$ is a \emph{$\Lambda$-subsheaf}, if we have $\Lambda \otimes F' \subseteq F'$. There are several ways to understand ``an action of $\Lambda$". Usually an action of $\Lambda$ on $F$ means that we have a morphism
\begin{align*}
\Lambda \rightarrow \mathcal{E}nd(F).
\end{align*}
Equivalently, this morphism can be interpreted as
\begin{align*}
\Lambda \otimes F \rightarrow F.
\end{align*}
The above morphism induces a morphism ${\rm Gr}_1(\Lambda) \otimes F \rightarrow F$ naturally. If ${\rm Gr}_1(\Lambda)$ is a locally free sheaf, then it corresponds to a morphism $F \rightarrow F \otimes {\rm Gr}_1(\Lambda)^*$.

\begin{defn}\label{703}
	A $\Lambda$-sheaf $F$ is \emph{$\mathcal{E}$-semistable} (resp. \emph{$\mathcal{E}$-stable}), if $F$ is a pure coherent sheaf and for any $\Lambda$-subsheaf $F' \subseteq F$ with $0 < {\rm rk}(F') < {\rm rk}(F)$, we have
	\begin{align*}
	p_{\mathcal{E}}(F') \leq p_{\mathcal{E}}(F),\quad (\text{resp. }<).
	\end{align*}
\end{defn}

Now we are ready to construct the moduli space of $p$-semistable $\Lambda$-sheaves. Let $k$ be a positive integer. We consider the quot-scheme $Q_1:={\rm Quot}(\Lambda_k \otimes V \otimes G,\mathcal{X},P)$, which parameterizes quotients $[\Lambda_k \otimes V \otimes G \rightarrow F]$ such that
\begin{itemize}
	\item $P$ is an integer polynomial taken as the modified Hilbert polynomial;
	\item $V$ is a $\mathbb{C}$-vector space of dimension $P(N)$, where $N$ is a large enough positive integer;
	\item $G$ is $\pi^*\mathcal{O}_X(-N)$.
\end{itemize}
We construct the moduli space following the steps below.
\begin{enumerate}
	\item There exists a closed subscheme $Q_2 \subseteq Q_1$ such that any point $[\rho: \Lambda_k \otimes V \otimes G \rightarrow F]$ \emph{admits a factorization}. More precisely, the quotient $\rho$ has the following factorization
	\begin{center}
		\begin{tikzcd}
		\Lambda_k \otimes V \otimes G \arrow[rd, "1 \otimes \rho'"] \arrow[rr, "\rho"] &  & \mathcal{F}  \\
		& \Lambda_k \otimes F \arrow[ru,"\phi_k"]&
		\end{tikzcd}
	\end{center}
	such that
	\begin{itemize}
		\item the induced morphism $\rho':V \otimes G \rightarrow F$ is an element in ${\rm Quot}(V \otimes G,\mathcal{X},P)$;
		\item $\phi_k: \Lambda_k \otimes F \rightarrow F$ is a morphism.
	\end{itemize}
	This condition gives a $\Lambda_k$-structure on the coherent sheaf $F$.
	\item Let $[\rho: \Lambda_k \otimes V \otimes G \rightarrow F] \in Q_2$ be a point. Denote by $[\rho': V \otimes G \rightarrow F]$ the quotient in the factorization of $\rho$. The morphism $\rho$ also induces a morphism $\Lambda_1 \otimes V \otimes G \rightarrow F$. Denote by $K$ the kernel of the quotient map
	\begin{align*}
	0 \rightarrow K \rightarrow V \otimes G \rightarrow F \rightarrow 0.
	\end{align*}
	Denote by $Q_3 \subseteq Q_2$ the closed subscheme of $Q_2$ such that the induced map $\Lambda_1 \otimes K \rightarrow \Lambda_1 \otimes V \otimes G \rightarrow F$ is trivial.
	\item As discussed in the last step, a point $[\rho: \Lambda_k \otimes V \otimes G \rightarrow F] \in Q_3$ induces a morphism $\Lambda_1 \otimes F \rightarrow F$. This morphism also induces the following ones
	\begin{align*}
	\underbrace{(\Lambda_1 \otimes \dots \otimes \Lambda_1)}_j \otimes F \rightarrow F,
	\end{align*}
	for each positive integer $j$. Denote by $K_j$ the kernel of the surjection
	\begin{align*}
	\underbrace{\Lambda_1 \otimes \dots \otimes \Lambda_1}_j \rightarrow \Lambda_j \rightarrow 0.
	\end{align*}
	Thus, we have a natural map
	\begin{align*}
	K_j \otimes F \rightarrow F.
	\end{align*}
	For each positive integer $j$, there exists a closed subscheme $Q_{4,j} \subseteq Q_3$ such that if $[\rho] \in Q_{4,j}$, then the corresponding map $K_j \otimes F \rightarrow F$ is trivial.
	\item Denote by $Q_{4,\infty}$ the intersection of $Q_{4,j}$, for $j \geq 1$.
	\item The above steps show that a quotient $[\rho] \in Q_{4,\infty}$ gives a $\Lambda$-structure on $F$, and this $\Lambda$-structure will induce a $\Lambda_k$-structure on $F$. Note that this induced $\Lambda_k$-structure may not be the same as the given one. However, there is a closed subscheme $Q_5 \subseteq Q_{4,\infty}$ such that these two structures are the same.
	\item Let $Q_6 \subseteq Q_5$ be the open subscheme such that if $[\rho: \Lambda_k \otimes V \otimes G \rightarrow F] \in Q_6$, then we have $V \cong H^0(Y,F(N))$.
	\item There is an open subset $Q^{ss}_{\Lambda} \subseteq Q_6$ such that if $[\rho: \Lambda_k \otimes V \otimes G \rightarrow F] \in Q^{ss}_{\Lambda}$, then $F$ is a $p$-semistable $\Lambda$-sheaf.
\end{enumerate}
With respect to the above construction, the subset
\begin{align*}
Q^{ss}_{\Lambda} \subseteq {\rm Quot}(\Lambda_k \otimes V \otimes G,\mathcal{X},P)
\end{align*}
is a quasi-projective scheme, which parameterizes $\Lambda$-modules with modified Hilbert polynomial $P$. There is an induced ${\rm SL}(V)$-action on $Q^{ss}_{\Lambda}$. Given a point \begin{align*}
[\rho: \Lambda_k \otimes V \otimes G \rightarrow F] \in {\rm Quot}(\Lambda_k \otimes V \otimes G,\mathcal{X},P),
\end{align*}
if $F$ is $\mathcal{E}$-semistable (resp. $\mathcal{E}$-stable), then it is also semistable (resp. stable) in the sense of GIT \cite[Lemma 6.4]{Sun202003}. Define $\mathfrak{M}^{\mathcal{E}ss}_{\Lambda}(\mathcal{X},P):=Q^{ss}_{\Lambda}/{\rm SL}(V)$.

\begin{thm}[Theorem 6.8 in \cite{Sun202003}]\label{704}
	The quasi-projective scheme $\mathfrak{M}^{\mathcal{E}ss}_{\Lambda}(\mathcal{X},P)$
	is the coarse moduli space of $\mathcal{E}$-semistable $\Lambda$-sheaves with modified Hilbert polynomial $P$ on $\mathcal{X}$, and the geometric points of $\mathfrak{M}^{\mathcal{E}ss}_{\Lambda}(\mathcal{X},P)$ represent the equivalence classes of $\mathcal{E}$-semistable $\Lambda$-sheaves, where the equivalence is given by the Jordan--H\"older filtration and is known as $S$-equivalence.
\end{thm}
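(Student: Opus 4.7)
The plan is to adapt Simpson's GIT construction of the moduli space of $\Lambda$-modules on a projective scheme to the setting of a tame Deligne-Mumford stack $\mathcal{X}=[Y/\Gamma]$, using the generating sheaf $\mathcal{E}$ and modified Hilbert polynomial $P_{\mathcal{E}}$ to translate stack-theoretic data into data on the coarse moduli space $X$. The first step is a boundedness argument: for $N\gg 0$ depending only on $P$, every $\mathcal{E}$-semistable $\Lambda$-sheaf $F$ with modified Hilbert polynomial $P$ is $m$-regular with respect to $\mathcal{E}$, so that $V:=H^0(X,\pi_\ast\mathcal{H}om(\mathcal{E},F(N)))$ has dimension $P(N)$ and the evaluation map $V\otimes G\twoheadrightarrow F$ is surjective. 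This makes $F$ a quotient parametrized by a point of $Q_6$, and the closed loci $Q_2,Q_3,Q_{4,\infty},Q_5$ carved out in steps (1)--(5) cut out exactly those quotients compatible with the $\Lambda$-action.

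Next, the natural $\mathrm{SL}(V)$-action on $Q_1$ preserves each of the subschemes $Q_i$ and the open $\mathcal{E}$-semistable locus $Q^{ss}_\Lambda\subseteq Q_6$, with orbits corresponding bijectively to isomorphism classes of $\mathcal{E}$-semistable $\Lambda$-sheaves. To form the quotient, embed $Q^{ss}_\Lambda$ into an $\mathrm{SL}(V)$-equivariant Grassmannian via $F_{\mathcal{E}}$ and the closed immersion of Quot-schemes \cite[Lemma 6.2]{OlSt}, and linearize with the Pl\"ucker polarization. One then shows that a point $[\rho:\Lambda_k\otimes V\otimes G\to F]$ is GIT-semistable for this linearization if and only if $F$ is $\mathcal{E}$-semistable as a $\Lambda$-sheaf; this is the content of \cite[Lemma 6.4]{Sun202003} and follows, via the Hilbert--Mumford criterion, from the standard weight calculation that rephrases one-parameter subgroups of $\mathrm{SL}(V)$ in terms of weighted filtrations of $F$ by coherent $\Lambda$-subsheaves, matched against the reduced modified Hilbert polynomials $p_{\mathcal{E}}$. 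Seshadri's theorem on good quotients by reductive group actions then produces $\mathfrak{M}^{\mathcal{E}ss}_\Lambda(\mathcal{X},P):=Q^{ss}_\Lambda/\mathrm{SL}(V)$ as a quasi-projective scheme.

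It remains to verify the coarse moduli property and the $S$-equivalence description. Given a flat family $\mathcal{F}$ of $\mathcal{E}$-semistable $\Lambda$-sheaves over a base $S$, the sheaf $\mathcal{V}:=\pi_{S\ast}(\pi_X^\ast\mathcal{E}^\vee\otimes\mathcal{F}\otimes\pi_X^\ast\mathcal{O}_X(N))$ is locally free of rank $P(N)$ on $S$ by cohomology-and-base-change for $N\gg 0$, and its frame bundle yields an $\mathrm{SL}(V)$-equivariant morphism to $Q^{ss}_\Lambda$ which descends canonically to a map $S\to \mathfrak{M}^{\mathcal{E}ss}_\Lambda(\mathcal{X},P)$ independent of all auxiliary choices. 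For the $S$-equivalence statement, one uses that closed $\mathrm{SL}(V)$-orbits in $Q^{ss}_\Lambda$ correspond precisely to polystable $\Lambda$-sheaves (i.e. direct sums of $\mathcal{E}$-stable pieces with the same $p_{\mathcal{E}}$), that every semistable $F$ admits a Jordan--H\"older filtration whose associated graded ${\rm gr}(F)$ is polystable, and that two semistable objects define the same point of the GIT quotient iff their polystable representatives are isomorphic.

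The main obstacle will be Step~2: establishing the precise equivalence between $\mathcal{E}$-semistability and GIT semistability on Quot-schemes over $\mathcal{X}$. One must translate the weight inequalities coming from one-parameter subgroups of $\mathrm{SL}(V)$ into inequalities between modified Hilbert polynomials of $\Lambda$-subsheaves, and this requires carefully controlling the interaction between the generating sheaf $\mathcal{E}$, the $\Lambda$-structure (which involves the closed conditions of steps (3)--(5)), and the Serre vanishing used to compute $H^0$ of subsheaves after twisting by $N$. Once this is in place, the remaining pieces---boundedness, Seshadri's quotient theorem, and the Jordan--H\"older formalism---apply in the stacky setting essentially as in Simpson's original argument.
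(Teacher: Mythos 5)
Your proposal follows essentially the same route as the paper, which itself only sketches the construction from \cite{Sun202003}: boundedness and the Quot-scheme $Q_1\supseteq\cdots\supseteq Q^{ss}_{\Lambda}$ cutting out the $\Lambda$-structure, the closed immersion $F_{\mathcal{E}}$ of Quot-schemes from \cite{OlSt} to obtain projectivity, the comparison of $\mathcal{E}$-semistability with GIT semistability (\cite[Lemma 6.4]{Sun202003}), the quotient $Q^{ss}_{\Lambda}/{\rm SL}(V)$, and $S$-equivalence via closed orbits and Jordan--H\"older graded objects. Your identification of the GIT/$\mathcal{E}$-semistability matching as the delicate step is consistent with where the cited reference does the real work.
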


There is a well-known correspondence between $\Gamma$-equivariant bundles on $Y$ and parabolic bundles on the coarse moduli space $X$ \cite{NaSt}. It was observed by Nironi in \cite{Nir}, that with a good choice of the generating sheaf $\mathcal{E}$, the $\mathcal{E}$-stability of coherent sheaves on $[Y/\Gamma]$ is equivalent to the stability of the corresponding parabolic bundle on $X$.

\begin{lem}[\S 7.2 in \cite{Nir}]
	There exists a generating sheaf $\mathcal{E}$ on $[Y/\Gamma]$, such that the $\mathcal{E}$-stability of coherent sheaves on $[Y/\Gamma]$ is equivalent to the stability of the corresponding parabolic bundles on $X$, and therefore equivalent to the stability of the corresponding $\Gamma$-equivariant bundles on $Y$.
\end{lem}

\begin{cor}\label{707}
	There exists a generating sheaf $\mathcal{E}$ such that the moduli space of $\mathcal{E}$-semistable logarithmic Higgs bundles on $\mathcal{X}$ is isomorphic to the moduli space of semistable $\Gamma$-equivariant logarithmic Higgs bundles on $Y$, that is,
	\begin{align*}
	\mathfrak{M}^{\mathcal{E}ss}_{H}(\mathcal{X},P) \cong \mathfrak{M}^{ss}_{H}([Y\backslash \Gamma],P).
	\end{align*}
\end{cor}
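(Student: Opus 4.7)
The plan is to assemble this corollary from three ingredients already in hand: Theorem \ref{704} (existence of the moduli space of $\mathcal{E}$-semistable $\Lambda$-sheaves), the interpretation of a Higgs field as a $\Lambda$-structure for $\Lambda = \mathrm{Sym}(T_{\mathcal{X}})$, and the preceding lemma from \cite{Nir} supplying a generating sheaf $\mathcal{E}$ on $\mathcal{X} = [Y/\Gamma]$ for which $\mathcal{E}$-stability of coherent sheaves on $\mathcal{X}$ matches ordinary slope stability of the corresponding $\Gamma$-equivariant sheaves on $Y$.

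First, I would choose such a generating sheaf $\mathcal{E}$ and specialize Theorem \ref{704} to $\Lambda=\mathrm{Sym}(T_{\mathcal{X}})$; by the discussion preceding Definition \ref{702}, a $\Lambda$-module structure on a coherent sheaf $F$ is exactly the data of a Higgs field $\phi\colon F \to F\otimes \Omega^1_{\mathcal{X}}$, and $\Lambda$-subsheaves are exactly $\phi$-invariant subsheaves. Thus Theorem \ref{704} gives a quasi-projective scheme $\mathfrak{M}^{\mathcal{E}ss}_H(\mathcal{X},P)$ coarsely representing $S$-equivalence classes of $\mathcal{E}$-semistable Higgs bundles on $\mathcal{X}$ with modified Hilbert polynomial $P$.

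Next I would promote the equivalence of stabilities from the sheaf level to the Higgs level. By the previous lemma, the functor $F \mapsto F^{\mathrm{par}}$ sending a $\Gamma$-equivariant coherent sheaf on $Y$ (equivalently, a coherent sheaf on $\mathcal{X}$) to its associated parabolic sheaf on $X$ converts $\mathcal{E}$-(semi)stability into parabolic (semi)stability, and the latter coincides with slope (semi)stability of the $\Gamma$-equivariant sheaf on $Y$ by \cite{Bis1997, NaSt}. The point to check is that this identification respects Higgs structures: under pullback along $Y \to \mathcal{X}$ a Higgs field $\phi$ on $F$ corresponds to a $\Gamma$-equivariant endomorphism-valued one-form on the pullback, and $\phi$-invariant subsheaves on $\mathcal{X}$ correspond precisely to $\Gamma$-equivariant $\phi$-invariant subsheaves on $Y$. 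Since the (semi)stability test for a Higgs bundle is taken over $\phi$-invariant subsheaves on each side, the test sets match and therefore $(F,\phi)$ is $\mathcal{E}$-(semi)stable on $\mathcal{X}$ if and only if the associated $\Gamma$-equivariant Higgs bundle is (semi)stable on $Y$.

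Finally, I would conclude by noting that both moduli problems parametrize the same collection of objects up to $S$-equivalence (the Jordan--H\"older filtrations on each side are in bijection under the equivalence of abelian categories between coherent sheaves on $\mathcal{X}$ and $\Gamma$-equivariant coherent sheaves on $Y$, restricted to the Higgs setting). The universal families agree, so $\mathfrak{M}^{\mathcal{E}ss}_H(\mathcal{X},P) \cong \mathfrak{M}^{ss}_H([Y/\Gamma],P)$ as schemes. The one subtlety to be careful about, which I anticipate as the main checkpoint rather than a real obstacle, is matching the polarization data: the modified Hilbert polynomial $P$ on $\mathcal{X}$ with respect to $\mathcal{E}$ and $\pi^*\mathcal{O}_X(1)$ must be related to the numerical invariants (rank, degree, parabolic weights) used to slice the $\Gamma$-equivariant moduli space, so that one is indeed comparing components with matching discrete data rather than isomorphic ambient schemes with mismatched parameters. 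Once the dictionary of invariants is fixed, the equivalence of stability conditions established above produces the asserted isomorphism.
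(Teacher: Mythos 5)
Your proposal is correct and follows essentially the same route as the paper, which deduces the corollary directly from Theorem \ref{704} (specialized to $\Lambda=\mathrm{Sym}(T_{\mathcal{X}})$) together with the preceding lemmas identifying $\mathcal{E}$-stability with parabolic, hence $\Gamma$-equivariant, slope stability; the paper in fact states the corollary as an immediate special case without writing out the verification that the stability test sets and $S$-equivalence classes match on both sides. Your extra attention to the dictionary between the modified Hilbert polynomial and the $\Gamma$-equivariant numerical invariants is a reasonable point that the paper leaves implicit.
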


\subsection{Moduli Space of R-semistable Equivariant Logarithmic G-Higgs Bundles}
We first review two important lemmas. Let $A, A'$  be two algebraic groups, and let $\rho: A' \rightarrow A$ be a homomorphism. Let $\mathcal{S}$ be a set of isomorphism classes of principal $A$-bundles on $Y$. Let $\mathscr{E} \rightarrow \mathcal{R} \times X$ be a family of principal $A$-bundles in $\mathcal{S}$. Suppose that an algebraic group $H$ acts on $T$ by $\sigma: H \times T \rightarrow T$, and we have an isomorphism $\widetilde{\sigma}: H \times \mathscr{E} \cong (\sigma \times id_X)^* \mathscr{E}$. The family $\mathscr{E}$ is an \emph{$H$-universal family}, if the following conditions hold:
\begin{enumerate}
	\item For any family of principal $A$-bundles $\mathscr{F} \rightarrow S \times X$ and any point $s \in S$, there exists an open neighbourhood of $s \in S$, and a morphism $f: U \rightarrow \mathcal{R}$ such that $\mathscr{F}|_{U \times X} \cong (f \times 1_X)^* \mathscr{E}$.
	\item Given two morphisms $f_1, f_2: S \rightarrow \mathcal{R}$ and an isomorphism $\varphi: \mathscr{E}_{f_1} \cong \mathscr{E}_{f_2}$, there exists a unique morphism $h: S \rightarrow H$ such that $f_2= \sigma \circ (f_1 \times h)$ and $\varphi=(f_1 \times h \times 1_X)^*(\widetilde{\sigma})$.
\end{enumerate}

We consider the functor
\begin{align*}
\widetilde{\Gamma}(\rho,\mathscr{E}): ({\rm Sch}/\mathcal{R}) \rightarrow {\rm Sets},
\end{align*}
such that for each $\mathcal{R}$-scheme $S$, $\widetilde{\Gamma}(\rho,\mathscr{E})(S)$ is the set of pairs $(E,\tau)$, where $E$ is a principal $A'$-bundles on $S \times X$, and $\tau: \rho_*(E) \rightarrow \mathscr{E}_S$ is an isomorphism.

\begin{lem}[Lemma 4.8.1 in \cite{Rama19962}]\label{708}
	If $\rho: A' \rightarrow A$ is injective, then the functor $\widetilde{\Gamma}(\rho,\mathscr{E})$ is representable by a quasi-projective $\mathcal{R}$-scheme.
\end{lem}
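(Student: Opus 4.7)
The plan is to reinterpret $\widetilde{\Gamma}(\rho,\mathscr{E})$ as a relative functor of sections of an associated coset-bundle fibration, and then apply the standard relative Hilbert-scheme machinery. First I would observe that, because $\rho$ is injective, giving, for an $\mathcal{R}$-scheme $S$, a principal $A'$-bundle $E$ over $S\times X$ together with an isomorphism $\tau:\rho_{*}(E)\xrightarrow{\sim}\mathscr{E}_{S}$ is exactly the datum of a reduction of structure group of $\mathscr{E}_S$ from $A$ to $A'$ along $\rho$. Such reductions are in functorial bijection with sections $\sigma:S\times X\to \mathscr{E}_{S}/A'$ of the associated quotient fibration. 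Hence $\widetilde{\Gamma}(\rho,\mathscr{E})$ is naturally isomorphic to the functor of relative sections of $\mathscr{E}/A'\to \mathcal{R}\times X$ over $\mathcal{R}$-schemes.

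Next I would produce a projective compactification of this coset bundle. Since $\rho:A'\hookrightarrow A$ is a closed immersion of algebraic groups, Chevalley's theorem gives an $A$-equivariant locally closed immersion $A/A'\hookrightarrow \mathbb{P}(V)$ into the projectivization of a finite-dimensional $A$-representation $V$. Associating with $\mathscr{E}$ produces a locally closed immersion of $(\mathcal{R}\times X)$-schemes
\[
\mathscr{E}/A'\hookrightarrow \mathbb{P}(\mathscr{E}\times^{A}V)=:\overline{\mathscr{P}},
\]
which exhibits $\mathscr{E}/A'$ as an open subscheme $\mathscr{U}$ of a projective bundle $\overline{\mathscr{P}}\to \mathcal{R}\times X$ that carries a relatively ample line bundle inherited from $\mathcal{O}_{\mathbb{P}(V)}(1)$.

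The remaining step is standard. Sections of $\overline{\mathscr{P}}\to \mathcal{R}\times X$ correspond to closed subschemes of $\overline{\mathscr{P}}$ whose projection to $\mathcal{R}\times X$ is an isomorphism; the functor of such subschemes embeds in the relative Hilbert scheme $\mathrm{Hilb}(\overline{\mathscr{P}}/\mathcal{R})$. Since $X$ is a smooth projective curve and $\overline{\mathscr{P}}\to\mathcal{R}\times X$ is projective and relatively polarized, Grothendieck's theorem realizes $\mathrm{Hilb}(\overline{\mathscr{P}}/\mathcal{R})$ as a disjoint union of quasi-projective $\mathcal{R}$-schemes indexed by Hilbert polynomials. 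The section condition is a locally closed condition, and the requirement that a section factor through the open subscheme $\mathscr{U}=\mathscr{E}/A'\subset \overline{\mathscr{P}}$ is a further open condition; their intersection is a quasi-projective $\mathcal{R}$-scheme representing $\widetilde{\Gamma}(\rho,\mathscr{E})$.

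The main obstacle I expect is the ``compactification'' step: producing the $A$-equivariant embedding $A/A'\hookrightarrow \mathbb{P}(V)$ globally enough that the associated relative bundle $\overline{\mathscr{P}}$ inherits a usable relative polarization, and then bounding the Hilbert polynomials that can arise for sections in the given family $\mathscr{E}$ so that only finitely many components of the relative Hilbert scheme contribute. Once this boundedness is established (using properness of $\mathcal{R}\times X\to\mathcal{R}$ and the uniform nature of the polarization on fibers), the identification of $\widetilde{\Gamma}(\rho,\mathscr{E})$ with an open subscheme of a quasi-projective piece of $\mathrm{Hilb}(\overline{\mathscr{P}}/\mathcal{R})$ is formal, and quasi-projectivity over $\mathcal{R}$ follows.
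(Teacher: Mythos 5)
The paper does not prove this lemma at all --- it is imported verbatim from Ramanathan \cite{Rama19962}, so there is no in-paper argument to compare against; your outline is essentially the classical proof strategy behind Ramanathan's statement (reductions of structure group along a closed immersion are sections of the coset fibration $\mathscr{E}/A'\to\mathcal{R}\times X$; embed the fiber by Chevalley; represent the section functor inside a relative Hilbert scheme). That reduction is correct, modulo one slip: $A/A'$ is only \emph{locally closed} in $\mathbb{P}(V)$, not open (for $A'$ parabolic it is closed of positive codimension), so your $\overline{\mathscr{P}}$ should be the associated bundle with fiber the orbit closure $\overline{A\cdot[\ell]}$, inside which $\mathscr{E}/A'$ is genuinely open.

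The genuine gap is the boundedness step, and it cannot be repaired by ``properness of $\mathcal{R}\times X\to\mathcal{R}$ and uniformity of the polarization.'' For a general injective $\rho$ the Hilbert polynomials of sections are \emph{not} bounded: take $A=\mathrm{SL}_2$, $A'$ a Borel and $\mathscr{E}$ the trivial bundle, so that sections of $\mathscr{E}/A'\cong X\times\mathbb{P}^1$ are graphs of morphisms $X\to\mathbb{P}^1$ of arbitrary degree (equivalently, line subbundles of $\mathcal{O}_X^{\oplus 2}$ have arbitrarily negative degree); the section functor is then represented only by a countably infinite disjoint union of quasi-projective $\mathcal{R}$-schemes. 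What makes the lemma work in the form it is used in \S 6.3 is that for both inclusions to which it is applied, namely $\mathbb{C}^*\times\mathrm{Aut}(\mathfrak{g})\hookrightarrow\mathrm{GL}(\mathfrak{g})$ and $G/Z\hookrightarrow\mathrm{Aut}(\mathfrak{g})$, the subgroup $A'$ is reductive, so $A/A'$ is \emph{affine} by Matsushima's criterion. In that case one should bypass the Hilbert scheme entirely: embed $A/A'$ as an $A$-stable closed subvariety of a representation $W$, represent the functor $S\mapsto H^0\bigl(S\times X,(\mathscr{E}\times^A W)_S\bigr)$ by the linear scheme $\mathbb{V}(Q)$ of a coherent sheaf $Q$ on $\mathcal{R}$ (Grothendieck's cohomology-and-base-change construction), which is affine of finite type over $\mathcal{R}$, and cut out the closed condition that the section land in $\mathscr{E}\times^A(A/A')$. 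This gives a representing scheme affine, hence quasi-projective, over $\mathcal{R}$, with boundedness automatic. So either add the hypothesis that $A/A'$ is affine (which is all this paper needs) or fix the topological type of the $A'$-bundle to select finitely many Hilbert polynomials; as written, your argument proves only local quasi-projectivity.
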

Denote by $\mathcal{R}_1$ the quasi-projective $\mathcal{R}$-scheme representing $\widetilde{\Gamma}(\rho,\mathscr{E})$, and $\mathcal{E}_1 \rightarrow \mathcal{R}_1 \times X$ the universal family corresponding to the universal element in $\widetilde{\Gamma}(\rho,\mathscr{E})(\mathcal{R}_1)$.

\begin{lem}[Lemma 4.10 in \cite{Rama19962}]\label{709}
	Let $\rho: A' \rightarrow A$ be a homomorphism of algebraic groups. Let $\mathscr{E} \rightarrow \mathcal{R} \times X$ be an $H$-universal family for a set $\mathcal{S}$ of principal $G$-bundles. Suppose that the functor $\widetilde{\Gamma}(\rho,\mathscr{E})$ is representable by a scheme $\mathcal{R}_1$. Then, we have
	\begin{enumerate}
		\item The group $H$ acts on $\mathcal{R}_1$ in a natural way, and $\mathcal{R}_1$ is an $H$-universal family for the set of principal $A'$-bundles, which give $A$-bundles in $\mathcal{S}$ by extending the structure group by $\rho: A' \rightarrow A$.
		\item If $\rho$ is injective, then there exists a universal family $\mathscr{E}_1 \rightarrow \mathcal{R}_1 \times X$ of principal $A'$-bundles, which corresponds to the universal element in $\widetilde{\Gamma}(\rho,\mathscr{E})(\mathcal{R}_1)$.
	\end{enumerate}
\end{lem}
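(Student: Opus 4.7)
The plan is to produce the $H$-action on $\mathcal{R}_1$ by lifting the $H$-action on $\mathcal{R}$ through the compatibility isomorphism $\widetilde{\sigma}\colon H\times \mathscr{E}\cong (\sigma\times\mathrm{id}_X)^{\ast}\mathscr{E}$, and then verify that the resulting $H$-equivariant $\mathcal{R}_1$ satisfies the universal property in the list $\mathcal{S}'$ of those principal $A'$-bundles whose extension to $A$ lies in $\mathcal{S}$. The injectivity clause in (2) is then a formal consequence of Lemma~\ref{708}, since that lemma already packages the representing object together with the tautological element, which we merely reinterpret as a family of $A'$-bundles.

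For (1), I would construct the action functorially. A $T$-point of $\mathcal{R}_1$ is by definition a pair $(E,\tau)$ with $E$ an $A'$-bundle on $T\times X$ and $\tau\colon \rho_{\ast}(E)\xrightarrow{\sim} \mathscr{E}_{g}$ for some map $g\colon T\to \mathcal{R}$. Given $h\colon T\to H$, the isomorphism $\widetilde{\sigma}$ supplies an isomorphism $\mathscr{E}_{g}\xrightarrow{\sim} \mathscr{E}_{\sigma\circ(g\times h)}$, so one defines $h\cdot(E,\tau) = (E,\widetilde{\sigma}_{h}\circ\tau)$. The cocycle and identity axioms for this assignment are inherited from those of $\widetilde{\sigma}$, so we obtain a well-defined $H$-action on the representable functor $\widetilde{\Gamma}(\rho,\mathscr{E})$ and therefore on $\mathcal{R}_1$; by construction the projection $\mathcal{R}_1\to\mathcal{R}$ is $H$-equivariant.

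To check that $\mathcal{R}_1$ is $H$-universal for $\mathcal{S}'$, I would argue in two parts. For existence, let $\mathscr{F}\to S\times X$ be a family of $A'$-bundles with $\rho_{\ast}\mathscr{F}\in\mathcal{S}$ for every fiber. Applying the $H$-universality of $\mathscr{E}$ to $\rho_{\ast}\mathscr{F}$ produces, after shrinking $S$ to an open neighbourhood $U$ of a chosen point, a morphism $g\colon U\to \mathcal{R}$ and an isomorphism $\rho_{\ast}\mathscr{F}|_{U}\cong g^{\ast}\mathscr{E}$; this datum is an element of $\widetilde{\Gamma}(\rho,\mathscr{E})(U)$, hence classifies a morphism $\tilde{g}\colon U\to\mathcal{R}_1$ with $(\tilde{g}\times\mathrm{id}_X)^{\ast}\mathscr{E}_1\cong \mathscr{F}|_{U\times X}$ when $\rho$ is injective, and at the functor level in general. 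For uniqueness up to $H$, suppose two classifying maps $f_1,f_2\colon S\to\mathcal{R}_1$ yield isomorphic families of $A'$-bundles; pushing forward by $\rho$ and invoking the second universality axiom of $\mathscr{E}$ produces a unique $h\colon S\to H$ relating the underlying maps to $\mathcal{R}$, and the uniqueness of the lift to $\mathcal{R}_1$ is then forced by the defining property of the functor $\widetilde{\Gamma}(\rho,\mathscr{E})$.

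Part (2) is essentially formal: when $\rho$ is injective, Lemma~\ref{708} gives both the quasi-projective scheme $\mathcal{R}_1$ and the universal pair $(\mathscr{E}_1,\tau_1)\in\widetilde{\Gamma}(\rho,\mathscr{E})(\mathcal{R}_1)$, so $\mathscr{E}_1\to\mathcal{R}_1\times X$ is the desired universal family of $A'$-bundles, and the property $\rho_{\ast}\mathscr{E}_1\cong \pi^{\ast}\mathscr{E}$ is built in via $\tau_1$. The main obstacle, as usual in Ramanathan-style arguments, is the bookkeeping in the uniqueness step: one must verify that the $h\colon S\to H$ supplied by the universality of $\mathscr{E}$ at the level of $A$-bundles is compatible with the chosen trivializations $\tau$, i.e.\ that the induced action intertwines with $\widetilde{\sigma}$; this is the point where the injectivity of $\rho$ (via Lemma~\ref{708}) together with the explicit formula for the action via $\widetilde{\sigma}$ is used, and it is the only place where a careful diagram chase is genuinely required.
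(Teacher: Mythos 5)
The paper does not prove this lemma at all: it is quoted verbatim from Ramanathan (Lemma 4.10 of \cite{Rama19962}) and used as a black box in the construction of \S 6.3, so there is no in-paper argument to compare against. Your outline is correct and is essentially Ramanathan's own argument: the $H$-action on $\mathcal{R}_1$ is the lift of the action on $\mathcal{R}$ through $\widetilde{\sigma}$, defined on $T$-points by $h\cdot(E,\tau)=(E,\widetilde{\sigma}_h\circ\tau)$; local existence of classifying maps follows by applying the $H$-universality of $\mathscr{E}$ to $\rho_*\mathscr{F}$ and reinterpreting the resulting datum as an element of $\widetilde{\Gamma}(\rho,\mathscr{E})(U)$; and uniqueness follows because an isomorphism of $A'$-families pushes forward to an isomorphism of $A$-families, whence the unique $h\colon S\to H$ from the second axiom, with the compatibility $\tau_2\circ\rho_*\psi=\widetilde{\sigma}_h\circ\tau_1$ forcing the lift. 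One small correction: the injectivity of $\rho$ plays no role in the uniqueness bookkeeping of part (1) --- that step only uses the functoriality of $\rho_*$ and the defining property of $\widetilde{\Gamma}(\rho,\mathscr{E})$; injectivity is needed solely in part (2), via Lemma \ref{708}, to know that the universal element of $\widetilde{\Gamma}(\rho,\mathscr{E})(\mathcal{R}_1)$ is realized by an actual family $\mathscr{E}_1\to\mathcal{R}_1\times X$ rather than existing only at the level of the functor.
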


Now we will construct the moduli space $\mathfrak{M}^{Rss}_{H,\boldsymbol\theta}([Y \backslash \Gamma],G)$ of $R$-semistable logarithmic $(\Gamma,G)$-Higgs bundles on $Y$ (with arbitrary $\mu$). Let $G \rightarrow {\rm GL}(\mathfrak{g})$ be the adjoint representation. Then, for every logarithmic $(\Gamma,G)$-Higgs bundle $(F,\phi)$ on $Y$, we can associate an adjoint Higgs bundle $(F(\mathfrak{g}),\phi)$. By Theorem \ref{704} and Corollary \ref{707}, there exists a moduli space of semistable $\Gamma$-equivariant (adjoint) Higgs bundles on $Y$ with Hilbert polynomial $P$. Denote by $\mathcal{R}:=\mathfrak{M}^{Rss}_{H,\boldsymbol\theta}([Y \backslash \Gamma],P)$ the moduli space and $\mathscr{E} \rightarrow \mathcal{R} \times Y$ the universal family, and there is a natural ${\rm GL}(V)$-action on $\mathcal{R}$ for some vector space $V$. Now we follow Ramanathan's approach \cite{Rama19962} to construct the moduli space $\mathfrak{M}^{Rss}_{H,\boldsymbol\theta}([Y \backslash \Gamma],G)$. We omit the fixed Hilbert polynomial $P$ for simplicity.
\begin{center}
	\begin{tikzcd}
	{\rm Aut}(\mathfrak{g}) \arrow[r] & {\rm Aut}(\mathfrak{g}) \times \mathbb{C}^* \arrow[r, hook] & {\rm GL}(\mathfrak{g})\\
	{\rm Ad}(G)=G/Z \arrow[u, hook] & G/G' \times G/Z \arrow[l] & G \arrow[l]
	\end{tikzcd}
\end{center}

\begin{enumerate}
	\item Let $\mathbb{C}^* \times {\rm Aut}(\mathfrak{g}) \hookrightarrow {\rm GL}(\mathfrak{g})$ be the natural inclusion. By Lemma \ref{709}, we get a universal family for the set of $\Gamma$-equivariant logarithmic $(\mathbb{C}^* \times {\rm Aut}(\mathfrak{g}))$-Higgs bundles, of which the associated $\Gamma$-equivariant logarithmic ${\rm GL}(\mathfrak{g})$-Higgs bundles are semistable. Denote by
	\begin{align*}
	\mathscr{E}_1 \rightarrow \mathcal{R}_1 \times Y
	\end{align*}
	the universal family of $\Gamma$-equivariant $(\mathbb{C}^* \times {\rm Aut}(\mathfrak{g}))$-Higgs bundles in this case.
	\item Given a $(\mathbb{C}^* \times {\rm Aut}(\mathfrak{g}))$-bundle $F$, if the associated line bundle $F(\mathbb{C}^*)$ is trivial, the $(\mathbb{C}^* \times {\rm Aut}(\mathfrak{g}))$-bundle $F$ can be reduced to an ${\rm Aut}(\mathfrak{g})$-bundle. By the universal property of the Picard scheme ${\rm Pic}(Y)$, the associated family
	\begin{align*}
	\mathscr{E}_1(\mathbb{C}^*) \rightarrow \mathcal{R}_1 \times Y
	\end{align*}
	corresponds to a morphism $f: \mathcal{R}_1 \rightarrow {\rm Pic}(Y)$. Let $\mathcal{R}'_1=f^{-1}([\mathcal{O}_X])$. Then, the family
	\begin{align*}
	\mathscr{E}'_1 := \mathscr{E}_1|_{\mathcal{R}'_1} \rightarrow \mathcal{R}'_1 \times Y
	\end{align*}
	is a ${\rm GL}(V)$-universal family for $\Gamma$-equivariant logarithmic ${\rm Aut}(\mathfrak{g})$-Higgs bundles, of which the associated $\Gamma$-equivariant logarithmic ${\rm GL}(\mathfrak{g})$-Higgs bundles are semistable.
	\item Note that ${\rm Ad}(G)=G/Z \hookrightarrow {\rm Aut}(\mathfrak{g})$ is injective. By Lemma \ref{708}, the functor $\widetilde{\Gamma}({\rm Ad}, \mathscr{E}'_1)$ is representable. Let
	\begin{align*}
	\mathscr{E}_2 \rightarrow \mathcal{R}_2 \times Y
	\end{align*}
	be the ${\rm GL}(V)$-universal family of $\Gamma$-equivariant logarithmic $G/Z$-Higgs bundles, of which the associated $\Gamma$-equivariant logarithmic ${\rm GL}(\mathfrak{g})$-Higgs bundles are semistable.
	\item In this step, we will construct a universal family for $\Gamma$-equivariant logarithmic $(G/G'\times G/Z)$-Higgs bundles, where $G'$ is the derived group. Since $G$ is reductive, $G/G'$ is a torus. We assume $G/G' \cong (\mathbb{C}^{*})^l$. It is well-known that a $\mathbb{C}^*$-bundle is a line bundle, and ${\rm Pic}(Y)$ classifies all line bundles on $Y$. Therefore, $\prod^l {\rm Pic}(Y)$ parameterizes all $G/G'$-bundles. Denote by $\mathscr{P} \rightarrow {\rm Pic}(Y)$ the Poincar\'e bundle. We consider the following family
	\begin{align*}
	(\underbrace{\mathscr{P} \times_Y \dots \times_Y \mathscr{P}}_l) \times_{Y} \mathscr{E}_2 \rightarrow (\prod^l {\rm Pic}(Y) \times \mathcal{R}_2) \times Y
	\end{align*}
	of $(G/G' \times G/Z)$-bundles. We define
	\begin{align*}
	\mathcal{E}'_2:= (\underbrace{\mathscr{P} \times_Y \dots \times_Y \mathscr{P}}_l) \times_{Y} \mathscr{E}_2
	\end{align*}
	and
	\begin{align*}
	\mathcal{R}'_2:= (\prod^l {\rm Pic}(Y) \times \mathcal{R}_2).
	\end{align*}
	Then, $\mathscr{E}'_2$ is a ${\rm GL}(V)$-universal family of $\Gamma$-equivariant logarithmic $(G/G' \times G/Z)$-Higgs bundles.
	\item Finally, we consider the natural projection $\rho: G \rightarrow G/G' \times G/Z$. The functor $\widetilde{\Gamma}(\rho,\mathscr{E}'_2)$ is representable by a scheme $\mathcal{R}_3$ (see \cite[Lemma 4.15.1]{Rama19962}). Denote by $\mathscr{E}_3 \rightarrow \mathcal{R}_3 \times Y$ the ${\rm GL}(V)$-universal family of $(\Gamma,G)$-Higgs bundles, of which the associated $\Gamma$-equivariant logarithmic ${\rm GL}(\mathfrak{g})$-Higgs bundles are semistable. Therefore, the scheme $\mathcal{R}_3$ parameterizes $R$-semistable logarithmic $(\Gamma,G)$-Higgs bundles on $Y$.
\end{enumerate}

The above discussion gives the following proposition.
\begin{prop}
	There is a quasi-projective coarse moduli space $\mathfrak{M}^{Rss}_{H,\boldsymbol\theta}([Y \backslash \Gamma],G)$ of $R$-semistable logarithmic $(\Gamma,G)$-Higgs bundles on $Y$.
\end{prop}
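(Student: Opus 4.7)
The plan is to reduce the construction to the moduli of $p$-semistable Higgs bundles on the quotient stack $\mathcal{X}=[Y/\Gamma]$ already built in Theorem \ref{704} and Corollary \ref{707}, and then to climb back up the chain of group homomorphisms
\[ G \longrightarrow G/G' \times G/Z \longrightarrow G/Z = {\rm Ad}(G) \hookrightarrow {\rm Aut}(\mathfrak{g})\times \mathbb{C}^{*} \hookrightarrow {\rm GL}(\mathfrak{g}), \]
following Ramanathan's strategy from \cite{Rama19962}. The crucial input is Corollary \ref{412}: a tame $(\Gamma,G)$-Higgs bundle $(F,\phi)$ is $R$-semistable if and only if its adjoint Higgs bundle $({\rm Ad}(F),\phi)$ is $p$-semistable. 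Thus I would start from the scheme $\mathcal{R}:=\mathfrak{M}^{ss}_{H}([Y/\Gamma],P)$ of Corollary \ref{707}, which by construction carries a ${\rm GL}(V)$-universal family $\mathscr{E}\to \mathcal{R}\times Y$ of semistable $\Gamma$-equivariant ${\rm GL}(\mathfrak{g})$-Higgs bundles, and use it as the base for lifting the structure group.

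I would then climb the chain one step at a time. First, since $\mathbb{C}^{*}\times {\rm Aut}(\mathfrak{g})\hookrightarrow {\rm GL}(\mathfrak{g})$ is injective, Lemmas \ref{708} and \ref{709} together produce a ${\rm GL}(V)$-universal family $\mathscr{E}_{1}\to\mathcal{R}_{1}\times Y$ of $(\mathbb{C}^{*}\times {\rm Aut}(\mathfrak{g}))$-Higgs bundles whose associated ${\rm GL}(\mathfrak{g})$-bundles are semistable. To force the $\mathbb{C}^{*}$-factor to be trivial, I use the classifying morphism $f\colon \mathcal{R}_{1}\to {\rm Pic}(Y)$ of the associated line-bundle family and restrict to $\mathcal{R}'_{1}:=f^{-1}([\mathcal{O}_{Y}])$; the restriction of $\mathscr{E}_{1}$ is a ${\rm GL}(V)$-universal family of ${\rm Aut}(\mathfrak{g})$-Higgs bundles. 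Since ${\rm Ad}(G)=G/Z\hookrightarrow {\rm Aut}(\mathfrak{g})$ is injective, another application of Lemmas \ref{708} and \ref{709} produces a ${\rm GL}(V)$-universal family $\mathscr{E}_{2}\to \mathcal{R}_{2}\times Y$ of $(\Gamma,G/Z)$-Higgs bundles.

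To recover the torus factor of $G$, I would use the Poincar\'{e} bundle $\mathscr{P}\to {\rm Pic}(Y)\times Y$ and form the fiber product of $\mathscr{E}_{2}$ with $l=\dim(G/G')$ copies of $\mathscr{P}$, yielding a ${\rm GL}(V)$-universal family $\mathscr{E}'_{2}$ of $(G/G'\times G/Z)$-Higgs bundles over the base $\mathcal{R}'_{2}:=\prod^{l}{\rm Pic}(Y)\times \mathcal{R}_{2}$. Finally, the projection $\rho\colon G\to G/G'\times G/Z$ has central kernel and the functor $\widetilde{\Gamma}(\rho,\mathscr{E}'_{2})$ is representable by \cite[Lemma 4.15.1]{Rama19962}; this provides a scheme $\mathcal{R}_{3}$ carrying a ${\rm GL}(V)$-universal family of tame $(\Gamma,G)$-Higgs bundles whose adjoint bundles are $p$-semistable, which by Corollary \ref{412} is the same as being $R$-semistable. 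The desired moduli space is then the quotient $\mathfrak{M}^{Rss}_{H}([Y/\Gamma],G):=\mathcal{R}_{3}/{\rm GL}(V)$, which is quasi-projective because the ${\rm GL}(V)$-action is inherited from the GIT action already made to act on $\mathcal{R}$ at the level of Theorem \ref{704}.

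The step I expect to be the main obstacle is verifying that the final ${\rm GL}(V)$-quotient genuinely represents the coarse moduli problem of $R$-semistable $(\Gamma,G)$-Higgs bundles, and not merely the orbit space of an auxiliary parameter scheme. One must show that two such pairs give rise to the same point of $\mathfrak{M}^{Rss}_{H}([Y/\Gamma],G)$ exactly when they are $S$-equivalent in the equivariant sense, which requires tracking how automorphisms of the adjoint bundle lift to automorphisms of $(F,\phi)$ through the central extension $Z\to G\to G/Z$ and through the torus extension $G'\to G\to G/G'$. Moreover, each $\Gamma$-equivariant reduction produced by Lemmas \ref{708} and \ref{709} needs to remain compatible with the Higgs field in the sense of Definition \ref{409}; this is automatic on the parameter side because the Higgs fields are transported along with the bundles, but keeping track of it throughout the climbing is where care is needed when passing from the semisimple simply connected setting of Ramanathan to a general complex reductive group.
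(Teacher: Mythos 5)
Your proposal is correct and follows essentially the same route as the paper: starting from the moduli space of semistable adjoint Higgs bundles on $[Y/\Gamma]$ furnished by Theorem \ref{704} and Corollary \ref{707}, and climbing the chain $G \to G/G'\times G/Z \to G/Z \hookrightarrow \mathrm{Aut}(\mathfrak{g})\times\mathbb{C}^*\hookrightarrow \mathrm{GL}(\mathfrak{g})$ via Lemmas \ref{708} and \ref{709} and the Picard/Poincar\'e-bundle steps, exactly as in \S 6.3. The concerns you flag at the end (S-equivalence under the central and torus extensions, and compatibility of reductions with the Higgs field) are reasonable points of care, but the paper handles them no more explicitly than you do.
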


As we explained at the beginning of this section, this proposition then implies that there exists a moduli space of $R$-semistable logahoric $\mathcal{G}_{\boldsymbol\theta}$-Higgs torsors on $X$. If we fix a particular element in $G/G' \cong (\mathbb{C}^*)^l$, which corresponds to the given topological data $\mu$ (see Remark \ref{rem adj bundle}), then we obtain a quasi-projective scheme $\mathfrak{M}^{Rss}_{H,\boldsymbol\theta}([Y \backslash \Gamma],G,\mu)$ as the coarse moduli space in Theorem \ref{701}.

\begin{rem}\label{moduli of local system}
We briefly discuss the construction of the moduli space of logahoric connections, where \emph{logahoric} is a blend of the words \emph{logarithmic} and \emph{parahoric}. Let $E$ be a parahoric $\mathcal{G}_{\boldsymbol\theta}$-torsor. A \emph{logarithmic connection} on $E$ is a connection $\nabla: \mathcal{O}_E \rightarrow \mathcal{O}_E \otimes K_X(D)$ such that the diagram 
\begin{center}
\begin{tikzcd}
	\mathcal{O}_E \arrow[rr,"\nabla"] \arrow[d,"a"] & & \mathcal{O}_E \otimes K_X(D) \arrow[d,"a \otimes 1"] \\
	\mathcal{O}_E \otimes \mathcal{O}_{\mathcal{G}_{\boldsymbol\theta}} \arrow[rr,"\nabla \otimes 1 + 1 \otimes \nabla_{\mathcal{G}_{\boldsymbol\theta}}"] & & (\mathcal{O}_E \otimes \mathcal{O}_{\mathcal{G}_{\boldsymbol\theta}}) \otimes K_X(D)
\end{tikzcd}
\end{center}
commutes, where $a$ is the co-multiplication map and $\nabla_{\mathcal{G}_{\boldsymbol\theta}}$ is the canonical connection on $\mathcal{O}_{\mathcal{G}_{\boldsymbol\theta}}$. The connection $\nabla$ gives a $D_X$-scheme structure on $E$ (see \cite[Appendix]{Chenzhu1}). A pair $(E,\nabla)$ is called a \emph{logahoric connection}. Similar to logahoric $\mathcal{G}_{\boldsymbol\theta}$-Higgs torsors, logahoric connections on $X$ correspond to $(\Gamma,G)$-bundles on $Y$ with a logarithmic connection (see \cite{BBP}). More precisely, a $\Gamma$-equivariant $G$-bundle with a logarithmic connection is a pair $(F,\nabla)$, where $F$ is a $(\Gamma,G)$-bundle on $Y$ and $\nabla$ is a $\Gamma$-equivariant logarithmic connection on $F$. Therefore, it is equivalent to construct the moduli space of $R$-semistable $\Gamma$-equivariant $G$-bundles $F$ with an equivariant logarithmic connection $\nabla$ on the stack $[Y/\Gamma]$, where $R$-semistability of the pair $(F,\nabla)$ can be defined in a similar way as in \S\ref{sect_stab_cond}. Note that the pair $(F,\nabla)$ can be regarded as a special case of a sheaf of graded algebras (see \cite{Simp2,Sun202003}). Therefore, the moduli space of semistable $\Gamma$-equivariant $G$-bundles with a logarithmic connection on $[Y/\Gamma]$ exists (see again \cite{Simp2,Sun202003}), and analogously to the construction provided in this section, one gets the construction of the moduli space of $R$-semistable logahoric connections.
\end{rem}
	
\begin{rem}\label{irrational} One gets the same moduli space when considering irrational weights. By \cite[Proposition 7.3]{BBP}, given an irrational weight $\theta \in Y(T) \otimes_{\mathbb{Z}} \mathbb{R}$, there exists a rational weight $\theta'$ in the same facet as the given weight $\theta$, and then the irrational case can be reduced to the rational case. This result can be analogously extended to the case when considering nonzero Higgs fields too.
\end{rem}

\section{Poisson Structure on the Moduli Space of Logahoric Higgs Torsors}

In this section, we will construct algebraically a Poisson structure on the moduli space of logahoric Higgs torsors for reductive groups $G$. This algebraic construction comes from the consideration of Lie algebroids, as in \cite{KSZ4,LoMa}; we next review this technique below.

By Theorem \ref{701}, we have an isomorphism
\begin{align*}
\mathfrak{M}^{Rss}_H(X,\mathcal{G}_{\boldsymbol\theta},\mu) \cong \mathfrak{M}^{Rss}_{H,\boldsymbol\theta}([Y/\Gamma],G,\mu).
\end{align*}
The authors studied Poisson structures on the moduli space of Higgs bundles on stacky curves in \cite{KSZ4}, and we use a similar approach to construct the Poisson structure here.

Before we demonstrate the construction of the Poisson structure on $\mathfrak{M}^{Rss}_{H,\boldsymbol\theta}([Y/\Gamma],G,\mu)$, we first review some results on Lie algebroids and Poisson structures. Let $\mathfrak{M}$ be a projective (or quasi-projective) scheme over $\mathbb{C}$ together with a proper and free group action $K \times \mathfrak{M} \rightarrow \mathfrak{M}$. Then, we have a natural projection $\pi: \mathfrak{M} \rightarrow \mathfrak{M}/K$, which induces
\begin{align*}
0 \rightarrow T_{\rm orb} \mathfrak{M} \rightarrow T \mathfrak{M} \rightarrow \pi^* T (\mathfrak{M}/K) \rightarrow 0.
\end{align*}
This exact sequence gives us a natural surjective morphism $T\mathfrak{M}/K \rightarrow T(\mathfrak{M}/K)$, which is the \emph{anchor map}. Then, we have
\begin{align*}
0 \rightarrow {\rm Ad}(\mathfrak{M}) \rightarrow T\mathfrak{M}/K \rightarrow T(\mathfrak{M}/K) \rightarrow 0,
\end{align*}
which is known as the \emph{Atiyah sequence}. The Atiyah sequence induces a \emph{Lie algebroid} structure on $T\mathfrak{M}/K$. By \cite[Theorem 2.1.4]{Cour}, the total space $(T\mathfrak{M}/K)^*$ has a Poisson structure.

Let $\mathcal{X}=[Y/\Gamma]$, and denote by $X$ the coarse moduli space of $\mathcal{X}$ with the natural morphism $\pi: \mathcal{X} \rightarrow X$. For simplicity, we use the following notation in the sequel
\begin{itemize}
	\item $\mathfrak{M}_H(\mathcal{X})$: moduli space of $R$-semistable logarithmic $(\Gamma,G)$-Higgs bundles of type $\boldsymbol\theta$ and $\mu$ on $Y$;
	\item $\mathfrak{M}^0_H(\mathcal{X})$: moduli space of $R$-stable logarithmic $(\Gamma,G)$-Higgs bundles $(F,\phi)$ of type $\boldsymbol\theta$ and $\mu$ on $Y$ such that $F$ is $R$-stable;
	\item $\mathfrak{M}(\mathcal{X})$: moduli space of $R$-stable $(\Gamma,G)$-bundles of type $\boldsymbol\theta$ and $\mu$ on $Y$;
	\item $\mathfrak{M}(X)$: moduli space of $R$-stable $G$-bundles on $X$.
	\item $\mu_1: \mathfrak{M}(\mathcal{X}) \times \mathcal{X} \rightarrow \mathfrak{M}(\mathcal{X})$ and $\nu_1: \mathfrak{M}(\mathcal{X}) \times \mathcal{X} \rightarrow \mathcal{X}$ are natural projections.
\end{itemize}

\begin{center}
	\begin{tikzcd}
	&  \mathfrak{M}(\mathcal{X}) \times \mathcal{X} \arrow[dl,swap, "\mu_1"] \arrow[dr, "\nu_1"] & \\
	\mathfrak{M}(\mathcal{X})   & & \mathcal{X} 
	\end{tikzcd}
\end{center}

Given a $G$-bundle $F$ on $\mathcal{X}$, the $G$-bundle $\pi^* \pi_* F$ is a $\Gamma$-invariant $G$-bundle on $\mathcal{X}$. Then, we have a natural morphism
\begin{align*}
{\rm Ad}(F)\hookrightarrow  {\rm Ad}(\pi^* \pi_* F).
\end{align*}
For each $\theta_y \in \boldsymbol\theta$, one gets a parabolic subgroup $P_y \subseteq G$. Denote by $P_y=L_y N_y$ the Levi factorization. Then, we have
\begin{align*}
0 \rightarrow {\rm Ad}(F)  \rightarrow {\rm Ad}(\pi^* \pi_* F)\rightarrow \prod_{y \in R} \mathfrak{n}_y \otimes \mathcal{O}_{y} \rightarrow 0.
\end{align*}
Suppose that $F$ is $R$-stable. By the deformation theory of $G$-bundles, the tangent space $T\mathfrak{M}(\mathcal{X})$ at $[F]$ is given by
\begin{align*}
T_{[F]}\mathfrak{M}(\mathcal{X}) \cong H^1(\mathcal{X}, {\rm Ad}(F)).
\end{align*}
Similarly, we have
\begin{align*}
T_{[\pi_*F]}\mathfrak{M}(X) \cong H^1(X, {\rm Ad}(\pi_* F)) \cong H^1(\mathcal{X}, {\rm Ad}(\pi^* \pi_* F)).
\end{align*}
By Grothendieck duality on stacky curves (see \cite{Nir08}), we have
\begin{align*}
T^*_{[F]}\mathfrak{M}(\mathcal{X}) \cong H^0(\mathcal{X}, {\rm Ad}(F) \otimes \omega_{\mathcal{X}}).
\end{align*}
Taking an element $\phi \in H^0(\mathcal{X}, {\rm Ad}(F) \otimes \omega_{\mathcal{X}})$, we get the following isomorphism
\begin{align*}
T_{\phi} T^*_{[F]}\mathfrak{M}(\mathcal{X}) \cong T_{(F,\phi)} \mathfrak{M}_H(\mathcal{X}),
\end{align*}
where $T_{(F,\phi)} \mathfrak{M}_H(\mathcal{X}) \cong \mathbb{H}^1({\rm Ad}(F) \xrightarrow{{\rm Ad}(\phi)} {\rm Ad}(F)\otimes \omega_{\mathcal{X}})$ (see \cite[\S 2.3]{GGR}).

Now let $\mathscr{F}$ and $\mathscr{E}$ be universal families on $\mathfrak{M}(\mathcal{X})$ and $\mathfrak{M}(X)$ respectively. The above discussion gives the following short exact sequence
\begin{align*}
0 \rightarrow {\rm Ad}(\mathscr{F}) \rightarrow {\rm Ad}(\mathscr{E}) \rightarrow \prod_{y \in R} \mathfrak{n}_y \otimes \mathcal{O}_{\nu_1^{-1}(y)} \rightarrow 0,
\end{align*}
which induces a short exact sequence
\begin{align*}
0 \rightarrow \mathscr{A}d \rightarrow R^1(\mu_1)_* {\rm Ad}(\mathscr{F}) \rightarrow R^1(\mu_1)_* {\rm Ad}(\mathscr{E}) \rightarrow 0.
\end{align*}
This sequence is an Atiyah sequence and so we have a Lie algebroid structure on $R^1(\mu_1)_* {\rm Ad}(\mathscr{E})$. Note that
\begin{align*}
R^1(\mu_1)_* {\rm Ad}(\mathscr{F}) \cong T \mathfrak{M}(\mathcal{X}) \quad \text{and} \quad  R^1(\mu_1)_* {\rm Ad}(\mathscr{E}) \cong T \mathfrak{M}(X).
\end{align*}
Then, $(T \mathfrak{M}(\mathcal{X}))^*$ has a Poisson structure. Moreover, it holds that
\begin{align*}
T(R^1(\mu_1)_* {\rm Ad}(\mathscr{F}))^* \cong T \mathfrak{M}_H(\mathcal{X}),
\end{align*}
thus by \cite[Theorem 2.1.4]{Cour} the moduli space $\mathfrak{M}^0_H(\mathcal{X})$ is equipped with a Poisson structure. Since $\mathfrak{M}^0_H(\mathcal{X})$ is dense in $\mathfrak{M}_H(\mathcal{X})$, there exists a Poisson structure on $\mathfrak{M}_H(\mathcal{X})$. We have proven the following:

\begin{prop}\label{801}
	There exists an algebraic Poisson structure on the moduli space $\mathfrak{M}^{Rss}_H(X,\mathcal{G}_{\boldsymbol\theta},\mu)$ of $R$-semistable logahoric $\mathcal{G}_{\boldsymbol\theta}$-Higgs torsors on $X$.
\end{prop}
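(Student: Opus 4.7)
The plan is to follow the strategy already outlined in the discussion preceding the proposition, adapting the construction from \cite{KSZ4} to the equivariant setting. First I would invoke Theorem \ref{701} to replace the moduli problem on $X$ with the moduli space $\mathfrak{M}_H(\mathcal{X}) := \mathfrak{M}^{Rss}_{H,\boldsymbol\rho}([Y/\Gamma],G)$ of $R$-semistable $(\Gamma,G)$-Higgs bundles of type $\boldsymbol\rho$ on $\mathcal{X}=[Y/\Gamma]$, and work on the open dense locus $\mathfrak{M}^0_H(\mathcal{X})$ of $R$-stable Higgs pairs whose underlying bundle is itself $R$-stable. Poisson structures extend uniquely across the complement of a dense open substack, so it suffices to produce one on $\mathfrak{M}^0_H(\mathcal{X})$.

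The key construction is the Atiyah-type short exact sequence. For a $G$-bundle $F$ on $\mathcal{X}$ with coarse image $\pi_*F$ on $X$, the unit $F \hookrightarrow \pi^*\pi_*F$ and the Levi decompositions $P_y = L_y N_y$ at the ramification points $y \in R$ yield
\begin{align*}
0 \to {\rm Ad}(F) \to {\rm Ad}(\pi^*\pi_*F) \to \bigoplus_{y \in R} \mathfrak{n}_y \otimes \mathcal{O}_y \to 0.
\end{align*}
Applying this sheaf-theoretic sequence to the universal families $\mathscr{F}\to \mathfrak{M}(\mathcal{X})\times \mathcal{X}$ and $\mathscr{E}\to \mathfrak{M}(X)\times X$ and then pushing forward by $R^1(\mu_1)_*$, I obtain an exact sequence
\begin{align*}
0 \to \mathscr{A}d \to R^1(\mu_1)_*{\rm Ad}(\mathscr{F}) \to R^1(\mu_1)_*{\rm Ad}(\mathscr{E}) \to 0,
\end{align*}
which I would identify, via deformation theory, with an Atiyah sequence of the natural forgetful map $\mathfrak{M}(\mathcal{X}) \to \mathfrak{M}(X)$ given by $F \mapsto \pi_*F$. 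This is what endows $T\mathfrak{M}(\mathcal{X})$ with a Lie algebroid structure over $\mathfrak{M}(\mathcal{X})$.

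Once the Lie algebroid structure is in place, I would apply \cite[Theorem 2.1.4]{Cour}, which guarantees that the total space of the dual of a Lie algebroid carries a natural Poisson structure. Using Grothendieck duality on the tame Deligne--Mumford curve $\mathcal{X}$ \cite{Nir08} to identify
\begin{align*}
T^*_{[F]}\mathfrak{M}(\mathcal{X}) \cong H^0(\mathcal{X}, {\rm Ad}(F)\otimes \omega_{\mathcal{X}}),
\end{align*}
together with the hypercohomology description $T_{(F,\phi)}\mathfrak{M}_H(\mathcal{X}) \cong \mathbb{H}^1({\rm Ad}(F) \xrightarrow{{\rm ad}(\phi)} {\rm Ad}(F)\otimes\omega_{\mathcal{X}})$, I would identify the total space of the cotangent algebroid with $\mathfrak{M}^0_H(\mathcal{X})$, transferring the Poisson structure. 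Finally, density of $\mathfrak{M}^0_H(\mathcal{X})$ in $\mathfrak{M}_H(\mathcal{X})$ propagates the Poisson bracket to the full moduli space, which via Theorem \ref{701} yields the Poisson structure on $\mathfrak{M}^{Rss}_H(X,\mathcal{G}_{\boldsymbol\theta})$.

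The main obstacle, which I expect will require the most care, is verifying that the derived-pushforward sequence genuinely inherits a Lie-algebroid bracket (as opposed to merely being a short exact sequence of vector bundles on $\mathfrak{M}(\mathcal{X})$), and that the anchor map is compatible with the geometric forgetful morphism $\mathfrak{M}(\mathcal{X}) \to \mathfrak{M}(X)$ in the stacky setting. A secondary subtlety is correctly applying Grothendieck duality on $\mathcal{X}$ with the parahoric type data $\boldsymbol\rho$, so that the cotangent identification with tame Higgs pairs of the prescribed type is canonical; this is where the equivariant refinement of the construction in \cite{KSZ4} must be checked point by point.
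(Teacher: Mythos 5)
Your proposal follows essentially the same route as the paper's own argument: reduction via Theorem \ref{701} to $(\Gamma,G)$-Higgs bundles on $[Y/\Gamma]$, the adjoint-bundle exact sequence $0 \to {\rm Ad}(F) \to {\rm Ad}(\pi^*\pi_*F) \to \prod_{y\in R}\mathfrak{n}_y\otimes\mathcal{O}_y \to 0$ pushed forward along the universal family to produce the Atiyah sequence and Lie algebroid, Courant's theorem on the dual, the Grothendieck-duality and hypercohomology identifications, and finally density of $\mathfrak{M}^0_H(\mathcal{X})$. The cautions you raise (the bracket on the derived pushforward, the anchor's compatibility with the forgetful map) are exactly the points the paper treats implicitly, and your placement of the Lie algebroid structure on the middle term $R^1(\mu_1)_*{\rm Ad}(\mathscr{F})\cong T\mathfrak{M}(\mathcal{X})$ is the correct reading.
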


\vspace{2mm}
\textbf{Acknowledgments}.
We are the most grateful to Philip Boalch for a series of constructive queries on a first draft of this article which have improved its content. We would also like to thank Pengfei Huang for his interest in this project and for useful discussions, as well as an anonymous referee for a careful reading of the manuscript and important remarks. G. Kydonakis is much obliged to the Max-Planck-Institut f\"{u}r Mathematik in Bonn for its hospitality and support during the production of this article. H. Sun is partially supported by National
Key R$\&$D Program of China (No. 2022YFA1006600) and NSFC (No. 12101243).
\vspace{2mm}

\bigskip
\noindent\small{\textsc{Alexander von Humboldt-Stiftung \& Universit\"{a}t Heidelberg}\\
	Mathematisches Institut Universit\"{a}t Heidelberg, Im Neuenheimer Feld 205, Heidelberg 69120, Germany}\\
\emph{E-mail address}:  \texttt{gkydonakis@mathi.uni-heidelberg.de}

\bigskip
\noindent\small{\textsc{Department of Mathematics, South China University of Technology}\\
	381 Wushan Rd, Tianhe Qu, Guangzhou, Guangdong, China}\\
\emph{E-mail address}:  \texttt{hsun71275@scut.edu.cn}

\bigskip
\noindent\small{\textsc{Department of Mathematics, University of Illinois at Urbana-Champaign}\\
	1409 W. Green St, Urbana, IL 61801, USA}\\
\emph{E-mail address}: \texttt{zhaolutian1994@gmail.com}

\end{document}